\newcommand{\wpi}{\widetilde{\pi}}
\newcommand{\GL}{\operatorname{GL}}
\newcommand{\ZZ}{\mathbb{Z}}
\newcommand{\FF}{\mathbb{F}}
\newcommand{\CC}{\mathbb{C}}
\newcommand{\QQ}{\mathbb{Q}}
\newcommand{\RR}{\mathbb{R}}
\newcommand{\GG}{\mathbb{G}}
\newcommand{\KK}{\mathbb{K}}
\newcommand{\NN}{\mathbb{N}}
\newcommand{\TT}{\mathbb{T}}
\DeclareSymbolFont{cyrletters}{OT2}{wncyr}{m}{n}
\DeclareMathSymbol{\Sha}{\mathalpha}{cyrletters}{"58}
\newtheorem{Theorem}{Theorem}
\newtheorem{Lemma}[Theorem]{Lemma}
\newtheorem{Proposition}[Theorem]{Proposition}
\newtheorem{Question}[Theorem]{Question}
\newtheorem{Definition}[Theorem]{Definition}
\newtheorem{Remark}[Theorem]{Remark}
\newtheorem{Conjecture}[Theorem]{Conjecture}
\date{March, 2017}
\title[On Schanuel conjecture]{On a variant of Schanuel conjecture for the Carlitz exponential}
\author{F. Pellarin}
\address{Federico Pellarin: Institut Camille Jordan, UMR 5208 Site de Saint-Etienne, 23 rue du Dr. P. Michelon, 42023 Saint-Etienne,
France}
\email{federico.pellarin@univ-st-etienne.fr}
\keywords{Multiple zeta values, Carlitz module, Schanuel's conjecture.}
\thanks{This project has received funding from the European Research Council (ERC)
under the European Union's Horizon 2020 research and innovation programme under
the Grant Agreement No 648132.}
\subjclass[2010]{11M38 (primary)} 
\begin{document}

\maketitle

\renewcommand{\abstractname}{Résumé}

\begin{abstract}

Nous introduisons et décrivons une variante de la conjecture de Schanuel dans le cadre de l'exponentielle de Carlitz sur des algèbres de Tate et de fonctions similaires. Un autre objectif de ce travail est de stimuler des possibles investigations en transcendance et indépendance algébrique en caractéristique non nulle.
\end{abstract}

\renewcommand{\abstractname}{Abstract}

\begin{abstract}

We introduce and discuss a variant of Schanuel conjecture in the framework of 
the Carlitz exponential function over Tate algebras and allied functions.
Another purpose of the present paper is to widen the horizons of possible 
investigations in transcendence and algebraic independence in positive characteristic.

\end{abstract}

\section{Introduction}

Schanuel's conjecture is an unproven statement predicting the behavior of 
the intersections with algebraic sub-varieties defined over $\QQ$ of a certain analytic 
subvariety of $\GG_a^n(\CC)\times\GG_m^n(\CC)$ of dimension $n$,
built on the graph of the classical exponential function. Somehow, the conjecture
expects that these intersections behave in the simplest possible way: 

\begin{Conjecture}[Schanuel]\label{originalconjectureofschanuel}
Let $u_1,\ldots,u_n$
be complex numbers which are linearly independent over $\QQ$. Then 
the transcendence degree over $\QQ$ of the subfield of $\CC$: 
$$\QQ(u_1,\ldots,u_n,e^{u_1},\ldots,e^{u_n})$$
is $\geq n$.
\end{Conjecture}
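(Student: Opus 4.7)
\medskip

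\noindent\textbf{Remark on the plan.} Schanuel's conjecture is a famous open problem; any honest ``plan'' can at best reproduce known special cases. With that caveat, here is the strategy I would attempt.

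\medskip

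The natural framework is the Gel'fond--Schneider--Baker method of auxiliary functions. Fix large parameters $L,N,T$ and assume for contradiction that $\mathrm{trdeg}_{\QQ} \QQ(u_i,e^{u_i}) < n$, so that $u_1,\dots,u_n, e^{u_1},\dots,e^{u_n}$ satisfy nontrivial algebraic relations with bounded denominators in some finitely generated $\ZZ$-algebra. I would build an auxiliary function
\[
F(z) = \sum_{0 \le i_1,\dots,i_n \le L} p_{i_1,\dots,i_n}\, \exp\!\bigl((i_1 u_1 + \cdots + i_n u_n)z\bigr),
\]
with integer coefficients $p_{i_1,\dots,i_n}$ produced by a Thue--Siegel lemma so that $F$ vanishes at $z = 0,1,\dots,N$ to order $T$. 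The $\QQ$-linear independence of the $u_i$ makes the relevant Vandermonde-type systems non-degenerate, so a small nontrivial solution exists. One would then combine a Schwarz-type upper bound for $|F|$ on a large disc with a Liouville lower bound on any nonvanishing value, use the hypothetical algebraic relations to extrapolate further vanishing, and iterate until a contradiction with a zero-estimate (Philippon--Wüstholz type) is reached.

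\medskip

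This template yields the \emph{Lindemann--Weierstrass theorem} when all $u_i$ are algebraic, \emph{Baker's theorem} when all $e^{u_i}$ are algebraic, and Ax's theorem in the differential-field analogue over $\CC(t)$. The main obstacle -- and the reason the conjecture is open -- is precisely the case where neither family is algebraic: without algebraicity of the arguments or of the values there is no fixed number field, no controlled ring of integers, and hence no Liouville-type lower bound and no denominator control for the coefficients of the hypothetical algebraic relations. Known zero estimates and multiplicity lemmas on commutative algebraic groups give nontrivial information only once such an arithmetic framework is in place.

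\medskip

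Consequently, I would not actually claim to carry this through; the plan collapses at the step where arithmetic size estimates should be invoked. Overcoming it appears to require either a genuinely new transcendence method or a structural reduction (for instance via a Zilber-type pseudo-exponentiation or a motivic argument) that algebraizes the problem. Given this, rather than attacking the archimedean conjecture head-on, the sensible move -- and, I anticipate, the move the author makes in the sequel -- is to transfer the question to the Carlitz exponential over Tate algebras, where the positive characteristic setting supplies additional algebraic tools (Frobenius, $\tau$-difference modules, $t$-motives) that are unavailable in the classical case.
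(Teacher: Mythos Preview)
The statement you were asked to prove is labelled \emph{Conjecture} in the paper, and indeed the paper does not prove it: Schanuel's conjecture is presented purely as motivation, with no attempted proof. There is therefore no ``paper's own proof'' to compare your proposal against.

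You clearly recognize this yourself --- your opening remark and your final paragraph both say outright that the problem is open and that your plan collapses at the Liouville/denominator step. That diagnosis is accurate, and your sketch of the auxiliary-function template (and of the special cases where it does succeed: Lindemann--Weierstrass, Baker, Ax) is a reasonable summary of the classical landscape. Your closing guess that the paper pivots to the Carlitz exponential is also correct.

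So there is no gap to name in your argument, because you do not actually claim to have one; and there is no comparison to make, because the paper offers no proof either. The only mismatch is one of framing: you were asked to compare with the paper's proof, but for this particular statement no such proof exists.
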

 This conjecture first appeared in print in Lang's book \cite{LAN}.
It is surprising to see such a syntactically simple statement governing an intricate constellation
of results of independence of classical mathematical constants. 
We mention that
the {\em Lindemann-Weierstrass Theorem} confirms Schanuel's conjecture in the 
case of $u_1,\ldots,u_n$ algebraic numbers, and can be seen as a particular case of
Siegel-Shidlowski Theorem for the values of Siegel $E$-functions
at algebraic numbers (see Beukers' \cite{BEU}). Baker's Theorem on linear forms of logarithms of algebraic
numbers, on the other hand, partially clarifies the case in which $e^{u_1},\ldots,e^{u_n}$
are algebraic numbers. Outside these two cases, $u_1,\ldots,u_n$ algebraic or
$e^{u_1},\ldots,e^{u_n}$ algebraic, fragmentary information is available. Among those, a corollary of Nesterenko's Theorem asserts that the numbers $\pi$ and $e^\pi$ are algebraically
independent over $\QQ$. We do not give precise references for these statements.
Instead, we refer to the survey of Waldschmidt \cite{WAL} and its detailed 
bibliography. The paper of Scanlon \cite{SCA} gives a nice introduction of related topics.

As a final, maybe less known and more recent consequence of Schanuel's conjecture, we also mention that
in \cite[Theorem 1.6]{MAR}, Marker deduces from the Schanuel conjecture
that every non-zero exponential polynomial $P(x,e^x)\in\QQ[x,e^x]$ such that $P(X,Y)$ depends on both $X$ and $Y$, has infinitely many algebraically independent
roots in $\CC$. In particular, one deduces, from Schanuel's conjecture, that any finite set of
distinct fixed points of the exponential are algebraically independent over $\QQ$. 
\medskip

In the present paper, we shall discuss variants of another conjecture, very similar to Schanuel's conjecture in aspect, but in the framework of function fields of positive characteristic. 
Although at first sight, our interest in this framework may seem rather artificial (why looking 
at an analogue conjecture while we already have so many  interesting open problems in the framework of the 
classical Schanuel conjecture?), we hope that
the reader, after having looked into the present paper, will be finally convinced that some new structures emerge in this framework which would have remained hidden in the classical setting,
more related to the theory of difference algebra and fields, perhaps giving a new view on the original Schanuel conjecture itself.

This text grew up from the tentative of the author to understand, in the viewpoint of 
the theory of transcendence and algebraic independence,  the meaning of the fact that certain $L$-values 
introduced in \cite{PEL} also behave as "Stark-Anderson units" and are thus sent to 
a polynomial by the Carlitz exponential function, extended over Tate algebras (see \cite{ANG&PEL1,APT,APT2}). Our main purpose is to propose a statement
which could play the role of a Schanuel conjecture in this setting; on the other hand,
we warn the reader that the present paper does not contain substantial mathematical proofs in this direction.

It is a pleasure to dedicate the present work to David Goss, that helped in a fundamental way to shape the intuition in these investigations and constantly encouraged the progression of these works with determinant energy, and to Michel Waldschmidt, that inspired the ``transcendental thinking" of the author, encouraged and accompanied him constantly and generously, from his first steps, as a number theorist. 

\subsection{Some background}
We set $A=\FF_q[\theta]$
and $K=\FF_q(\theta)$, where $\FF_q$ is the finite field with $q$ elements and $\theta$ is an indeterminate.
In all the following, we also denote by $p$ the characteristic of $\FF_q$ so that $q=p^e$ for an 
integer $e>0$.
We denote by $K_\infty$ the local field completion of $K$ at the infinity place of $K$; then
$K_\infty$ can be identified with the field of formal Laurent series $\FF_q((1/\theta))$, 
with valuation $v_\infty$ normalized by $v_\infty(\theta)=-1$.
Let $K_\infty^{ac}$ be an algebraic closure of $K_\infty$. Then we denote by 
$\CC_\infty$ the completion of $K_\infty^{ac}$ for the unique 
extension of $v_\infty$. Note that $\CC_\infty$ is a $K_\infty^{ac}$-vector space
of infinite dimension, and that $K_\infty^{ac}$ is of infinite dimension over $K_\infty$.
The field $\CC_\infty$ carries a unique extension 
$$\CC_\infty\xrightarrow{v_\infty}\QQ\cup\{\infty\}$$
of the valuation $v_\infty$. Sometimes, we will also use the associated norm 
$|\cdot|=q^{-v_\infty(\cdot)}$.
Just as the ring $\ZZ$ is discrete and co-compact in $\RR$,
we have that the $\FF_q$-algebra $A$ is discrete and co-compact in $K_\infty$; note also that the infinity place
is the only place of $K$ with this property.

\subsubsection{The Carlitz module}\label{thecarlitzmodule}
Let $\iota:A\rightarrow B$ be a commutative $A$-algebra. Then $B$ is equipped with the $\FF_q$-algebra endomorphism
$\tau:B\rightarrow B$ which sends $b\in B$ to $b^q\in B$ (the raising to the power $q$ is relative 
to the algebra structure of $B$ and is $\FF_q$-linear). Let $B\{\tau\}$ be the skew ring of 
finite sums $\sum_{i\geq 0}b_i\tau^i$ with $b_i\in B$ for all $i$, and product
defined by the rule $\tau b=\tau(b)\tau$ for $b\in B$; the ring $B\{\tau\}$
acts on $B$, as well as on any $B$-algebra, by {\em evaluation}. If $f=\sum_i\tau^i\in B\{\tau\}$, then for all $b\in B$,
we define the evaluation of $f$ at $b$ as: $$f(b):=\sum_if_i\tau^i(b)=\sum_if_ib^{q^i}.$$ 
The 
{\em Carlitz module $C(B)$ over $B$} is by definition the 
$A$-module whose underlying $\FF_q$-vector space is $B$, in which the multiplication 
by $\theta\in A$, sufficient to define the entire $A$-module structure, is given 
by the evaluation of the skew polynomial $\iota(\theta)+\tau$. We shall write
$C_a(b)$ for the multiplication of an element $b$ of $B$ by $a\in A$. For example, we have 
$$C_\theta(b)=\iota(\theta)b+\tau(b),\quad b\in B.$$
In particular, $\CC_\infty$ is an $A$-algebra with $\iota$ the identity map, and the above construction
gives rise to the $A$-module $C(\CC_\infty)$.

\subsubsection{The Carlitz exponential}\label{thecarlitzexponential}
The Carlitz exponential $$\exp_C:\CC_\infty\rightarrow\CC_\infty$$ allows to {\em analytically uniformize} $C(\CC_\infty)$. In what concerns the foundation theory of this function,
we are going to follow Goss' treatise \cite{GOS} which can be consulted by any interested reader.
We first remark, to define this function, that there is, available in the theory over $A$,
an analogue of the sequence of numbers $n!$, defined as follows:
$$d_n=\prod_a a,$$
where the product runs over the monic polynomials $a$ of $A$ of degree $n$
(this is, more properly, an analogue of the number $q^n!$). We set, for $z\in\CC_\infty$,
$$\exp_C(z):=\sum_{n\geq 0}d_n^{-1}\tau^n(z)\in\CC_\infty.$$
One verifies that $v_\infty(d_n)=nq^n$, from which we deduce that
$\exp_C$ is an entire (hence necessarily surjective by simple considerations of Newton polygons), $\FF_q$-linear map $\CC_\infty\rightarrow\CC_\infty$.
Since it is entire, its kernel $\Lambda$, an $\FF_q$-vector space, determines $\exp_C$
uniquely, and we have the convergent Weierstrass product expansion 
\begin{equation}\label{weierstrassproduct}
\exp_C(z)=z\prod_{\lambda\in\Lambda\setminus\{0\}}\left(1-\frac{z}{\lambda}\right),\quad z\in\CC_\infty.\end{equation}

This function $\exp_C$ in fact is the unique entire $\FF_q$-linear map $\CC_\infty\xrightarrow{F}\CC_\infty$ which satisfies
$F'=1$, and which induces an exact sequence of $A$-modules:
$$0\rightarrow\Lambda\rightarrow\CC_\infty\rightarrow C(\CC_\infty)\rightarrow0,$$
where $\Lambda\subset\CC_\infty$ is an $A$-sub-module of $\CC_\infty$.
The study of the Newton polygon of $\exp_C$ implies that $\Lambda$ is of rank one.
In particular, there exists an element $\wpi\in\CC_\infty^\times$ such that
$\Lambda=\widetilde{\pi}A$; it is defined up to multiplication by an element of $\FF_q^\times=\FF_q\setminus\{0\}$.
Note that for all $a\in A$ and $z\in\CC_\infty$,
$$C_a(\exp_C(z))=\exp_C(az),$$ hence providing an analytic isomorphism of $A$-modules
$$C(\CC_\infty)\cong\frac{\CC_\infty}{\widetilde{\pi}A}.$$
The element $\widetilde{\pi}\in\CC_\infty$ can we constructed explicitly by limit processes
in several ways. For instance, we recall from \cite{GOS} that $\widetilde{\pi}$ is the value in $\CC_\infty$ of a convergent infinite product
\begin{equation}\label{pi}
\widetilde{\pi}:=-(-\theta)^{\frac{q}{q-1}}\prod_{i=1}^\infty(1-\theta^{1-q^i})^{-1}\in (-\theta)^{\frac{1}{q-1}}K_\infty,
\end{equation}
uniquely defined up to the choice of a root $(-\theta)^{\frac{1}{q-1}}$. It has been proved in a variety of
ways (see \cite{PEL0} for a survey) that $\widetilde{\pi}$ is moreover transcendental over
$K$; the first proof of which was obtained by Wade in \cite{WAD}.

\section{A Carlitzian analogue of Schanuel's conjecture}

The following conjecture is due to Laurent Denis (see \cite{DEN}).

\begin{Conjecture}[Denis]\label{CarlitzianSchanuelConj}
Let $u_1,\ldots,u_n$ be elements of $\CC_\infty$ which are $A$-linearly independent.
Then the transcendence degree over $K$ of the sub-field of $\CC_\infty$ 
$$K(u_1,\ldots,u_n,\exp_C(u_1),\ldots,\exp_C(u_n))$$
is $\geq n$.
\end{Conjecture}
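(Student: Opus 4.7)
The plan is to recast the problem in the language of $t$-motives and to apply the Tannakian Galois theory of Papanikolas, following the same strategy that proved successful for Carlitz logarithms of algebraic points. For each $i$ one would associate to the pair $(u_i,\exp_C(u_i))$ a rigid analytically trivial pre-$t$-motive $M_i$ of rank two, realized as an extension of the trivial object by the Carlitz motive, and form $M:=M_1\oplus\cdots\oplus M_n$. By Papanikolas's theorem the transcendence degree over $K$ of the subfield of $\CC_\infty$ generated by the period entries of $M$ equals $\dim\Gamma_M$, where $\Gamma_M$ is the motivic Galois group of $M$; the target inequality then becomes $\dim\Gamma_M\geq n$.

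First I would build the relevant Frobenius modules using Anderson generating series. Replacing $u_i$ by a representative of small valuation in the coset $u_i+\wpi A$, I form
$$\omega_i(t):=\sum_{j\geq 0}\exp_C\!\left(\frac{u_i}{\theta^{j+1}}\right)t^j\in\TT,$$
which satisfies a $\tau$-difference equation tying $\omega_i$ to $u_i$ and $\exp_C(u_i)$ and whose evaluation at $t=\theta$ recovers $u_i$ up to sign. These series furnish the sought rigid analytic trivialization whenever the defining matrix has entries in $\ol{K}(t)$.

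Second, I would exploit the $A$-linear independence of the $u_i$ to force the unipotent radical of $\Gamma_M$ to have dimension at least $n$, reflecting the expectation that the classes $[M_1],\ldots,[M_n]$ span an $A$-submodule of $\mathrm{Ext}^1(\mathbf{1},C)$ of rank $n$. This strategy mirrors the two classical extremal regimes: when every $\exp_C(u_i)$ lies in $\ol{K}$, the conjecture specializes to Papanikolas's theorem on the algebraic independence of Carlitz logarithms of algebraic numbers; when every $u_i$ lies in $\ol{K}$, it falls within reach of the Anderson--Brownawell--Papanikolas criterion and Yu's sub-$t$-module theorem. Both endpoints are well understood, so one hopes that an interpolation argument bridges them.

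The decisive obstacle, which is precisely why the statement remains open, lies in the intermediate regime where neither $u_i$ nor $\exp_C(u_i)$ is algebraic over $K$. There the series $\omega_i$ do not belong to any $\tau$-difference module defined over $\ol{K}(t)$, so the Anderson--Papanikolas formalism cannot be applied directly: no $t$-motivic object is currently known that faithfully realizes a transcendental pair $(u_i,\exp_C(u_i))$, and consequently the Tannakian machinery has nothing to act upon. Overcoming this would appear to demand either the construction of a new category of ``exponential $t$-motives'' in positive characteristic, or a difference-algebraic Galois theory in the spirit of Hardouin--Singer applied to the Frobenius module generated by $\omega_1,\ldots,\omega_n$ over $\CC_\infty$ itself; an undertaking whose depth is commensurate with that of the original Schanuel conjecture in characteristic zero.
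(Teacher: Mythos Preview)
The statement you are asked to prove is labeled a \emph{Conjecture} in the paper, and the paper offers no proof of it; it is the Carlitzian analogue of Schanuel's conjecture and is, as you yourself note, open in general. So there is no ``paper's own proof'' to compare against. What the paper does do, immediately after stating the conjecture, is record the two known extremal cases: Thiery's result when all $u_i$ are algebraic over $K$, and Papanikolas's result when all $\exp_C(u_i)$ are algebraic over $K$ (with Denis's partial result in the same direction). Your proposal is entirely in line with this: you correctly isolate those two regimes as the tractable ones, you correctly attribute the second to Papanikolas's Tannakian machinery, and you correctly identify the obstruction in the mixed case---namely that when neither $u_i$ nor $\exp_C(u_i)$ is algebraic, the Anderson generating series $\omega_i$ does not live in a $\tau$-difference module over $\overline{K}(t)$, so there is no $t$-motive to feed into the Galois formalism.

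In short, your write-up is not a proof and does not pretend to be one; it is an accurate diagnosis of why the conjecture resists current methods, and it matches the paper's own assessment of the state of the art. There is nothing to correct, but also nothing here that closes the gap.
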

The case in which $u_1,\ldots,u_n$ are algebraic over $K$ has been solved by Thiery in \cite{THI}
and can be viewed as the analogue of Lindemann-Weierstrass theorem for the Carlitz exponential.
Note that the analogue for the Carlitz exponential of the Theorem of Hermite-Lindemann already 
appears in Wade \cite{WAD} (see also \cite[Theorem 2.5]{PEL0}). In the arithmetic theory of 
function fields in positive characteristic, what it is usually called the "analogue of Hermite-Lindemann Theorem" is a corollary of a very general transcendence result by Yu in \cite{YU}
(see also \cite[Theorem 2.2]{PEL0}).
Years later, Papanikolas solved, in \cite{PAP}, the case in which $\exp_C(u_1),\ldots,\exp_C(u_n)$ are algebraic over $K$ by using the "criterion of linear independence of Anderson-Brownawell-Papanikolas". Denis also obtained \cite{DEN1}, at the same time as Papanikolas, 
a partial result in this direction, by using a characteristic $p$ variant of "Mahler's method".
Additionally, in \cite[Corollaire 2 (a)]{DEN}, Denis proved that $\widetilde{\pi}$ and $\widetilde{e}:=\exp_C(1)$
are algebraically independent over $K$ if $q\geq3$ (see \cite{PEL} for an overview of these results).

\subsection{Digression: strengthening}
We mention, briefly, a natural way to reinforce Conjecture \ref{CarlitzianSchanuelConj},
but this has to be considered as a digression, since the nature of the statements we are 
primarily interested in, is different.
Denis, in \cite{DEN}, proposes a strengthening of Conjecture \ref{CarlitzianSchanuelConj}
in order to give an interpretation
of a multitude of results of algebraic independence he was obtaining, 
not only for special constants related to the Carlitz exponential, but also for their 
derivatives in the variable $\theta$. Although not central for our further investigations,
these strenghtenings tell us that in the present settings (over $\CC_\infty$), there are more apparent 
structures that there seems to be over $\CC$, and this allows us to 
propose more elaborate statements.
 
Denote by $\exp_C^{(i)}\in K[[z]]$ the $i$-th higher derivative of 
$\exp_C\in K[[z]]$ with respect to $\theta$ (note indeed that the formal series $\exp_C$
can also be viewed as a double formal series in $\FF_q[[\theta^{-1},z]]$); it is an $\FF_q$-linear entire function.
We recall that $p$ is the prime dividing $q$, that is, the characteristic of $\FF_q$.
\begin{Conjecture}[Denis]\label{DenisSchanuelConj}
Let $u_1,\ldots,u_n\in\CC_\infty$ be elements which are $A$-linearly independent.
Then the transcendence degree over $K$ of the sub-field of $\CC_\infty$ 
$$K(u_1,\ldots,u_n,\exp_C(u_1),\ldots,\exp_C(u_n),\ldots,\exp_C^{(p-1)}(u_1),\ldots,\exp_C^{(p-1)}(u_n))$$
is $\geq pn$.
\end{Conjecture}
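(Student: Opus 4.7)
The plan is to reinterpret the numbers $\exp_C^{(i)}(u_j)$, for $0\le i\le p-1$ and $1\le j\le n$, as periods of a suitable hyperderivative prolongation of a pre-$t$-motive, and then to invoke a Papanikolas-type transcendence criterion to reduce the degree estimate to a lower bound on the dimension of an algebraic Galois group. First I would attach to each $u_j$ a rank-two extension of the Carlitz motive by the trivial motive that realises $u_j$ as an off-diagonal period and $\exp_C(u_j)$ as a companion entry; direct-summing these, one obtains a pre-$t$-motive $\mathcal{M}$ which, by the $A$-linear independence of $u_1,\ldots,u_n$, admits no nontrivial proper sub-motive collapsing the $u_j$ to elements of $K$. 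The remaining periods $\exp_C^{(i)}(u_j)$ for $0<i\le p-1$ would then appear as entries in the period matrix of the $(p-1)$-st prolongation $J^{p-1}\mathcal{M}$, obtained by enlarging the base difference ring with the hyperderivatives $\partial_\theta^{[i]}$.

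Papanikolas's theorem would then identify the transcendence degree of the period field with $\dim\Gamma_{J^{p-1}\mathcal{M}}$, the dimension of the motivic Galois group, reducing the statement to
$$\dim\Gamma_{J^{p-1}\mathcal{M}}\ge pn.$$
The expected mechanism is that the prolongation enlarges the base Galois group by a unipotent factor of dimension $(p-1)\dim\Gamma_{\mathcal{M}}$: concretely, $\Gamma_{J^{p-1}\mathcal{M}}$ should fit into a short exact sequence
$$1\to U\to \Gamma_{J^{p-1}\mathcal{M}}\to \Gamma_{\mathcal{M}}\to 1$$
with $U$ a unipotent group attached to the $p-1$ new differential directions $\partial_\theta^{[i]}$, $0<i<p$. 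Combined with the lower bound $\dim\Gamma_{\mathcal{M}}\ge n$ predicted by Conjecture~\ref{CarlitzianSchanuelConj}, this yields the desired $pn$.

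The hard part is twofold. First, the base estimate $\dim\Gamma_{\mathcal{M}}\ge n$ is precisely the unprolonged Conjecture~\ref{CarlitzianSchanuelConj}, which is open outside the Thiery and Papanikolas special cases, so any serious approach to Conjecture~\ref{DenisSchanuelConj} must absorb that difficulty. Second, and more seriously, showing that the unipotent radical attains the maximal dimension $(p-1)\dim\Gamma_{\mathcal{M}}$ requires a hyperderivative refinement of the Anderson--Brownawell--Papanikolas linear independence criterion, ensuring that the iterated hyperderivatives of the period matrix remain linearly independent over $\overline{K}(t)$ modulo the Frobenius relations. No such criterion seems available in the literature for transcendental entries, and producing it — together with the jet-level non-degeneracy statement at $A$-linearly independent $u_j$ — is what I would regard as the principal obstacle.
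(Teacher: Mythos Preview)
The statement you are addressing is a \emph{conjecture}, not a theorem: the paper does not prove it, and explicitly presents it as an open strengthening of Conjecture~\ref{CarlitzianSchanuelConj} due to Denis. There is therefore no ``paper's own proof'' to compare against.

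Your proposal is not a proof either, and you essentially say so yourself. You correctly identify that the base case $\dim\Gamma_{\mathcal{M}}\ge n$ is exactly Conjecture~\ref{CarlitzianSchanuelConj}, which is open in general, and that the maximality of the unipotent radical in the prolongation would require a hyperderivative extension of the ABP criterion that does not currently exist. So what you have written is a plausible research outline, not an argument that establishes the statement. There is also a structural issue worth flagging: Papanikolas's theorem applies when the periods in question arise from a rigid analytically trivial pre-$t$-motive, which in practice forces the $u_j$ (or the $\exp_C(u_j)$) to be algebraic over $K$. For arbitrary $u_j\in\CC_\infty$ as in the conjecture, it is not clear that the extension-of-Carlitz construction you describe yields an object to which the Tannakian machinery applies at all; this is precisely why even the unprolonged Conjecture~\ref{CarlitzianSchanuelConj} remains out of reach outside the Thiery and Papanikolas cases.

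In short: the paper offers no proof because none is known, and your proposal honestly records the two principal obstructions rather than overcoming them.
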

In fact, the original statement in \cite{DEN} was for 
$u_1,\ldots,u_n\in K_\infty^{ac}$. However, we do not see any reason for which 
Conjecture \ref{DenisSchanuelConj} should fail for $u_i\in\CC_\infty\setminus K_\infty^{ac}$ for some $i$.
This conjecture obviously implies Conjecture \ref{CarlitzianSchanuelConj}.

\begin{Remark}{\em Note that $\widetilde{\pi}$ belongs to the separable closure $K_\infty^{\text{sep}}$ of $K_\infty$ in $\CC_\infty$. It is easy to see that the usual $\FF_q$-linear higher derivatives in $\theta$, defined by $D_n(\theta^m)=\binom{m}{n}\theta^{m-n}$, extend in an unique way to $\FF_q^{ac}$-linear higher derivatives 
$K_\infty^{\text{sep}}\rightarrow K_\infty^{\text{sep}}$ (with $\FF_q^{ac}$ denoting the algebraic closure of 
$\FF_q$ in $\CC_\infty$). P. Voutier informed the author \cite{VOU} of a work of D. Brownawell and A. van der Poorten, in which they proved that $\widetilde{\pi},D_1(\widetilde{\pi}),\ldots$ are algebraically independent over $K$, hence suggesting yet another way in which one could generalize Conjecture \ref{CarlitzianSchanuelConj}.}

\end{Remark}

\section{The basic settings for our operator-theoretic conjecture}

In the following, we will keep focusing on statements similar to 
Conjecture \ref{CarlitzianSchanuelConj}.
We want to formulate - this is the aim of the present paper - an {\em operator-theoretic conjecture} (see Conjecture \ref{conjecturefunctional} below) implying Conjecture \ref{CarlitzianSchanuelConj}.
 In order to do so, we 
first need to introduce certain difference Banach algebras.  In \S \ref{extensionexp}, we will extend 
the Carlitz exponential to these algebras. This will give rise to 
several notions of independence, generalizing algebraic independence over $K$ essential for our statement, and 
studied in \S \ref{dimensions}.
The statement of our conjecture will appear in \S \ref{refinedschanuel} and we will give
examples of application in \S \ref{someexamples}.

We shall start with the algebra $\CC_\infty[t_1,\ldots,t_s]$ for variables $t_1,\ldots,t_s$.
If $q=p^e$ for an integer $e>0$, then we set $\tau=\mu^e$ which is $\FF_q[t_1,\ldots,t_s]$-linear.
It is customary at this point, to take, when it is well defined, the completion of the above $\CC_\infty$-algebra
for the unique extension of the valuation $v_\infty$ which is trivial over $\FF_q[t_1,\ldots,t_s]$,
giving rise to a {\em Tate algebra}.
Observe that if $s=0$, then we just have the field $\CC_\infty$.
Let $$\CC_\infty[t_1,\ldots,t_s]\xrightarrow{v_\infty}\QQ\cup\{\infty\}$$ be the unique extension of the valuation $v_\infty$ over $\CC_\infty$ which is trivial
over $\FF_q[t_1,\ldots,t_s]$. The completion of $\CC_\infty[t_1,\ldots,t_s]$ with respect to this valuation
is the standard {\em $s$-dimensional Tate algebra} denoted by $\TT_s$ in all the following.
It is also called, in several papers, the {\em free $s$-dimensional affinoid algebra over $\CC_\infty$}.
It is well known that $\TT_s$ is a ring which is Noetherian, factorial, of Krull dimension $s$ (see \cite{BGR} for the general theory of these algebras).
It is isomorphic to the $\CC_\infty$-algebra of the formal series
$$f=\sum_{i_1,\ldots,i_s\geq 0}f_{i_1,\ldots,i_s}t_1^{i_1}\cdots t_s^{i_s}\in\CC_\infty[[t_1,\ldots,t_s]]$$
which satisfy $$\lim_{\min\{i_1,\ldots,i_s\}\rightarrow\infty}f_{i_1,\ldots,i_s}=0.$$ Thus, we have, for $f$ a formal series of $\TT_s$ expanded as above,
and non-zero, that
$$v_\infty(f)=\inf_{i_1,\ldots,i_s}v_\infty(f_{i_1,\ldots,i_s})=\min_{i_1,\ldots,i_s}v_\infty(f_{i_1,\ldots,i_s}).$$ We also set, for convenience, 
$$\|\cdot\|:=q^{-v_\infty(\cdot)}$$ and $0=\|0\|=q^{-\infty}$.
In all the following, we are going to view the algebras $\TT_s$ as one embedded in the other, so that
$$\CC_\infty=\TT_0\subset\TT_1\subset\cdots\subset\TT_s\subset\cdots.$$
It is easy to see that, for all $i>0$,
$$\TT_{i}=\left\{f=\sum_{j\geq 0}f_jt_i^j;f_j\in\TT_{i-1},v_\infty(f_j)\rightarrow\infty\right\},$$ where $v_\infty$ denotes here the Gauss valuation
extending the valuation of $\CC_\infty$ over $\TT_{i-1}$.

Then the map $\CC_\infty\xrightarrow{\mu}\CC_\infty$, $x\mapsto\mu(x)=x^p$ extends, uniquely, to a continuous, open $\FF_p[t_1,\ldots,t_s]$-linear 
automorphism of $\TT_s$ (for all $s$) such that, for all $f\in\TT_s$, we have that $$v_\infty(\mu(f))=pv_\infty(f).$$ It is also easy to prove that the subring $\TT_s^{\mu=1}=\{f\in\TT_s;\mu(f)=f\}$
is equal to $\FF_p[t_1,\ldots,t_s]$ (all these properties are proved in detail in \cite{APT}).

\subsection{Some complete difference fields}
\label{completedifferencefields} It is suitable to also have, at hand, complete difference fields containing $\TT_s$, not just difference algebras. 
Let $L$ be any commutative field. We denote by $L\langle\langle\QQ\rangle\rangle_\infty$ the set of formal series
$$\sum_{i\in\mathcal{I}}c_i\theta^{-i},\quad c_i\in L,$$
where $\mathcal{I}\subset\QQ$ is a {\em well-ordered} subset, that is,
any non-empty subset has a minimum element. Then with the natural valuation $v_\infty$ trivial over $L$
and such that $v_\infty(\theta)=-1$, it is well known that $L\langle\langle\QQ\rangle\rangle_\infty$ is a 
valued field which is complete and has residual field $L$ (it is in fact henselian). Moreover, it has no proper immediate extensions and, if $L$ is algebraically closed, then $L\langle\langle\QQ\rangle\rangle_\infty$
is algebraically closed. Since the residue field of $\CC_\infty$, $\FF_q^{ac}$, is algebraically 
closed, there is an isometric map of $\CC_\infty$ in 
$\FF_q^{ac}\langle\langle\QQ\rangle\rangle_\infty$, and therefore, if $L=\FF_q^{ac}(t_1,\ldots,t_s)$, then
$\CC_\infty(t_1,\ldots,t_s)$ maps isometrically in $$\mathbb{K}_s:=\FF_q^{ac}(t_1,\ldots,t_s)\langle\langle\QQ\rangle\rangle_\infty.$$
The complete field $\mathbb{K}_s$ has thus residue field $\FF_q^{ac}(t_1,\ldots,t_s)$
and valuation group $\QQ$.
In particular, there is an isometric map from the completion of the fraction field of $\TT_s$ to 
$\KK_s$.

The map $\mu:\KK_s\rightarrow \KK_s$ which sends an element $x=\sum_{i\in\mathcal{I}}c_i\theta^{-i}\in
\KK_s$ to the element $\mu(x)=\sum_{i\in\mathcal{I}}\mu(c_i)\theta^{-pi}$ extends the previously introduced $\FF_p[t_1,\ldots,t_s]$-linear automorphism of $\TT_s$ (seen as embedded in $\mathbb{K}_s$ as indicated above) is a continuous, open $\FF_q(t_1,\ldots,t_s)$-linear automorphism. Further, we have the identity
$$\mathbb{K}_s^{\mu=1}=\{f\in\mathbb{K}_s:\mu(f)=f\}=\FF_p(t_1,\ldots,t_s)$$
which follows directly from the definition of the field $\KK_s$.
\subsection{Extensions of the Carlitz exponential function}\label{extensionexp}

We briefly recall some basic facts about the Carlitz exponential over Tate algebras, from \cite{APT}. We can construct a continuous, open $\FF_q[t_1,\ldots,t_s]$-linear endomorphism
$$\TT_s\xrightarrow{\exp_C}\TT_s,$$ by setting
$$\exp_C(f)=\sum_{i\geq 0}d_i^{-1}\tau^i(f)=\sum_{i\geq 0}d_i^{-1}\mu^{ei}(f),\quad f\in\TT_s.$$
Note that the restriction of $\exp_C$ to the subring $\TT_0=\CC_\infty\cong\CC_\infty.1\subset\TT_s$
returns the Carlitz exponential function defined in \S \ref{thecarlitzexponential}.
Following the arguments of \S \ref{thecarlitzmodule}, we endow $\TT_s$
with the structure of an $A[t_1,\ldots,t_s]$-module $C(\TT_s)$ in the following way.
The underlying $\FF_q[t_1,\ldots,t_s]$-module is just that of $\TT_s$. Moreover, 
the multiplication by $\theta\in A$ is given by $C_\theta=\theta+\tau$ and all this produces
a structure of $A[t_1,\ldots,t_s]$-module in an unique way.
\subsubsection*{Example}
To give a concrete example of how this $A[t_1,\ldots,t_s]$-module structure works, let us suppose that $s=1$. In this case, we more simply write $t=t_1$
and $\TT=\TT_1$.
Let us consider $f=t_1-\theta=t-\theta$, which belongs to $A[t]\subset \TT$. Then $\tau(f)=t-\theta^q$ and $$C_\theta(f)=t(\theta+1)-(\theta^2+\theta^q).$$
The following result is proved in \cite{ANG&PEL2}.
\begin{Proposition}
The map $\exp_C$ induces an exact sequence of $A[t_1,\ldots,t_s]$-modules:
\begin{equation}\label{exactsequencetatemodule}
0\rightarrow\widetilde{\pi}A[t_1,\ldots,t_s]\rightarrow\TT_s\xrightarrow{\exp_C} C(\TT_s)\rightarrow0.\end{equation}
\end{Proposition}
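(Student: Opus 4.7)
The plan is to verify three separate claims: $A[t_1,\ldots,t_s]$-linearity of $\exp_C : \TT_s \to C(\TT_s)$, the computation of the kernel, and surjectivity. The key structural observation is that since $\tau=\mu^e$ acts $\FF_q[t_1,\ldots,t_s]$-linearly on $\TT_s$, the series $\exp_C=\sum_{i\ge 0}d_i^{-1}\tau^i$ acts \emph{coefficient-by-coefficient} in the variables $t_1,\ldots,t_s$: for $f=\sum_{\underline{i}}f_{\underline{i}}t^{\underline{i}}\in\TT_s$,
$$\exp_C(f)\;=\;\sum_{\underline{i}}\exp_C(f_{\underline{i}})\,t^{\underline{i}}.$$
Convergence in $\TT_s$ of this series follows because $v_\infty(f_{\underline i})\to\infty$, $\exp_C$ is entire, and a bound of the form $v_\infty(\exp_C(x))\ge v_\infty(x)-c$ (valid on the disc of a given radius) can be applied past the finitely many exceptional coefficients.

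For the first claim, $\FF_q[t_1,\ldots,t_s]$-linearity is built into the definition. For $A$-linearity one needs the functional equation $\exp_C(\theta f)=\theta\exp_C(f)+\exp_C(f)^q=C_\theta(\exp_C(f))$, which one checks either by the term-by-term rearrangement using $\theta d_{i}^{-1}+d_{i-1}^{-q}=d_i^{-1}\theta^{q^i}$ (the Carlitz recursion), or by reducing coefficient-wise to the classical identity over $\CC_\infty$ via the observation above.

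For the kernel, the coefficient-wise formula reduces $\exp_C(f)=0$ to $\exp_C(f_{\underline i})=0$ for every $\underline i$. By the classical result recalled in \S\ref{thecarlitzexponential}, this forces $f_{\underline i}\in\widetilde{\pi}A$, say $f_{\underline i}=\widetilde{\pi}g_{\underline i}$ with $g_{\underline i}\in A$. The Tate condition $f_{\underline i}\to 0$ then gives $g_{\underline i}\to 0$ in $K_\infty$, and since $A\subset K_\infty$ is discrete this forces $g_{\underline i}=0$ for all but finitely many $\underline i$. Hence $f\in\widetilde{\pi}A[t_1,\ldots,t_s]$, and the reverse inclusion is immediate.

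For surjectivity, given $g=\sum_{\underline i}g_{\underline i}t^{\underline i}\in\TT_s$, I construct a preimage coefficient-by-coefficient. Fix $N$ large enough that for $|\underline i|\ge N$, $v_\infty(g_{\underline i})$ lies inside the convergence disc of the Carlitz logarithm $\log_C=\sum_{i\ge 0}\ell_i^{-1}z^{q^i}$; for those indices set $f_{\underline i}:=\log_C(g_{\underline i})$, which satisfies $\exp_C(f_{\underline i})=g_{\underline i}$ and $v_\infty(f_{\underline i})=v_\infty(g_{\underline i})\to\infty$. For the finitely many remaining $\underline i$, pick any preimage $f_{\underline i}\in\CC_\infty$ using surjectivity of $\exp_C$ on $\CC_\infty$. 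Then $f=\sum_{\underline i}f_{\underline i}t^{\underline i}\in\TT_s$ because only finitely many coefficients have bounded-below valuation while the rest tend to $0$, and $\exp_C(f)=g$ by the coefficient-wise formula.

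The step I expect to require most attention is the surjectivity: one must resist choosing logarithms everywhere (which fails in the non-convergence range) and accept that the finitely many exceptional preimages may have very negative valuation; the crucial point is that finitely many such terms do not obstruct membership in $\TT_s$. Once that is absorbed, the proof is mechanical, and the whole argument pivots on the single observation that $\exp_C$ acts coefficient-wise on the $t_j$, itself a consequence of $\tau$ being $\FF_q[t_1,\ldots,t_s]$-linear.
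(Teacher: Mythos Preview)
Your proof is correct. The paper itself does not prove this proposition in the text---it simply cites \cite{ANG&PEL2}---so there is no in-paper proof to compare against directly. However, the paper does prove the more general Proposition~\ref{propogenerals} (the exact sequence over $\KK_s$), and the method there is genuinely different from yours, so a comparison is still instructive.

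Your argument exploits the explicit description of $\TT_s$ as power series in $t_1,\ldots,t_s$ with coefficients in $\CC_\infty$: because $\tau$ is $\FF_q[t_1,\ldots,t_s]$-linear, $\exp_C$ acts coefficient-by-coefficient, and everything reduces to the classical Carlitz theory over $\CC_\infty$ (kernel $=\widetilde{\pi}A$, surjectivity, the isometry $\log_C$ on the small disc). This is clean and elementary, and the treatment of surjectivity---using $\log_C$ on the cofinitely many small coefficients and arbitrary preimages on the finitely many large ones---is exactly the right move. The paper's proof of Proposition~\ref{propogenerals}, by contrast, works intrinsically in the complete field $\KK_s$ where no such coefficient decomposition in the $t_i$ is available: surjectivity comes from $\theta$-divisibility of $C(\KK_s)$ (solving $C_\theta(f)=g$ in generalized power series) and a continuous section, while the kernel is computed by dividing an element of the kernel by $\theta^N$ until it lands in the isometry disc $\{v_\infty>-\tfrac{q}{q-1}\}$, then identifying its image as a $C_{\theta^N}$-torsion point and using $\KK_s^{\tau=1}=\FF_q(t_1,\ldots,t_s)$. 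Your approach is more transparent for $\TT_s$; the paper's approach is what one needs once the coefficient-wise crutch disappears in $\KK_s$.
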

\begin{Remark}
{\em For $s=0$, we have seen that $\exp_C$  defines an entire function $\CC_\infty\rightarrow\CC_\infty$. In particular, $\exp_C$, as an entire function, is uniquely defined by the divisor of its zeroes and has the Weierstrass product expansion
(\ref{weierstrassproduct}) with $\Lambda=\widetilde{\pi}A$. However, for $s>0$, the extension of
$\exp_C$ to the Tate algebra $\TT_s$ that we have defined above, 
is no longer entire. For example, it has no Weierstrass product expansion over $\TT_s$ in contrast with the case $s=0$.}
\end{Remark}

More generally, we have the following.

\begin{Proposition}\label{propogenerals}
The Carlitz exponential $\exp_C$ gives rise to an exact sequence of
$\FF_q(t_1,\ldots,t_s)[\theta]$-modules:
$$0\rightarrow\widetilde{\pi}\FF_q(t_1,\ldots,t_s)[\theta]\rightarrow\mathbb{K}_s\xrightarrow{\exp_C} C(\mathbb{K}_s)\rightarrow0.$$
\end{Proposition}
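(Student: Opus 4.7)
The plan is to establish, in order: (i) $\exp_C$ extends to an $\FF_q(t_1,\ldots,t_s)[\theta]$-equivariant map $\KK_s \to C(\KK_s)$; (ii) $\wpi\FF_q(t_1,\ldots,t_s)[\theta] \subseteq \ker(\exp_C)$; (iii) the reverse kernel inclusion; and (iv) surjectivity. For (i), the series $\exp_C(f)=\sum_{n\geq 0}d_n^{-1}\tau^n(f)$ converges in $\KK_s$ because $v_\infty(d_n^{-1}\tau^n(f))=q^n(n+v_\infty(f))\to+\infty$, and $\KK_s$ is complete. Using $\tau(c)=c$ for $c\in\FF_q(t_1,\ldots,t_s)$, $\tau(\theta)=\theta^q$, and the recursion $d_n=(\theta^{q^n}-\theta)d_{n-1}^q$, one checks formally that $\exp_C(cf)=c\exp_C(f)$ and $\exp_C(\theta f)=C_\theta(\exp_C(f))$. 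Part (ii) then follows from $\exp_C(\wpi)=0$.

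The main technical step is (iii). Given a nonzero $f\in\ker(\exp_C)$, set $m:=v_\infty(f)$ and analyze the sequence $v_n:=q^n(n+m)$ of valuations of the summands of $\exp_C(f)$. Since $v_{n+1}-v_n=q^n(q-1)(n+m+q/(q-1))$ changes sign at $n=-m-q/(q-1)$, the minimum of $(v_n)$ is attained at a unique integer unless $-m-q/(q-1)\in\ZZ_{\geq 0}$. In the unique-minimum case, the leading term of $\exp_C(f)$ is nonzero, contradicting $\exp_C(f)=0$; hence $m=-q/(q-1)-N$ for some $N\in\ZZ_{\geq 0}$, and the minimum is attained at $n=N$ and $n=N+1$. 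Matching leading contributions at valuation $-q^{N+1}/(q-1)$ gives $\tau^N(c)+\tau^{N+1}(c)=0$, whence $\tau(c)=-c$ by injectivity of $\tau$ on $\FF_q^{ac}(t_1,\ldots,t_s)$. The leading coefficient $c_{\wpi}\in\FF_{q^2}$ of $\wpi$ satisfies the same relation, so $\beta:=c/c_{\wpi}$ is $\tau$-invariant and lies in the fixed field $\FF_q(t_1,\ldots,t_s)$ of $\tau$ on $\FF_q^{ac}(t_1,\ldots,t_s)$. Replacing $f$ by $f-\beta\wpi\theta^N$ yields a kernel element of strictly larger valuation; since nonzero kernel elements have valuations in the discrete set $\{-q/(q-1)-n:n\in\ZZ_{\geq 0}\}$, finitely many iterations exhibit $f$ in $\wpi\FF_q(t_1,\ldots,t_s)[\theta]$.

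For (iv), the case $v_\infty(y)>-q/(q-1)$ is handled by the Carlitz logarithm $\log_C(y)$, which converges in $\KK_s$ and inverts $\exp_C$ on this range. In the general case I would iteratively invert $C_\theta$: the equation $\theta z+\tau(z)=y$ is solvable in $\KK_s$ because $\tau$ is a bijection of $\FF_q^{ac}(t_1,\ldots,t_s)$ (Frobenius is surjective on the perfect field $\FF_q^{ac}$ and $\tau$ fixes the $t_i$), so one can solve for the leading coefficient $b=\tau^{-1}(a)$ where $a$ is the leading coefficient of $y$, and iterative correction produces $z\in\KK_s$ with $v_\infty(z)>v_\infty(y)$. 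After finitely many $C_\theta$-inversions $z_1,\ldots,z_k$ we reach $v_\infty(z_k)>-q/(q-1)$, and $x:=\theta^k\log_C(z_k)$ satisfies $\exp_C(x)=C_{\theta^k}(z_k)=y$. The main obstacle I expect is the leading-term bookkeeping in (iii): one must verify that in the sum defining $\exp_C(f)$, no contributions from $n\neq N,N+1$ or from sub-leading parts of $f$ spoil the critical cancellation at valuation $-q^{N+1}/(q-1)$; the passage from $\tau(c)=-c$ to $c/c_{\wpi}\in\FF_q(t_1,\ldots,t_s)$ is genuinely characteristic-$p$ in nature and relies on the identification of the $\tau$-fixed subfield of $\FF_q^{ac}(t_1,\ldots,t_s)$.
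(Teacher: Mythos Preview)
Your argument is correct. The worry you flag about leading-term bookkeeping in (iii) is not an obstacle: with $m=-q/(q-1)-N$ one has $v_{n+1}-v_n=q^n(q-1)(n-N)$, so $(v_n)$ strictly decreases for $n<N$, satisfies $v_N=v_{N+1}$, and strictly increases thereafter; hence only the $n=N,N+1$ terms contribute at the minimal valuation, and sub-leading parts of $f$ land strictly higher. The surjectivity step (iv) is essentially the paper's argument as well: both solve $C_\theta(z)=y$ in $\KK_s$ (the paper simply asserts this is ``easy to see, writing down explicit generalized series''), deduce $\theta$-divisibility of $C(\KK_s)$, and combine with $\log_C$ on $\{v_\infty>-q/(q-1)\}$ to produce a section.

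Where you genuinely diverge from the paper is in the kernel computation (iii). The paper avoids any leading-coefficient analysis. Given nonzero $\mu\in\ker(\exp_C)$, it picks $N$ with $v_\infty(\mu/\theta^N)>-q/(q-1)$, notes that $\lambda:=\exp_C(\mu/\theta^N)$ lies in $\ker(C_{\theta^N})$, and invokes the identification of that torsion module with $\bigoplus_{j=1}^N\FF_q(t_1,\ldots,t_s)\exp_C(\wpi/\theta^j)$ (which follows from $\KK_s^{\tau=1}=\FF_q(t_1,\ldots,t_s)$). Since $\exp_C$ is an $\FF_q(t_1,\ldots,t_s)$-linear isometry on the disk $\{v_\infty>-q/(q-1)\}$, hence injective there, one pulls back $\lambda=\sum_j c_j\exp_C(\wpi/\theta^j)$ to $\mu/\theta^N=\sum_j c_j\wpi/\theta^j$, giving $\mu\in\wpi\FF_q(t_1,\ldots,t_s)[\theta]$ in one step. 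Your Newton-polygon-and-subtract approach is more elementary and self-contained, requiring only the relation $\tau(c_{\wpi})=-c_{\wpi}$ and the identification of the $\tau$-fixed subfield of $\FF_q^{ac}(t_1,\ldots,t_s)$; the paper's route is shorter but imports the structure of Carlitz torsion as a black box.
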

Observe that the $\FF_q(t_1,\ldots,t_s)[\theta]$-module $C(\mathbb{K}_s)$ is well defined; the multiplication of
$f\in\mathbb{K}_s$
by $\theta$ for this module structure is $C_\theta(f)=\theta f+\tau(f)=\theta f+\mu^e(f)$, and all the operators are extended $\FF_q(t_1,\ldots,t_s)$-linearly. Restricted over the image of $\mathbb{T}_s$
in $\KK_s$, this exact sequence gives back the exact sequence (\ref{exactsequencetatemodule}).

\begin{proof}[Proof of Proposition \ref{propogenerals}]
Since $\KK_s$ is complete, $\exp_C$ is well defined. It is easy to see, writing down explicit 
generalized series of $\KK_s$, that if $g\in\KK_s$, then there exists a solution $f\in \KK_s$
of the equation $C_\theta(f)=\theta f+\tau(f)=g$. This means that $C(\KK_s)$ is $\theta$-divisible
and we can construct a continuous section of $\exp_C$; this implies that $\exp_C$ is surjective. Now, it is
immediate that $\FF_q(t_1,\ldots,t_s)[\theta]\widetilde{\pi}\subset\operatorname{Ker}(\exp_C)$.
To show the opposite inclusion, observe that, if $\mu\in\KK_s^\times$ is such that $\exp_C(\mu)=0$,
then, there exists $N\geq 0$ such that $v_\infty(\frac{\mu}{\theta^N})>-\frac{q}{q-1}$ and $\lambda=\exp_C(\frac{\mu}{\theta^N})\neq0$
is in the kernel of $C_{\theta^N}$. It is easy to show that this kernel is equal 
to $$\FF_q(t_1,\ldots,t_s)\exp_C\left(\frac{\widetilde{\pi}}{\theta}\right)\oplus\cdots\FF_q(t_1,\ldots,t_s)\exp_C\left(\frac{\widetilde{\pi}}{\theta^N}\right)$$ (recall that $\KK_s^{\tau=1}=\FF_q(t_1,\ldots,t_s)$).
Since $\exp_C$ induces an $\FF_q(t_1,\ldots,t_s)$-linear isometry of 
the disk $\{f\in\KK_s:v_\infty(f)>-\frac{q}{q-1}\}$, we can conclude.
\end{proof}

\subsection{A first conjectural statement}

We set $$K_s=\FF_p(\theta,t_1,\ldots,t_s).$$
This field will play the role of the field $K_0=\FF_p(\theta)$ (note that $K=K_0$ if and only if $q=p$; we hope that the fact that the notations $K$ and $K_s$ are so similar will not confuse the reader).
In the settings of \S \ref{extensionexp}, it is natural to state the following conjecture, which generalizes 
Denis' conjecture \ref{CarlitzianSchanuelConj}:

\begin{Conjecture}\label{improveddenisconj}
Let $u_1,\ldots,u_n$ be elements of $\mathbb{K}_s$ which are $A[t_1,\ldots,t_s]$-linearly independent.
Then the transcendence degree over $K_s$ of the sub-field of $\mathbb{K}_s$ 
$$K_s(u_1,\ldots,u_n,\exp_C(u_1),\ldots,\exp_C(u_n))$$
is $\geq n$.
\end{Conjecture}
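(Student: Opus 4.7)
The plan is to attempt a reduction of Conjecture \ref{improveddenisconj} to the classical Denis conjecture \ref{CarlitzianSchanuelConj} by generic specialization of the parameters $t_1,\ldots,t_s$. The intuition is that these parameters are in some sense \emph{formal}, and any algebraic relation over $K_s$ among the $u_i$ and the $\exp_C(u_i)$ ought to survive a generic specialization of the $t_i$ to elements of $\CC_\infty$, producing a relation of the type forbidden in the scalar case.

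First I would restrict to the more tractable situation where $u_1,\ldots,u_n$ all lie in the Tate subring $\TT_s\subset\KK_s$. Here, for any tuple $\boldsymbol\zeta=(\zeta_1,\ldots,\zeta_s)\in\CC_\infty^s$ with $v_\infty(\zeta_i)\geq 0$, there is a continuous evaluation morphism $\operatorname{ev}_{\boldsymbol\zeta}:\TT_s\to\CC_\infty$ that commutes with $\tau$ and, by the very series expansion of $\exp_C$ on $\TT_s$, with $\exp_C$ itself. Suppose for contradiction that
$$\operatorname{trdeg}_{K_s}K_s(u_1,\ldots,u_n,\exp_C(u_1),\ldots,\exp_C(u_n))<n.$$
Then there is a non-trivial polynomial relation among these $2n$ elements whose coefficients, after clearing denominators, lie in $A[t_1,\ldots,t_s]$. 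Evaluating at a sufficiently generic $\boldsymbol\zeta$ (say with coordinates algebraically independent over $K$ and avoiding a countable family of ``bad'' loci) yields a non-trivial polynomial relation over $K$ among $\operatorname{ev}_{\boldsymbol\zeta}(u_i)$ and $\exp_C(\operatorname{ev}_{\boldsymbol\zeta}(u_i))$, which would contradict Conjecture \ref{CarlitzianSchanuelConj} \emph{provided} the specialized images remain $A$-linearly independent in $\CC_\infty$.

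The preservation of $A$-linear independence under specialization is the first genuine subtlety. A crude choice of $\boldsymbol\zeta$ will not suffice, because the hypothesis only forbids $A[t_1,\ldots,t_s]$-linear relations; one has to show that no \emph{specialized} $A$-linear relation emerges for a generic tuple. The cleanest route is to exploit the difference-algebraic structure: $\KK_s$ is a difference field over its subfield of invariants $\FF_p(t_1,\ldots,t_s)$ under $\mu$, and a hypothetical failure would propagate through $\mu$ in a way that should be controlled by this invariance. This is where an operator-theoretic reformulation, in the spirit of the Anderson--Brownawell--Papanikolas criterion used in \cite{PAP}, becomes natural: translating $A[t_1,\ldots,t_s]$-linear independence into an independence assertion for a suitable twisted action, and then invoking a difference-Galois transcendence criterion, should assemble the required bound.

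The main obstacle is that Conjecture \ref{CarlitzianSchanuelConj} is itself open, so the reduction is at best conditional, just as any reduction to Schanuel's classical conjecture would be. A second, arguably more serious, obstruction arises when one tries to pass from $\TT_s$ to all of $\KK_s$: elements of $\KK_s$ are generalized Hahn series with $\QQ$-indexed support in $\theta$, for which pointwise evaluation in the $t_i$ is not defined in any naive sense, so the specialization strategy collapses. Bridging this gap appears to require a genuinely new ingredient -- an operator-theoretic principle internal to $\KK_s$, of precisely the kind the author is about to propose in the next section.
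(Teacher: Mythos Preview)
The statement you are addressing is presented in the paper as a \emph{conjecture}, not a theorem; the paper offers no proof and in fact immediately argues that the statement is ``not so interesting'' because it is blind to the $\tau$-difference relations such as $\tau(\omega)=(t-\theta)\omega$. So there is no proof in the paper to compare against, and your proposal is, as you yourself concede, at best a conditional reduction to another open conjecture.

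Beyond the obstacles you already flag (Conjecture~\ref{CarlitzianSchanuelConj} being open; no evaluation map on $\KK_s\setminus\TT_s$), your specialization sketch has a further gap. You propose to take $\boldsymbol\zeta$ with coordinates \emph{algebraically independent over $K$}. But then the coefficients of the specialized relation lie in $K(\zeta_1,\ldots,\zeta_s)$, a field of transcendence degree $s$ over $K$, and the inequality you obtain is
\[
\operatorname{trdeg}_{K(\boldsymbol\zeta)}K(\boldsymbol\zeta)\bigl(u_i(\boldsymbol\zeta),\exp_C(u_i(\boldsymbol\zeta))\bigr)\leq n-1,
\]
which does \emph{not} contradict Conjecture~\ref{CarlitzianSchanuelConj}; the latter bounds transcendence degree over $K$, not over $K(\boldsymbol\zeta)$. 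To force the coefficients into $K^{ac}$ you would need $\zeta_i\in K^{ac}\cap D$. That set, however, is countable, and your ``avoid a countable family of bad loci'' argument for preserving $A$-linear independence then collapses: you are trying to choose a point of a countable set outside a countable union of analytic hypersurfaces of $D^s$, and nothing guarantees this intersection is non-empty. For instance, with $u_1=f\in\TT_1$ and $u_2=1$, the bad locus is $\{f(\zeta)\in K\}$, a countable union of zero-sets of elements of $\TT_1$, and there is no a priori reason it misses $K^{ac}\cap D$.

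In short: the paper does not prove this conjecture, your reduction is conditional on an open problem, and even as a conditional reduction the specialization step needs a genuinely different mechanism to control simultaneously the base field of the relation and the $A$-linear independence of the specialized arguments.
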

We now explain why the above statement does not look so interesting; we set $s=1$ for commodity
(we write thus $\TT=\TT_1,t=t_1$ etc.).
The 
{\em Anderson-Thakur function} can be defined as the
element $$\omega=\exp_C\left(\frac{\widetilde{\pi}}{\theta-t}\right)\in\TT^\times.$$
It is the generator of the $\FF_q[t]$-module $\operatorname{Ker}(C_{\theta-t})\cap\TT$,
free of rank one (see \cite{ANG&PEL2}). 
Since 
\begin{equation}\label{differenceomega}
\tau(\omega)=(t-\theta)\omega
\end{equation} (this is equivalent to saying that $\omega\in \operatorname{Ker}(C_{\theta-t})$), we also deduce the next Proposition in quite an elementary way (see \cite{ANG&PEL2} for more details). 

\begin{Proposition}\label{omegagamma} The following properties hold:
\begin{enumerate}
\item We have the product expansion $$\omega=(-\theta)^{\frac{1}{q-1}}\prod_{i\geq 0}\left(1-\frac{t}{\theta^{q^i}}\right)^{-1},$$ convergent in $\TT^\times$.
\item $\omega$, identified with a function of the variable $t\in\CC_\infty$ with $\|t\|\leq 1$, extends to a meromorphic function over $\CC_\infty$ and has,
as unique singularities, simple poles at the points $t=\theta,\theta^q,\theta^{q^2},\ldots$. The residues 
can be explicitly computed. In particular, we have $\operatorname{Res}_{t=\theta}(\omega)=-\widetilde{\pi}$.
\item The function $1/\omega$ extends to an entire function $\CC_\infty\rightarrow\CC_\infty$
with unique zeros located at the poles of $\omega$. 
\end{enumerate}
\end{Proposition}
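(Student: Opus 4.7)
The plan is to reduce all three parts to the functional equation (\ref{differenceomega}), namely $\tau(\omega) = (t-\theta)\omega$, by producing an explicit product solution and then pinning down the scalar so that the product is genuinely $\omega$. Once the product formula is in hand, parts (2) and (3) fall out immediately by reading off its analytic features on $\CC_\infty$.

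To construct the candidate, set
$$g := \prod_{i \geq 0} \left(1 - \frac{t}{\theta^{q^i}}\right)^{-1}.$$
Since $v_\infty(t/\theta^{q^i}) = q^i \to \infty$ in $\TT$, each factor is a unit of $\TT$ and the product converges in $\TT^\times$. A direct computation shows $\tau(g) = (1 - t/\theta)\, g$. Choosing any $c \in \CC_\infty^\times$ with $c^{q-1} = -\theta$, the product $f := c \cdot g$ then satisfies $\tau(f) = (t-\theta) f$, so $f \in \operatorname{Ker}(C_{\theta - t}) \cap \TT$. As recalled in the excerpt, this kernel is free of rank one over $\FF_q[t]$ with generator $\omega$; write $f = p(t) \, \omega$ with $p \in \FF_q[t]$. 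Since $f \in \TT^\times$ has no zeros in the closed unit disk of $\CC_\infty$, while any nonconstant $p \in \FF_q[t]$ would have a root $\alpha \in \overline{\FF_q}$ of norm $1$ at which $f(\alpha) = p(\alpha)\omega(\alpha) = 0$, we conclude $p \in \FF_q^\times$. The residual ambiguity in the choice of $(q-1)$-th root $c$ is then pinned down by taking $c = (-\theta)^{1/(q-1)}$ compatibly with the normalization (\ref{pi}) of $\widetilde{\pi}$, equivalently by comparing $\omega(0) = \exp_C(\widetilde{\pi}/\theta)$ and $(c \cdot g)(0) = c$, both of which lie in $\operatorname{Ker}(C_\theta) \setminus \{0\} = \{x : x^{q-1} = -\theta\}$. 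This gives (1).

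For (2) and (3) the product formula is the entire input. On any disk $\{t \in \CC_\infty : \|t\| \leq q^R\}$, only the finitely many factors with $q^i \leq R$ produce poles inside the disk, while the tail converges uniformly on the disk to a nonvanishing holomorphic function since $\|t/\theta^{q^i}\| \leq q^{R - q^i} \to 0$. Consequently $\omega$ extends meromorphically to all of $\CC_\infty$ with simple poles exactly at $t = \theta^{q^i}$, $i \geq 0$. Isolating the $i = 0$ factor gives
$$\omega = (-\theta)^{\frac{1}{q-1}} \cdot \frac{1}{1 - t/\theta} \cdot \prod_{i \geq 1}\left(1 - \frac{t}{\theta^{q^i}}\right)^{-1},$$
and since $\operatorname{Res}_{t=\theta}\bigl((1 - t/\theta)^{-1}\bigr) = -\theta$, evaluating the tail at $t = \theta$ yields
$$\operatorname{Res}_{t=\theta}(\omega) = -\theta \cdot (-\theta)^{\frac{1}{q-1}} \prod_{i \geq 1}\left(1 - \theta^{1-q^i}\right)^{-1} = (-\theta)^{\frac{q}{q-1}} \prod_{i \geq 1}\left(1 - \theta^{1-q^i}\right)^{-1} = -\widetilde{\pi},$$
the last equality being (\ref{pi}). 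Taking reciprocals produces $1/\omega = (-\theta)^{-1/(q-1)} \prod_{i \geq 0}(1 - t/\theta^{q^i})$, a convergent infinite product of polynomials whose uniform convergence on each compact of $\CC_\infty$ exhibits it as an entire function with zeros precisely at $t = \theta^{q^i}$, $i \geq 0$.

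I expect the main friction to be the bookkeeping of $(q-1)$-th roots of $-\theta$ in part (1): both $\omega$ and $\widetilde{\pi}$ are defined only up to $\FF_q^\times$, and establishing that the scalar $c$ in front of the product agrees with the $(-\theta)^{1/(q-1)}$ chosen in (\ref{pi}) requires a careful specialization at $t = 0$. Once this compatibility is made precise, everything else reduces to standard manipulation of convergent infinite products in $\TT$ and on annuli of $\CC_\infty$.
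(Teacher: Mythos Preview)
Your argument is correct and follows precisely the route the paper indicates: the paper does not actually prove this proposition but simply says that it is deduced ``in quite an elementary way'' from the functional equation $\tau(\omega)=(t-\theta)\omega$ and refers to \cite{ANG&PEL2} for details, and this is exactly what you do. Your handling of the normalization constant via evaluation at $t=0$ and the residue computation matching (\ref{pi}) are the right bookkeeping steps, and you are right to flag the $(q-1)$-th-root ambiguity as the only delicate point.
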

Since $\omega$ has infinitely many poles, it is transcendental over $K(t)$ and it is easy to 
see that $\widetilde{\pi}$, $\omega$ are algebraically independent over $K(t)$, but the conjecture 
\ref{improveddenisconj} in the case $n=1$ and $s=1$ only implies that 
one among the two elements $\widetilde{\pi}$ and $\omega$ is transcendental.

In fact, $\omega$ is transcendental over $K(t)$ but is also "$\tau$-algebraic" in the following sense: it satisfies the difference 
equation (\ref{differenceomega}) which is a kind of algebraic relation involving $\tau$, even though 
$\tau$ is not an algebraic morphism. What is missing to Conjecture \ref{improveddenisconj}
is the sensitiveness to such difference equations, and this is what we want to explore now.

\subsection{A suitable notion of independence}\label{dimensions}

Our operator-theoretic
generalization of Denis-Schanuel's conjecture \ref{DenisSchanuelConj} and of Conjecture \ref{improveddenisconj} that will take place in the 
valued field $\KK_s$. Before presenting it, we need to describe the relevant relations. 

We recall Conjectures \ref{originalconjectureofschanuel}, \ref{DenisSchanuelConj} and 
\ref{improveddenisconj}. In the simplest portrait, we have a complete {\em environment field} ($\CC$ in the 
first above mentioned conjecture, $\CC_\infty$ in the second conjecture and $\mathbb{K}_s$ in the third) a {\em countable base subfield of coefficients} ($\QQ$ in the 
first conjecture, $K$ in the second conjecture and $K_s$ in the third). Moreover, we have notions of {\em linear relation}
(over $\ZZ$, $A$ and $A[t_1,\ldots,t_s]$) and of  {\em independence} (over the fraction fields of these rings). 
Our statement will take place in the environment field $\KK_s$, concerns
the extension of the Carlitz exponential discussed in the previous section, and will vaguely sound as follows, with a (momentarily) non-specified countable subfield of coefficients $L_s\subset \KK_s$
and a non-specified notion of dependence over $L_s$:

\medskip

\noindent{\bf Conjecture} (Prototype)
{\em Let $f_1,\ldots,f_n$ be elements of $\KK_s$ which are $\FF_q(t_1,\ldots,t_s)[\theta]$-linearly independent.
Then, among the $2n$ elements $f_1,\ldots,f_n$ and $\exp_C(f_1)$, $\ldots,$ $\exp_C(f_n)$,
$n$ are ``$\mu$-independent" over $L_s$.}

\medskip

Of course, since our conjecture is taylored to imply Conjectures \ref{DenisSchanuelConj} and 
\ref{improveddenisconj},
we want that our relations over $L_s$ to extend algebraic dependence over $K=\FF_q(\theta)$, which is equivalent to
algebraic dependence over $\FF_p(\theta)$.
Now, we are going to suppose that $s\geq 1$.  In our $\mu$-difference settings, the difference field
extension $(\KK_s,\mu)$ of the difference field $(K_s,\mu)$,
there are several noticeable classical notions of independence over $K_s$ but we will see soon that they must be refined for our purposes.

\subsubsection{Transformal independence}
The first notion we want to discuss is that of {\em transformal independence} over $K_s$ (read Levin, \cite[\S 2.2]{LEV}); as announced, it will turn out to be too coarse. 

Let $L$ be
a $\mu$-difference subfield of $\KK_s$ containing $K_s$. 

\begin{Definition}{\em 
Elements 
$f_1,\ldots,f_n\in\KK_s$ are {\em transformally $\mu$-independent} over $L$ if the elements 
$$f_1,\mu(f_1),\ldots, f_2,\mu(f_2),\ldots,\ldots,f_n,\mu(f_n),\ldots$$ are algebraically independent
over $L$; the notions of {\em transformal $\mu$-dependence, transformal $\mu$-algebraicity} and {\em transformal $\mu$-transcendence} can be defined accordingly (see \cite{LEV}). 
This also leads to a notion of {\em transformal $\mu$-independence degree}. Let $f_1,\ldots,f_n$ be elements of $\KK_s$ and let us denote by:
$$\mathcal{L}=L(f_1,\ldots,f_n)_\mu$$ the smallest $\mu$-difference subfield of $\KK_s$
containing $L$ and $f_1,\ldots,f_n$. Then the transformal $\mu$-independence degree of $\mathcal{L}$ over $L$
$$\operatorname{transf}\deg_L(\mathcal{L})$$ is by definition the minimal cardinality
of a transformally $\mu$-independent subset of $\mathcal{L}$ over $L$. If 
$f_1,\ldots,f_n$ are transformally $\mu$-independent over $L$, then 
$$\operatorname{transf}\deg_L(\mathcal{L})=n.$$}
\end{Definition}

The notions above are not tailored for the statement we want to produce.
To illustrate this, observe that, since $c^p=\mu(c)$ for all $c\in\CC_\infty$, every element of $\CC_\infty$
is transformally $\mu$-algebraic over $K$, independently of the fact that it is algebraic over $K$ or not.
Worse, for all $c\in\CC_\infty$ and for all $f\in K_s$, the element 
$g=cf\in\CC_\infty(t_1,\ldots,t_s)$ is transformally $\mu$-algebraic. Indeed, 
$\mu(f)g^p=f^p\mu(g)$. We deduce from \cite[Theorem 4.1.2]{LEV} that every element of $\CC_\infty(t_1,\ldots,t_s)$
is transformally $\mu$-algebraic, no matter if the coefficients of the rational fractions 
are algebraic or not.

Note also that in particular, the subfield generated by the elements 
$x\in\KK_s$ which are trasformally $\mu$-algebraic over $K_s$ is not countable.
The generalization of Conjecture \ref{CarlitzianSchanuelConj}
(see Conjecture \ref{conjecturefunctional}) that we have in mind is of arithmetic nature, and is similar to 
\cite[Hypothesis ($\Sigma$) p. 252]{AX}, which is equivalent to the Schanuel conjecture
itself (see ibid., Theorem 2). There, a countable base subfield is required
(the field $\QQ$). It seems to us that for statements of arithmetic nature such as Schanuel's Conjecture (\footnote{Observe that we
are not considering here, statements such as \cite[(SP)]{AX}.}), it is natural to have a
countable base field; this means that the notion of transformal $\mu$-independence
is not suitable for our framework. We now make an attempt to give a 
more refined notion of relation.

\subsubsection{Analytically critical and regular $\mu$-polynomials}
We choose a finite set of symbols $$\underline{X}=(X_1,\ldots,X_n)$$ and we introduce infinitely many indeterminates 
$$\mu^0(X_i),\mu^1(X_i),\mu^2(X_i),\ldots,\quad i=1,\ldots,n.$$ To simplify the notation and to make it more natural, 
we shall write $X_i=\mu^0(X_i)$ and $\mu(X_i)=\mu^1(X_i)$. Let $L$ be a $\mu$-difference
field.
We consider the ring in infinitely many indeterminates
$$L[\underline{X}]_\mu=L[X_i,\mu(X_i),\mu^2(X_i),\ldots:i=1,\ldots,n],$$ that we turn into a
$\mu$-difference ring by setting 
$$\mu(\mu^j(X_i))=\mu^{j+1}(X_i).$$ The elements of $L[\underline{X}]_\mu$ are called the {\em $\mu$-polynomials in the 
symbols $\underline{X}$ with coefficients in $L$}. Any element $P\in L[\underline{X}]_\mu$
can be written in an unique way as
\begin{equation}\label{mupoly}
P(\underline{X})=\sum_{\underline{i}\in\NN^{(k+1)n}}c_{\underline{i}}X_1^{i_{1,0}}\mu(X_1)^{i_{1,1}}\cdots\mu^k(X_1)^{i_{1,k}}\cdots X_n^{i_{n,0}}\mu(X_n)^{i_{n,1}}\cdots\mu^k(X_n)^{i_{n,k}}, 
c_{\underline{i}}\in L,
\end{equation}
and the smallest $k\in\NN$ for which the expansion (\ref{mupoly}) holds is called the {\em depth} of $P$. 

Let $P$ be in $L[\underline{X}]_\mu$ as in (\ref{mupoly})
and let $\underline{f}=(f_1,\ldots,f_n)$ be an element of $\mathbb{K}_s^n$. Then the {\em evaluation of $P$ at $\underline{f}$}
is the element of $\mathbb{K}_s$:
$$P(\underline{f}):=\sum_{\underline{i}\in\NN^{(k+1)n}}c_{\underline{i}}f_1^{i_{1,0}}\mu(f_1)^{i_{1,1}}\cdots\mu^k(f_1)^{i_{1,k}}\cdots f_n^{i_{n,0}}\mu(f_n)^{i_{n,1}}\cdots\mu^k(f_n)^{i_{n,k}}.$$ If $P(\underline{f})=0$, we say that $f$ is a {\em zero} or a {\em root} of $P$. We choose 
a $\mu$-difference subfield $L'$ of $\KK_s$ containing the coefficients of a $\mu$-polynomial $P$.
The {\em $\mu$-variety of zeroes of $P$ in $L'$} is the subset of 
$\underline{f}\in L'{}^n$ such that $P(\underline{f})=0$. It is 
denoted 
by $Z_{L'}(P)$ or more simply, by $Z(P)$ when $L'=\mathbb{K}_s$. We can also consider 
varieties of simultaneous zeroes of a collection of $\mu$-polynomials.
We observe that $Z(P)=Z(\mu(P))$ for any $\mu$-polynomial.
Let $P$ be a $\mu$-polynomial. We denote by $P^\mu$ the $\mu$-polynomial 
obtained by applying $\mu$ to all the coefficients of $P$. Then, it is easy to see that $Z(P^\mu)=\mu(Z(P))$.


We set $D:=D(0,1)=\{z\in \CC_\infty:v_\infty(z)\geq0\}$. Let $n\geq 1$ be an integer.
We set $\underline{X}=(X_1,\ldots,X_n)$ and $\underline{z}=(z_1,\ldots,z_n)$.
We denote by $\TT_{\underline{z}}$ the Tate algebra 
$\TT_{\underline{z}}=\widehat{\CC_\infty[z_1,\ldots, z_n]},$
completion of the polynomial ring $\CC_\infty[z_1,\ldots, z_n]$ for the Gauss norm.

\begin{Definition}
{\em Let $P$ be a non-zero $\mu$-polynomial of $\KK_s[\underline{X}]_\mu$. 
We say that it is {\em analytically critical} if there exists a map
$$D^n\xrightarrow{\underline{F}} Z(P)\subset\KK_s^n,$$
$\underline{z}\in D^n\mapsto\underline{F}(\underline{z})=(F_1(\underline{z}),\ldots,F_n(\underline{z}))$
which is a {\em rigid immersion} in the following sense. The coordinate functions $\underline{F}=(F_1,\ldots,F_n)$ are series $$F_i=F_i(z_1,\ldots,z_n)=\sum_{\underline{k}=(k_1,\ldots,k_n)\atop
 k_i\geq 0, \forall i=1,\ldots,n}F_{i,\underline{k}}z_1^{k_1}\cdots z_n^{k_n}\in\widehat{\KK_s\otimes_{\CC_\infty}\TT_n},$$
(such that, for all $i,\underline{k}$, $F_{i,\underline{k}}\in\KK_s$ and $\|F_{i,\underline{k}}\|\rightarrow0$
as $|\underline{k}|=\sum_jk_j\rightarrow\infty$), and moreover, 
the Jacobian
$$J_{\underline{z}}(\underline{F})=\begin{pmatrix}
\frac{\partial F_1}{\partial z_1} & \ldots & \frac{\partial F_n}{\partial z_1} \\ 
\vdots &  & \vdots \\ 
\frac{\partial F_1}{\partial z_n} & \cdots & \frac{\partial F_n}{\partial z_n}
\end{pmatrix}$$
has everywhere maximal rank over $D^n$. A $\mu$-polynomial which is not analytically critical 
is said {\em analytically regular}.}
\end{Definition}
Equivalently, $P$ is analytically critical if a point $\underline{F}$ with elements of 
$\widehat{\mathbb{K}_s\otimes_{\CC_\infty}\TT_{\underline{z}}}$ as entries, satisfying the above conditions, belongs to
$Z_{\widehat{\mathbb{K}_s\otimes_{\CC_\infty}\TT_{\underline{z}}}}(P)$. From this, we
deduce that the set of analytically critical $\mu$-polynomials of $\mathbb{K}_s[\underline{X}]_\mu$
is a non-zero prime ideal.
\subsubsection{Examples}
Any polynomial $P\in\KK_s[\underline{X}]_\mu$
which is multi-homogeneous for the unique multi-graduation which assigns to $\mu^i(X_j)$
the degree $p^i$, is analytically critical as soon as it has 
a root $\underline{f}\in(\KK_s\setminus\{0\})^n$. Indeed, if $\underline{f}=(f_1,\ldots,f_n)\in\KK_s^n$
is a root, then also $(c_1f_1,\ldots,c_nf_n)$ for all $c_1,\ldots,c_n\in\CC_\infty$
is a root, due to the fact that $c^p=\mu(c)$ if $c\in\CC_\infty$, so that
there is a rigid immersion map $D^n\rightarrow Z(P)$.
In particular, 
the $\mu$-polynomial in one variable $\mu(X)-X^p$ is analytically critical.
Note that there also are analytically critical $\mu$-polynomials which are not homogeneous for the 
above graduations.



\subsubsection{Examples of analytically regular $\mu$-polynomials}

\begin{Lemma}\label{noncritical}
Any non-zero polynomial $P\in\KK_s[\underline{X}]$ is analytically regular.
Moreover, a $\mu$-polynomial $P\in\KK_s[\underline{X}]_\mu$ is analytically critical if and only if $\mu(P)$
is analytically critical. Similarly, $P$ is anaytically critical if and only if $P^\mu$ is analytically critical.
\end{Lemma}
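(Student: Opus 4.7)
The plan is to treat the three assertions in order, the first being the substantive one. Throughout I set $R := \widehat{\KK_s \otimes_{\CC_\infty} \TT_{\underline z}}$ and denote by $\underline F = (F_1, \ldots, F_n) \in R^n$ a candidate rigid immersion; the maximal rank condition on $J_{\underline z}(\underline F)$ over $D^n$ is equivalent to $\det J_{\underline z}(\underline F)$ being a nowhere vanishing analytic function on $D^n$, hence a unit of $R$.

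For the first assertion, I argue by induction on the total degree $d$ of $P \in \KK_s[\underline X]$. Differentiating the identity $P(\underline F) = 0$ with respect to each $z_j$, the chain rule (applicable since $P$ involves no $\mu$) together with the invertibility of $J_{\underline z}(\underline F)$ force $\frac{\partial P}{\partial X_i}(\underline F) = 0$ for every $i = 1, \ldots, n$. If some $\partial P / \partial X_i$ is non-zero, its total degree is $< d$ and the inductive hypothesis applied to the same $\underline F$ yields the contradiction $\partial P / \partial X_i = 0$. The hard case, where all partial derivatives of $P$ vanish identically, is where characteristic $p$ bites: since $\KK_s$ is not perfect, $P \in \KK_s[X_1^p, \ldots, X_n^p]$ need not be a $p$-th power, so one cannot directly untwist the relation $P(\underline F) = 0$.

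The way around this is an analytic reparametrization. Let $k \geq 1$ be maximal with $P = \widetilde P(X_1^{p^k}, \ldots, X_n^{p^k})$, so that $\widetilde P \notin \KK_s[Y_1^p, \ldots, Y_n^p]$ and $\deg \widetilde P = d/p^k < d$. Since $\CC_\infty$ is perfect, $\underline w \mapsto \underline w^{1/p^k}$ is a bijection of $D^n$, and the assignment
\[
\widetilde F_i(\underline w) \; := \; F_i(\underline w^{1/p^k})^{p^k} \; = \; \sum_{\underline a} F_{i, \underline a}^{\,p^k}\, w^{\underline a}
\]
defines an element of $R$. Using that the integer exponents $a_j$ reduce to elements of $\FF_p$, a direct check shows that $J_{\underline w}(\widetilde{\underline F})$ is obtained from $J_{\underline z}(\underline F)$ by raising each entry to the $p^k$-th power and then substituting $z = w^{1/p^k}$; its determinant is still a unit of $R$. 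Hence $\widetilde{\underline F}$ is a rigid immersion into $Z(\widetilde P)$, and the inductive hypothesis forces $\widetilde P = 0$, contradicting the maximality of $k$.

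For the second assertion, the identity $Z(P) = Z(\mu(P))$ observed before the statement shows that \emph{the same} map is a rigid immersion into one zero set if and only if it is into the other, so the equivalence is immediate. For the third assertion, $\mu$ extends to a ring automorphism of $R$ fixing each $z_j$, hence commuting with $\partial/\partial z_j$. The identity $\mu(P(\underline F)) = P^\mu(\mu \underline F)$ places $\mu \underline F$ in $Z(P^\mu)$, and its Jacobian equals the $\mu$-image of $J_{\underline z}(\underline F)$, which remains a unit of $R$; thus $\mu \underline F$ is a rigid immersion into $Z(P^\mu)$, and the converse follows by the same argument applied to $\mu^{-1}$.
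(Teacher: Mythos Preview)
Your proof is correct. Parts (2) and (3) follow the paper verbatim: $Z(P)=Z(\mu(P))$ gives (2), and the coefficient-wise extension of $\mu$ to $R$ transports a rigid immersion between $Z(P)$ and $Z(P^\mu)$ for (3).

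For part (1) the paper simply writes ``by the Jacobian criterion, the components of any rigid immersion are algebraically independent over $\KK_s$,'' treating this as known. You instead supply a self-contained proof by induction on $\deg P$: differentiate $P(\underline F)=0$, use the invertible Jacobian to force $(\partial P/\partial X_i)(\underline F)=0$ for every $i$, and then confront the case where all $X_i$-partials vanish identically. This last step is where your argument earns its keep: over the imperfect base $\KK_s$, the relation $P\in\KK_s[X_1^p,\dots,X_n^p]$ cannot be untwisted by taking $p$-th roots of coefficients, and your reparametrization $\widetilde F_i(\underline w)=F_i(\underline w^{1/p^k})^{p^k}$, exploiting that $\CC_\infty$ is perfect, is exactly what closes the induction. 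So you are proving, rather than citing, the Jacobian-criterion fact the paper invokes; this costs some space but removes the external dependency and makes the characteristic-$p$ subtlety explicit. One small caveat: the equivalence you assert between ``$\det J_{\underline z}(\underline F)$ nowhere vanishing on $D^n$'' and ``unit in $R$'' deserves a word of justification for $R=\widehat{\KK_s\otimes_{\CC_\infty}\TT_{\underline z}}$ (the points of $D^n$ lie only in $\CC_\infty^n$), though the argument goes through with either interpretation.
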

\begin{proof}
It is easy to see, by the Jacobian criterion, that the components of any rigid immersion
are algebraically independent over $\KK_s$. This proves the first property. 
Now, recall that $Z(\mu(P))=Z(P)$ for any $\mu$-polynomial $P$; this suffices to show that $P$
is analytically critical if and only if $\mu(P)$ is analytically critical.
To prove the last property, recall that $Z(P^\mu)=\mu(Z(P))$.
Assume first that $P$ is analytically critical. Then, there is a rigid immersion $D^n\xrightarrow{\underline{F}} Z(P)$. We write $\underline{F}=(F_1,\ldots,F_n)$ 
with $F_i=\sum_{\underline{k}}F_{i,\underline{k}}z_1^{k_1}\cdots z_n^{k_n}$ with $F_{i,\underline{k}}\in\mathbb{K}_s$, for all $i$. We observe that $\mu$ induces a bijection of $D^n$
and, if we set $$\widetilde{F}_i:=\sum_{\underline{k}}\mu(F_{i,\underline{k}})z_1^{k_1}\cdots z_n^{k_n},\quad i=1,\ldots,n,$$
then $\widetilde{\underline{F}}=(\widetilde{F}_1,\ldots,\widetilde{F}_n)$ defines a rigid immersion
$D^n\rightarrow Z(P^\mu)$.
Similarly, let us suppose that $P^\mu$ is analytically critical, and let $\underline{F}=(F_1,\ldots,F_n)$
be a rigid immersion $D^n\rightarrow Z(\mu(P))$. Then,
setting now $$\widetilde{F}_i:=\sum_{\underline{k}}\mu^{-1}(F_{i,\underline{k}})z_1^{k_1}\cdots z_n^{k_n},\ldots, i=1,\ldots,n$$ defines a rigid immersion $D^n\rightarrow Z(P)$ and $P$ is analytically critical.
\end{proof}
Hence, a $\mu$-polynomial of depth zero is analytically regular, but these are not the only examples.
The next definition provides a larger class of 
analytically regular $\mu$-polynomials.
\begin{Definition}
{\em Let $P\in \KK_s[\underline{X}]_\mu$ be as in (\ref{mupoly}), and non-zero. We say that it is {\em tame}
if $c_{\underline{i}}\neq0$ implies that, writing $\underline{i}=(i_{l,j})_{0\leq l\leq n \atop 0\leq j\leq k}$,
we have that $0\leq i_{l,j}<p$ for all $l,j$. }
\end{Definition} 
In other, more loosely words, a tame $\mu$-polynomial is one such that no exponent exceeds, or is equal to $p$.
\begin{Lemma}
Every tame $\mu$-polynomial is analytically regular.
\end{Lemma}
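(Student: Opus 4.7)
The plan is to argue by contradiction and induct on the lexicographic pair $(\operatorname{depth}(P), \deg_{\underline{X}}(P))$, where $\deg_{\underline{X}}(P)$ is the total degree of $P$ in the depth-zero variables $X_1,\ldots,X_n$; the base case will reduce $P$ to an ordinary polynomial and invoke the first assertion of Lemma \ref{noncritical}.

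First I would suppose $P$ is a nonzero tame $\mu$-polynomial that is analytically critical, witnessed by a rigid immersion $\underline{F}:D^n\to Z(P)\subset\KK_s^n$, and write $F_i=\sum_{\underline{k}}F_{i,\underline{k}}\, z_1^{k_1}\cdots z_n^{k_n}$. The key observation is that for every $l\geq 1$,
\[
\mu^l(F_j) \;=\; \sum_{\underline{k}}\mu^l(F_{j,\underline{k}})\, z_1^{p^l k_1}\cdots z_n^{p^l k_n}
\]
is a series in $z_1^{p^l},\ldots,z_n^{p^l}$, so in characteristic $p$ one has $\partial_{z_i}\mu^l(F_j)=0$ whenever $l\geq 1$. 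Differentiating $P(\underline{F}(\underline{z}))=0$ in $\widehat{\KK_s\otimes_{\CC_\infty}\TT_{\underline{z}}}$ with respect to $z_i$ and applying the chain rule, only the depth-zero variables contribute, giving
\[
\sum_{j=1}^{n}(\partial_{X_j}P)(\underline{F})\cdot\partial_{z_i}F_j \;=\; 0 \qquad (i=1,\ldots,n).
\]
Maximal rank of $J_{\underline{z}}(\underline{F})$ on $D^n$ then forces $(\partial_{X_j}P)(\underline{F})=0$ for every $j$, so $\underline{F}$ remains a rigid immersion into $Z(\partial_{X_j}P)$ whenever $\partial_{X_j}P\neq 0$.

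The inductive step now splits. If some $X_j$ occurs in $P$ with positive exponent, tameness forces that exponent into $\{1,\ldots,p-1\}$, hence a unit modulo $p$, and $\partial_{X_j}P$ is a nonzero tame $\mu$-polynomial with the same depth and strictly smaller $\underline{X}$-degree; I replace $P$ by $\partial_{X_j}P$. Otherwise no $X_j$ appears, so each monomial of $P$ is $\mu$ applied to a monomial in the lower-shift variables $\mu^{l-1}(X_j)$; since $\mu$ is an automorphism of $\KK_s$, this lets me factor $P=\mu(P')$ with $P'$ tame of depth $\operatorname{depth}(P)-1$. Because $\mu$ is injective on $\widehat{\KK_s\otimes_{\CC_\infty}\TT_{\underline{z}}}$, the identity $P(\underline{F})=0$ transfers to $P'(\underline{F})=0$, and I replace $P$ by $P'$. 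In either case $(\operatorname{depth},\deg_{\underline{X}})$ strictly decreases in the lexicographic order with depth dominating.

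The induction terminates at depth zero, where $P$ is a nonzero ordinary polynomial in $\underline{X}$; Lemma \ref{noncritical} then contradicts the existence of the rigid immersion $\underline{F}$ into $Z(P)$. The main obstacle is the role of tameness in the first case of the split: without the bound that each exponent is $<p$, a derivative $\partial_{X_j}P$ can vanish identically — this is exactly what happens for the $\mu$-polynomial $\mu(X)-X^p$, which the authors have already shown to be analytically critical — and the descent would break. The complementary characteristic-$p$ identity $\partial_{z_i}\mu^l(F_j)=0$ for $l\geq 1$ is the dual ingredient that makes the Jacobian argument collapse onto the depth-zero variables in the first place.
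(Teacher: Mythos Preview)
Your proof is correct and follows essentially the same approach as the paper's: both argue by contradiction, use the chain rule together with $\partial_{z_i}\mu^l(F_j)=0$ for $l\geq 1$ and the invertibility of the Jacobian to kill the depth-zero partials, then descend. The only cosmetic differences are that the paper inducts on depth with an inner descent on $\deg_{\underline{X}}$ (rather than packaging both into a lexicographic order) and invokes the second assertion of Lemma~\ref{noncritical} for the depth reduction, whereas you recover the same step directly from $(\mu(P'))(\underline{F})=\mu(P'(\underline{F}))$ and injectivity of $\mu$.
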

\begin{proof}
We suppose again that $\underline{X}=(X_1,\ldots,X_n)$.
We shall proceed by induction over
the depth $k$; the statement is clear for depth $k=0$ (see Lemma \ref{noncritical}).
Let $P\in\KK_s[\underline{X}]_\mu$ be a tame $\mu$-polynomial with $\mu$-expansion as in (\ref{mupoly}) and depth $k>0$. By Lemma \ref{noncritical}, we can suppose that $P$ depends of the indeterminates $X_1,\ldots,X_n$
(more precisely, of the indeterminates $\mu^0(X_1),\ldots,\mu^0(X_n)$).
Let us suppose by contradiction that it is analytically critical. Then there is 
a rigid immersion $$D^n\xrightarrow{\underline{F}=(F_1,\ldots,F_n)}Z(P).$$
Since the Tate algebra $\TT_{\underline{z}}$ in the variables $z_1,\ldots,z_n$ is stable for the 
partial derivatives $\frac{\partial}{\partial z_i}$ for all $i$, the map $\underline{z}\mapsto P(\underline{F}(\underline{z}))$ is differentiable in each variable
$z_i$ (in the polydisk $D^n$). 
We note that $$\frac{\partial}{\partial z_j}(\mu^l(X_k)^i(\underline{F}(\underline{z})))=0,\quad j=1,\ldots,n,\quad i,l>0$$ (here, $\mu$ acts as the $p$-power on $\TT_{\underline{z}}$).
By the usual chain rule and the invertibility of the Jacobian 
of $\underline{F}$ we deduce 
that
$$\frac{\partial P}{\partial X_1}(\underline{F}(\underline{z}))=\cdots=\frac{\partial P}{\partial X_n}(\underline{F}(\underline{z}))=0.$$  Observe that 
$0<\deg_{X_i}(P)<p$ for all $i$. Then we have proved that there exists $i\in\{1,\ldots,n\}$ such that $\frac{\partial P}{\partial X_i}$ is tame and analytically critical. Iterating this process, 
we can successively construct a sequence of analytically critical tame polynomials such that 
the sequence of the total degrees in the indeterminates $X_1,\ldots,X_n$ is strictly decreasing
and we ultimately find a tame analytically critical $\mu$-polynomial of depth 
$k$ which does not depend of the indeterminates $X_1,\ldots,X_n$. Then, by Lemma \ref{noncritical},
$P=\mu^r(Q)$ for some $r>0$ and $Q$ tame, analytically critical of depth $<k$
which is impossible.
\end{proof}
\begin{Remark}{\em 
Note that a polynomial $P\in\KK_s[\underline{X}]$, that is, a $\mu$-polynomial of depth zero,
needs not to be tame. Also, the product of two tame $\mu$-polynomials needs not to be tame; however, there exists 
a natural $\KK_s$-algebra structure over the $\KK_s$-vector space of tame $\mu$-polynomials in $\KK_s[\underline{X}]_\mu$
which makes it isomorphic to $\KK_s[\underline{X}]$. See the appendix \S \ref{appendix} with the details of the construction (it will not be used in this paper). Note also that the maps $\CC_\infty^n\rightarrow\KK_s$ that tame polynomials define,
are in natural $\KK_s$-linear isomorphism with the polynomial maps $\CC_\infty^n\rightarrow\KK_s$
induced by polynomials of $\KK_s[\underline{X}]$.}\end{Remark}
\begin{Definition}{\em 
Let $L$ be a $\mu$-difference subfield of $\KK_s$
containing $K_s$.
Let $f_1,\ldots,f_n$ be elements of $\KK_s$. We say that they are {\em regularly $\mu$-independent over $L$}
if for all $P\in L[\underline{X}]_\mu$ analytically regular (hence non-zero), $P(\underline{f})\neq0$, where 
$\underline{f}=(f_1,\ldots,f_n)$. We say that $f_1,\ldots,f_n$ are {\em tamely $\mu$-independent over } $L$
if $P(\underline{f})\neq 0$ for all tame $\mu$-polynomial $P\in L[\underline{X}]_\mu$.}
\end{Definition}
The definition also produces notions of $\mu$-dependence, tame $\mu$-dependence, $\mu$-transcen\-dence,
$\mu$-algebraicity, etc.

If $f_1,\ldots,f_m$ are tamely $\mu$-dependent, then they are regularly $\mu$-dependent.
Also, if $f_1,\ldots,f_m$ are regularly $\mu$-dependent, then they are transformally $\mu$-dependent 
in the sense of Levin \cite{LEV}. But the reverse implications are all false.
We collect a few more properties in the following proposition.
\begin{Proposition}\label{someproperties}
Let $f_1,\ldots,f_n$ be elements of $\KK_s$. We have:
\begin{enumerate}
\item If $f_1,\ldots,f_n$ are regularly $\mu$-independent  over $L$, then they are algebraically independent
over $L$.
\item If $f_1,\ldots,f_n$ are in $\CC_\infty$ and are algebraically independent over $K$, then they are
regularly $\mu$-independent  over $K_s$.
\end{enumerate}
\end{Proposition}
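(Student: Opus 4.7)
The plan is to treat the two parts in order. For part (1) I argue by contrapositive: if $f_1,\ldots,f_n$ are algebraically dependent over $L$, some nonzero $P\in L[X_1,\ldots,X_n]$ satisfies $P(\underline{f})=0$. Regarded as a $\mu$-polynomial of depth zero, such a $P$ is analytically regular by the first assertion of Lemma \ref{noncritical}, so $f_1,\ldots,f_n$ are not regularly $\mu$-independent over $L$.

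For part (2) the key device is to attach to each $\mu$-polynomial $P\in K_s[\underline{X}]_\mu$, written as in (\ref{mupoly}), the classical polynomial
$$\widetilde{P}(\underline{X}):=\sum_{\underline{i}}c_{\underline{i}}\prod_{l=1}^{n}X_{l}^{m_{l}(\underline{i})}\in K_s[\underline{X}],\qquad m_{l}(\underline{i}):=\sum_{j=0}^{k}p^{j}i_{l,j},$$
obtained by formally replacing each $\mu^{j}(X_{l})$ with $X_{l}^{p^j}$. Since $\mu$ acts as the $p$-th power on $\CC_\infty$, one has $P(\underline{f})=\widetilde{P}(\underline{f})$ for every $\underline{f}\in\CC_\infty^{n}$; and since $\mu$ acts as the $p$-th power on $\TT_{\underline{z}}$ as well, one has $P(z_1,\ldots,z_n)=\widetilde{P}(z_1,\ldots,z_n)$ in $\widehat{\KK_s\otimes_{\CC_\infty}\TT_{\underline{z}}}$. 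The second identity is the crucial point: if $\widetilde{P}$ were the zero polynomial while $P\neq 0$, then the identity embedding $F_l(\underline{z})=z_l$ would define a rigid immersion $D^n\to Z(P)$ whose Jacobian is the identity matrix, making $P$ analytically critical. Hence for $P$ analytically regular and nonzero, $\widetilde{P}$ must be nonzero.

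Combining these observations, a vanishing $P(\underline{f})=0$ with $P\in K_s[\underline{X}]_\mu$ analytically regular and nonzero forces a genuine polynomial relation $\widetilde{P}(\underline{f})=0$ with $\widetilde{P}\in K_s[\underline{X}]\setminus\{0\}$. To conclude I descend this relation to $K$: clearing denominators we may assume $\widetilde{P}\in\FF_p[\theta,t_1,\ldots,t_s][\underline{X}]$, and expanding it as a polynomial in $t_1,\ldots,t_s$ with coefficients in $\FF_p[\theta][\underline{X}]$, the algebraic independence of $t_1,\ldots,t_s$ over $\CC_\infty$ forces each such coefficient to vanish at $\underline{f}$. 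Picking a coefficient which is nonzero as a polynomial, we obtain a nontrivial polynomial relation of $f_1,\ldots,f_n$ over $\FF_p(\theta)\subset K$, contradicting the hypothesis. The only non-routine step I foresee is the passage from $\widetilde{P}=0$ to the existence of the rigid immersion supplied by the identity map; the remaining descent from $K_s$ to $K$ is routine bookkeeping with the transcendence of the $t_i$ over $\CC_\infty$.
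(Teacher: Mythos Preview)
Your proof is correct and follows essentially the same path as the paper: part (1) is identical, and for part (2) both arguments pass from $P$ to the ordinary polynomial $\widetilde{P}$ (the paper calls it $Q$) via the fact that $\mu$ coincides with the $p$-th power on $\CC_\infty$, with analytic regularity of $P$ forcing $\widetilde{P}\neq 0$---your identity-map rigid immersion makes explicit precisely what the paper leaves implicit in the sentence ``the map $\CC_\infty^n\to\KK_s$ induced by $P$ is not identically zero''. The only cosmetic difference is the final descent from $K_s$ to $K$: you expand in $t_1,\ldots,t_s$ and invoke their algebraic independence over $\CC_\infty$, whereas the paper specializes the $t_i$ to a suitable point of $D^s\cap K^s$ using density.
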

There exist algebraic elements $f\in\KK_s$ which are transcendental over $K_s$
but tamely $\mu$-algebraic over $K_s$; examples are provided by certain torsion points of the 
Carlitz module, see \S \ref{torsion}.
\begin{proof}[Proof of Proposition \ref{someproperties}]
(1). It follows from Lemma \ref{noncritical}.
\noindent (2). We suppose to have elements $f_1,\ldots,f_n\in\CC_\infty$ and an analytically regular
$\mu$-polynomial $P\in K_s[\underline{X}]_\mu$ with $P(\underline{f})=0$ and
$\underline{f}=(f_1,\ldots,f_n)$. This implies, in particular, that the map $\CC_\infty^n\rightarrow\KK_s$
induced by $P$ is not identically zero. This map is further equal, by the fact that
$\mu$ and $x\mapsto x^p$ agree on $\CC_\infty$, to a polynomial map
induced by a polynomial $Q\in K_s[\underline{X}]$; since the map is not identically zero
by hypothesis, $Q$ is non-zero. Since $K^s$ is dense in $D^s$, there exists an
element $\underline{x}=(x_1,\ldots,x_s)\in D^s\cap K^s$ such that 
the evaluation of the coefficients of $Q$, which are rational functions of $K_s$, is well defined and does not map to $\{0\}$.
This yields a non-trivial algebraic dependence relation of $f_1,\ldots,f_n$ over $K$.\end{proof}

\subsection{A refinement of Denis-Schanuel's conjecture}\label{refinedschanuel}

We set $q=p^e$ with $e>0$ and we fix an integer $s\geq 0$. 
We recall that $K_s=\FF_p(\theta,t_1,\ldots,t_s)\subset\KK_s$. 
We denote by $L_s$ the smallest $\mu$-difference subfield of $\KK_s$
containing $K_s$ and all the $f\in\KK_s$ which are regularly $\mu$-algebraic over $K_s$;
by construction, it is countable.
The exponential function $\exp_C:\mathbb{K}_s\rightarrow C(\mathbb{K}_s)$ is defined by 
$$\exp_C=\sum_{i\geq 0}d_i^{-1}\tau^i=\sum_{i\geq 0}d_i^{-1}\mu^{ei}$$
as in \S \ref{extensionexp}.  We can now state our conjecture. 

\begin{Conjecture}[Operator-theoretic generalization of Denis-Schanuel's Conjecture]
\label{conjecturefunctional}
Let $f_1,\ldots,f_n$ be elements of $\mathbb{K}_s$ and let us write $g_i=\exp_C(f_i)\in\mathbb{K}_s$ for all $i=1,\ldots,n$. If $f_1,\ldots,f_n$
are $\FF_q(t_1,\ldots,t_s)[\theta]$-linearly independent, then $n$ among 
the elements $$f_1,\ldots,f_n,g_1,\ldots,g_n$$ are regularly $\mu$-independent over $L_s$.
\end{Conjecture}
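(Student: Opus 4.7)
The plan is to attempt a reduction to the motivic-Tannakian transcendence theory, in the spirit of Papanikolas' approach in \cite{PAP} but adapted to the Tate-algebra and $\mu$-difference framework of \cite{APT,APT2}. Suppose for contradiction that strictly fewer than $n$ among the $2n$ elements $f_1,\ldots,f_n,g_1,\ldots,g_n$ are regularly $\mu$-independent over $L_s$. The $A[t_1,\ldots,t_s]$-linear independence of the $f_i$, combined with the exact sequence of Proposition \ref{propogenerals}, implies that the $g_i$ are $\FF_q(t_1,\ldots,t_s)$-linearly independent modulo $\widetilde{\pi}\FF_q(t_1,\ldots,t_s)[\theta]$. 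Hence any analytically regular $\mu$-polynomial $P \in L_s[\underline{X}]_\mu$ vanishing at $(f_1,\ldots,f_n,g_1,\ldots,g_n)$ must genuinely couple the two families, and cannot be reduced to a relation within $\{f_i\}$ alone or $\{g_i\}$ alone.

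The next step is to build, for each pair $(f_i,g_i)$, a rank-one difference module encoding the relation $g_i=\exp_C(f_i)$ via the functional equation $C_\theta\circ\exp_C=\exp_C\circ\theta$. Summing these produces a $\tau$-difference module $\mathcal{M}$ over $\KK_s$ whose rigid-analytic trivialization is governed by the pairs $(f_i,g_i)$. The hypothetical relation $P$ should translate, once one carefully distinguishes the role of $\mu$ (an $e$-th root of $\tau$) from that of algebraic operations, into a non-trivial $\tau$-difference equation on the period matrix of $\mathcal{M}$ modulo $L_s$. Here analytical regularity is the crucial hypothesis: it eliminates precisely the tautological $\mu$-relations arising from $c^p=\mu(c)$ on $\CC_\infty$ (and their multi-homogeneous avatars noted after the definition), leaving only genuinely motivic constraints. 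An adaptation of the Anderson-Brownawell-Papanikolas criterion applied to the resulting $t$-motive with coefficients in $\KK_s$ should then force the motivic Galois group to have dimension at least $n$, contradicting the existence of $P$. The countability of $L_s$ plays the structural role of $\QQ$ in Ax's formulation \cite{AX}, making the dimensional count meaningful.

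The main obstacle --- and the reason the author openly refrains from a proof --- is that Conjecture \ref{conjecturefunctional} strictly subsumes Denis's Conjecture \ref{CarlitzianSchanuelConj} through Proposition \ref{someproperties}(2), and in particular contains the full Carlitzian analogue of Schanuel's conjecture as a very special case. Already the first genuinely new instance $n=1$, $s=1$ with $f_1=\widetilde{\pi}/(\theta-t)$ (so that $g_1=\omega$) demands the joint regular $\mu$-independence of $\widetilde{\pi}$ and $\omega$ over $L_1$, a statement that must control \emph{all} analytically regular $\mu$-polynomial relations of unbounded depth simultaneously. Current $t$-motivic transcendence theory treats only $K$-algebraic relations on periods, not the much richer class of analytically regular $\mu$-polynomial relations; bridging this gap appears to require a substantially new ingredient, perhaps a difference-theoretic zero estimate tailored to analytically regular $\mu$-polynomials, or a Tannakian formalism for $\mu$-difference modules over $\KK_s$ that cleanly separates the $\mu$-structure from the underlying $\tau$-structure. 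Short of such a development, the strategy above remains a plan rather than a proof.
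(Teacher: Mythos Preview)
The statement you are addressing is a \emph{conjecture}, and the paper contains no proof of it; indeed the author explicitly warns in the introduction that ``the present paper does not contain substantial mathematical proofs in this direction.'' Immediately after stating Conjecture~\ref{conjecturefunctional} the paper only records that for $s=0$ (or $f_i\in\CC_\infty$) it reduces to Conjecture~\ref{CarlitzianSchanuelConj}, and that it implies Conjectures~\ref{CarlitzianSchanuelConj} and~\ref{improveddenisconj}. There is therefore nothing to compare your attempt against: the paper offers no argument, and you yourself correctly conclude that your outline ``remains a plan rather than a proof.'' On that bottom line you and the paper agree.

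That said, a few points in your sketch are not quite right and would mislead a reader. First, the hypothesis is $\FF_q(t_1,\ldots,t_s)[\theta]$-linear independence, not $A[t_1,\ldots,t_s]$-linear independence, and it does \emph{not} imply that the $g_i$ are linearly independent in any sense: already $f_1=\widetilde{\pi}$ gives $g_1=0$. So the first reduction you propose (``$P$ must genuinely couple the two families'') does not follow. Second, your $n=1$, $s=1$ example is misread: for $n=1$ the conjecture asks that \emph{one} of $f_1,g_1$ be regularly $\mu$-transcendental over $L_1$, not that $\widetilde{\pi}$ and $\omega$ be jointly regularly $\mu$-independent. Since $\omega$ is tamely (hence regularly) $\mu$-algebraic and lies in $L_1$ by construction, the content there is the regular $\mu$-transcendence of $\widetilde{\pi}/(\theta-t)$ over $L_1$, which is the $n=1$ case of Conjecture~\ref{hermitelindemann}. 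Finally, the ``rank-one difference module encoding $g_i=\exp_C(f_i)$'' is too vague to carry weight: the Carlitz exponential is not a finite $\tau$-polynomial, so there is no evident finite-rank $\tau$-module whose period matrix has entries $f_i,g_i$ in the way your sketch suggests; this is precisely the kind of structural gap that keeps the conjecture open.
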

We see that in the case $s=0$
or if $f_1,\ldots,f_n$ all belong to $\CC_\infty$, then this conjecture 
reduces to Conjecture \ref{CarlitzianSchanuelConj} by (2) of Proposition
\ref{someproperties}. Also, for any $s$, Conjecture \ref{conjecturefunctional}
implies Conjecture \ref{CarlitzianSchanuelConj} and Conjecture \ref{improveddenisconj}. We do not know if, reciprocally,
Conjecture \ref{CarlitzianSchanuelConj} implies Conjecture \ref{conjecturefunctional}
in analogy with the hypothesis ($\Sigma$) of \cite{AX}, which is equivalent to
the Schanuel conjecture; this looks unlikely.

\section{Some examples}\label{someexamples}

We are going to give some examples of consequences of our Conjecture \ref{conjecturefunctional}.
We first review basic properties of certain special functions that are used in our examples. 

\subsection{Torsion of the Carlitz exponential.}\label{torsion} Our Carlitz exponential 
$\exp_C:\KK_s\rightarrow C(\KK_s)$ has quite a rich torsion structure that we review here
(see \cite{APT} for more properties). Let $a\in \FF_q(t_1,\ldots,t_s)[\theta]$ be of degree $d\geq 0$ in $\theta$.
Then for all $j=0,\ldots,\deg_\theta(a)-1$, $\frac{\widetilde{\pi}\theta^j}{a}\in\KK_s$.
In particular $\exp_C\left(\frac{\widetilde{\pi}\theta^j}{a}\right)\in\KK_s$ and one sees that
$$C_a\left(\exp_C\left(\frac{\widetilde{\pi}\theta^j}{a}\right)\right)=0,\quad j=0,\ldots,\deg_\theta(a)-1.$$
In fact, the elements
$\exp_C\left(\frac{\widetilde{\pi}\theta^j}{a}\right)\in\KK_s$ constitute an $\FF_q(t_1,\ldots,t_s)$-basis
of the vector space underlying the $\FF_q(t_1,\ldots,t_s)[\theta]$-submodule $\operatorname{Ker}(C_a)\subset C(\KK_s)$. 
Every such a torsion point satisfies a non-trivial linear $\tau$-difference
equation with coefficients in $\FF_q(t_1,\ldots,t_s)[\theta]$, hence it satisfies a 
non-trivial linear $\mu$-difference equation with coefficients in $K_s$.
If moreover $a$ is a polynomial of $\FF_q[t_1,\ldots,t_s,\theta]$ which is monic in $\theta$, then $a^{-1}\in\TT_s$
and we have $\exp_C\left(\frac{\widetilde{\pi}\theta^j}{a}\right)\in\TT_s$ for all $j$. We see that,
for $j=0,\ldots,d-1$, the above elements span the rank $d$ free $\FF_q[t_1,\ldots,t_s]$-module 
$\operatorname{Ker}(C_a)\cap\TT_s$.

The simplest case (with $s=1$, case in which we write $t$ for $t_1$) is given by the 
Anderson-Thakur function and comes with the choice of $a=\theta-t$. Note that
it is also tamely $\mu$-algebraic; indeed, from (\ref{differenceomega}), we deduce that the $\mu$-polynomial $\mu^e(X)-(t-\theta)X=C_{\theta-t}(X)\in K_s[X]_\mu$ vanishes at $\omega$ and is tame.

\subsection{Zeta values.}
The so-called {\em Carlitz zeta values} are defined 
as follows, for $n\geq 1$ an integer, where
$A^+$ denotes the multiplicative monoid of monic polynomials of $A$, and the product runs over the irreducible polynomials $P\in A^+$:
$$\zeta_A(n)=\sum_{a\in A^+}a^{-n}=\prod_P\left(1-\frac{1}{P^n}\right)^{-1}\in 1+\theta^{-1}\FF_q[[\theta^{-1}]].$$
Carlitz essentially proved in  \cite{CAR} that 
\begin{equation}\label{carlitzidentity}\exp_C(\zeta_A(1))=1.\end{equation}
Conjectures \ref{CarlitzianSchanuelConj} and \ref{conjecturefunctional} immediately imply that $\zeta_A(1)$ is transcendental
over $K$ and indeed, this follows from the transcendence theory of 
the Carlitz module in a variety of ways. First of all, this can be viewed as a
consequence of the analogue of Hermite-Lindemann Theorem that can be found in \cite{WAD}.
Also, this follows from the Theorem of Papanikolas \cite{PAP}, see also 
Denis \cite{DEN1} and the survey \cite{PEL0} by the author, not to mention other 
proofs, with more diophantine, or automatic flavor.
More generally, Chieh-Yu Chang and Jing Yu have proved in \cite{CHYU}
the following result.
\begin{Theorem}[Chieh-Yu Chang \& Jing Yu]\label{changyu}
The element $\widetilde{\pi}$ and all the Carlitz zeta-values $\zeta_A(n)$ for all $n\geq 1$ not divisible 
by $p$ and $q-1$, are algebraically independent over $K$.
\end{Theorem}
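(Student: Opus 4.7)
The plan is to follow the now-classical route of Anderson--Thakur together with Papanikolas's theorem relating periods of pre-$t$-motives to motivic Galois groups. First, Anderson--Thakur's formula expresses $\widetilde{\pi}^n \zeta_A(n)$ (up to an explicit factor in $K$) as the value at $t=\theta$ of a rigid analytic trivialisation of the $n$-th tensor power $C^{\otimes n}$ of the Carlitz module. Thus, for any finite set $S \subset \{n \geq 1 : p \nmid n,\ (q-1) \nmid n\}$, the transcendence degree over $K$ of $K(\widetilde{\pi},\{\zeta_A(n)\}_{n \in S})$ equals the transcendence degree of the field generated by all entries of the period matrices of the pre-$t$-motives $M_n$ attached to $C^{\otimes n}$, for $n \in S$.

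Second, I would invoke Papanikolas's theorem: the transcendence degree of the period field of the direct sum $M = \bigoplus_{n \in S} M_n$ equals the dimension of the motivic Galois group $\Gamma_M$ attached to $M$. One has natural projections $\Gamma_M \hookrightarrow \prod_{n \in S} \Gamma_{M_n}$, with each $\Gamma_{M_n}$ already known to be the full $\mathbb{G}_m$ (this is essentially Anderson--Thakur together with the transcendence of $\widetilde{\pi}^n$). The goal is therefore to show
\[
\Gamma_M \;=\; \prod_{n \in S} \mathbb{G}_m,
\]
so that $\dim \Gamma_M = |S|$, and then the algebraic independence of $\widetilde{\pi}$ and the $\zeta_A(n)$ follows after one more dimension count, matching the $|S|+1$ quantities against the period entries, using that $\widetilde{\pi}$ itself appears as the period of $C^{\otimes 1}$.

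The work therefore reduces to ruling out any proper closed subgroup $\Gamma_M \subsetneq \prod_n \mathbb{G}_m$. By Tannakian duality this amounts to showing that there is no non-trivial character relation between the motives $M_n$ for $n \in S$, i.e.\ no isomorphism $M_n^{\otimes a_n} \otimes \bigotimes_{m \neq n} M_m^{\otimes a_m} \cong \mathbf{1}$ with integer exponents $a_m$ not all zero. The two obvious sources of such relations are (i) the Carlitz/Euler analogue $\zeta_A(n)/\widetilde{\pi}^n \in K$ when $(q-1) \mid n$, giving $M_n \cong M_1^{\otimes n}$, and (ii) the Frobenius relation $\zeta_A(pn) = \zeta_A(n)^p$, giving $M_{pn} \cong M_n^{(p)}$. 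Restricting to $n$ with $p \nmid n$ and $(q-1) \nmid n$ eliminates precisely these two families; the substantive content is that no further relation exists, which is shown by analysing the action of $\Gamma_M$ on the $t$-adic fibre, or equivalently by explicit computation of hyperderivative-type recurrences using Anderson--Thakur polynomials.

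The main obstacle, and the core of the Chang--Yu argument, is precisely this last step: proving that the motivic Galois group of the direct sum is as large as possible modulo the forbidden congruence classes. This is where the hypothesis on $n$ enters essentially; once it is granted, the transcendence statement falls out of Papanikolas's theorem by a clean dimension count. The remaining steps (the identification of periods, the Tannakian formalism, and reduction to $\Gamma_M$) are structural and, in the framework of the present paper, may be regarded as the rigid-analytic realisation of the general philosophy underlying Conjecture \ref{conjecturefunctional}.
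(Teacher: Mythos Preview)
The paper does not give a proof of this theorem at all; it is quoted from \cite{CHYU} and then used as a black box in the proofs of Proposition~\ref{simpleproposition} and Lemma~\ref{acounterexample}. So there is nothing to compare your proposal against in the paper itself.

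That said, your outline is the right strategy in spirit (Anderson--Thakur interpolation plus Papanikolas's theorem plus a Galois group computation), but the structural picture you draw is off in a way that matters. The $t$-motives $M_n$ whose periods capture $\zeta_A(n)$ are not the tensor powers $C^{\otimes n}$ themselves: they are the rank-two \emph{extensions} of the trivial motive by $C^{\otimes n}$ built from the Anderson--Thakur polynomials. Consequently $\Gamma_{M_n}$ is not $\mathbb{G}_m$ but (for admissible $n$) a two-dimensional solvable group, a semidirect product $\mathbb{G}_a\rtimes\mathbb{G}_m$, and the Galois group of the direct sum $\bigoplus_{n\in S}M_n$ is not contained in a torus $\prod_n\mathbb{G}_m$ but in a group of the shape $\mathbb{G}_a^{|S|}\rtimes\mathbb{G}_m$, with a single $\mathbb{G}_m$ coming from the common Carlitz quotient. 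The goal dimension is therefore $|S|+1$, which matches the $|S|+1$ quantities $\widetilde{\pi},\zeta_A(n)_{n\in S}$ directly, without the extra bookkeeping you sketch.

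Because of this, the obstruction you describe --- ruling out nontrivial character relations $\bigotimes_n M_n^{\otimes a_n}\cong\mathbf{1}$ --- is not the relevant one. What Chang and Yu actually have to show is that the \emph{unipotent radical} of $\Gamma_M$ is all of $\mathbb{G}_a^{|S|}$; this is done by exploiting the weight decomposition of the $\mathbb{G}_m$-action on the unipotent part (the $n$-th coordinate carries weight $n$, and distinct admissible $n$ give distinct weights), together with the fact that each individual $\Gamma_{M_n}$ is already two-dimensional. The hypotheses $p\nmid n$ and $(q-1)\nmid n$ enter exactly to guarantee this last point, since $(q-1)\mid n$ collapses the extension (Bernoulli--Carlitz) and $p\mid n$ makes $M_n$ Frobenius-related to $M_{n/p}$. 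Your proposal would benefit from rewriting the middle two paragraphs with this corrected group-theoretic target in mind.
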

Note that the elements of the above theorem generate the $K$-algebra
$$K(\widetilde{\pi},\zeta_A(1),\zeta_A(2),\ldots).$$ Indeed, on one side,
we have the ``trivial sum-shuffle relations" 
\begin{equation}\label{trivialsumshuffle}\zeta_A(pn)=\zeta_A(n)^p=\mu(\zeta_A(n)),\end{equation} 
and on the other hand, we have the result of Carlitz \begin{equation}\label{Carlitz}\zeta_A(k(q-1))\in K^\times\widetilde{\pi}^{k(q-1)}\end{equation} for all $k\geq 1$.
In other words, The so-called Bernoulli-Carlitz relations $\zeta_A(n)\in K^\times$
for $n>0$ such that $q-1\mid n$ and the relations (\ref{trivialsumshuffle}) exhaust all the algebraic dependence relations
between $\widetilde{\pi}$ and $\zeta_A(n)$ for all $n\geq 1$.

\subsubsection{Zeta values in Tate algebras}
We recall from \cite{APT} the construction of {\em Carlitz zeta values in $\mathbb{T}_s$}.
They are defined as follows, for $n\geq 1$ and an integer  $s\geq 0$:
$$\zeta_A(n;s)=\sum_{a\in A^+}a^{-n}a(t_1)\cdots a(t_s)=\prod_P\left(1-\frac{P(t_1)\cdots P(t_s)}{P^n}\right)^{-1}.$$ The above product converges in the complete topological subgroup 
$$1+\theta^{-1}\FF_q(t_1,\ldots,t_s)[[\theta^{-1}]]\subset\TT_s^\times.$$
The classical Carlitz zeta values are a special case of our construction
with $s=0$; in our notations, $\zeta_A(n)=\zeta_A(n;0)$. 
 In terms of the variables $t_1,\ldots,t_s$, these series define 
entire functions $\CC_\infty^s\rightarrow\CC_\infty$ (see \cite{ANG&PEL1}). Therefore, evaluation at $t_i=\theta^{q^{k_i}}$,
$i=1,\ldots,s$ and $k_i\in\ZZ$ makes sense and, for $n>0$, we have the identities
$$\zeta_A(n)=\zeta_A(n;0)=\zeta_A(n+q^{k_1}+\cdots+q^{k_s};s)|_{t_i=\theta^{q^{k_i}}}.$$
In this respect, we can view these functions as entire {\em interpolations of Carlitz zeta values}.
Moreover, note that, for all $m>0$, there exists $N\geq 0$ and $s$ such that 
$$\zeta_A(m)=\tau^N(\zeta_A(1;s))_{t_i=\theta,\forall i=1,\ldots,s}.$$ 
\subsubsection{Consequences of Conjecture \ref{conjecturefunctional}}

We begin by reviewing an important property
of the Carlitz zeta values $\zeta_A(1;s)$.
In \cite{APT}, the following generalization of Carlitz's identity (\ref{carlitzidentity}) is proved:
\begin{Theorem}\label{theorem}
For $s\geq0$, we have
$$\exp_C(\zeta_A(1;s)\omega(t_1)\cdots\omega(t_s))=P_s(t_1,\ldots,t_s)\omega(t_1)\cdots\omega(t_s),$$
where $P_s\in A[t_1,\ldots,t_s]$.
Moreover, for $s>1$, we have $P_s=0$ if and only if $s\equiv1\pmod{q-1}$.
In this case, we have
\begin{equation}\label{torsion}\zeta_A(1;s)=\frac{\widetilde{\pi}B_s}{\omega(t_1)\cdots\omega(t_s)},
\end{equation}
with $B_s\in A[t_1,\ldots,t_s]\setminus\{0\}$.
\end{Theorem}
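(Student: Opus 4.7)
The plan is to study $u_s := \exp_C(z_s)(\omega_1\cdots\omega_s)^{-1} \in \TT_s$, where $z_s := \zeta_A(1;s)\omega_1\cdots\omega_s$ and $\omega_i := \omega(t_i)$. Since each $\omega_i \in \TT^\times$ the inversion is legitimate in $\TT_s$, and the first assertion of the theorem is equivalent to $u_s \in A[t_1,\ldots,t_s]$; that polynomial is then $P_s$.

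The first step is to derive a $\tau$-difference equation for $u_s$. The intertwining $C_\theta\exp_C=\exp_C\cdot\theta$ yields $\tau(\exp_C(x))=\exp_C(\theta x)-\theta\exp_C(x)$, and the functional equation (\ref{differenceomega}) for $\omega$ gives $\tau(\omega_1\cdots\omega_s)=\prod_i(t_i-\theta)\,\omega_1\cdots\omega_s$. Since $\omega_1\cdots\omega_s\in\TT_s^\times$, a direct computation yields
$$\prod_{i=1}^s(t_i-\theta)\,\tau(u_s)=\frac{\exp_C(\theta z_s)}{\omega_1\cdots\omega_s}-\theta u_s.$$
To close this up I would expand $\zeta_A(1;s)$ as its defining series and use the $\FF_q[t_1,\ldots,t_s]$-linearity and continuity of $\exp_C$ on $\TT_s$, together with the Carlitz power-sum identities on $\sum_{\deg a=d}a(t_1)\cdots a(t_s)/a$ (which control divergent-looking regroupings in characteristic $p$), to rewrite $\exp_C(\theta z_s)/(\omega_1\cdots\omega_s)$ in the form $Q_0\,u_s+Q_1$ with $Q_0,Q_1\in A[t_1,\ldots,t_s]$. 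The outcome is a first-order $\tau$-linear equation
$$\prod_{i=1}^s(t_i-\theta)\,\tau(u_s)=Q_0(t_1,\ldots,t_s,\theta)\,u_s+Q_1(t_1,\ldots,t_s,\theta).$$

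For polynomiality and integrality, combine this $\tau$-equation with the estimate $v_\infty(\omega_i)=-1/(q-1)$ from Proposition~\ref{omegagamma}(1) and the convergence of $\exp_C$ on $\TT_s$ to bound $\|u_s\|$ and control the $t_i$-degrees; a monomial of too large total degree would violate the recursion on coefficients coming from the $\tau$-equation. Combined with $\TT_s^{\mu=1}=\FF_p[t_1,\ldots,t_s]$ and the polynomial nature of the right-hand side, this forces $u_s\in A[t_1,\ldots,t_s]$, giving the first assertion with $P_s:=u_s$.

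For the vanishing criterion, $P_s\equiv 0$ is equivalent to $z_s\in\ker\exp_C=\widetilde{\pi}A[t_1,\ldots,t_s]$, i.e., to $\zeta_A(1;s)\omega_1\cdots\omega_s$ being a polynomial multiple of $\widetilde{\pi}$. From the product formula of Proposition~\ref{omegagamma}(1) and (\ref{pi}), the ratio $\omega_1\cdots\omega_s/\widetilde{\pi}$ factors as $(-\theta)^{(s-1)/(q-1)}\cdot U$ for a unit $U$ in a ring of $K$-convergent power series in the $t_i$; the prefactor lies in $K$ precisely when $(q-1)\mid(s-1)$, i.e., when $s\equiv 1\pmod{q-1}$. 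When this congruence fails, the $(q-1)$-th root obstruction forces $z_s/\widetilde{\pi}\notin K(t_1,\ldots,t_s)$ and hence $P_s\neq 0$. When it holds, Carlitz's relation (\ref{Carlitz}) together with an evaluation of $\zeta_A(1;s)\omega_1\cdots\omega_s/\widetilde{\pi}$ via the interpolation identities relating $\zeta_A(\cdot;s)$ to classical Carlitz zeta-values at appropriate specializations of the $t_i$ shows this ratio is a nonzero element of $A[t_1,\ldots,t_s]$, yielding $B_s$ and formula~(\ref{torsion}). The main obstacle is clearly Step 2: extracting the precise polynomial expression $Q_0 u_s+Q_1$ for $\exp_C(\theta z_s)/(\omega_1\cdots\omega_s)$ from the formal manipulation of $\theta\zeta_A(1;s)$, since the naive rewriting $\theta/a=1-(a-\theta)/a$ produces a divergent term $\sum_{a\in A^+}a(t_1)\cdots a(t_s)$ that must be reabsorbed into $\widetilde{\pi}A[t_1,\ldots,t_s]$ via Carlitz's characteristic-$p$ power-sum vanishing; once a clean $\tau$-equation is in hand, the remaining polynomiality and the vanishing dichotomy follow by the valuation and Bernoulli-Carlitz arguments above.
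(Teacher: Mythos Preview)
First, a structural remark: the paper does not actually prove this theorem; it quotes it from \cite{APT} and then uses it as input to Theorem~\ref{avariant}. So there is no ``paper's own proof'' to compare against, and your sketch is an independent attempt.

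As for the sketch itself, the overall strategy of dividing by $\omega_1\cdots\omega_s$ and seeking a $\tau$-difference equation for $u_s$ is reasonable in spirit, but there is a genuine gap that you yourself flag at the end: you never establish that $\exp_C(\theta z_s)/(\omega_1\cdots\omega_s)$ has the form $Q_0 u_s + Q_1$ with $Q_0,Q_1\in A[t_1,\ldots,t_s]$. The manipulation $\theta/a = 1 - (a-\theta)/a$ does not close up in any evident way, and the appeal to ``Carlitz power-sum vanishing'' is too vague to carry the argument; the divergent tail $\sum_{a\in A^+}a(t_1)\cdots a(t_s)$ is not a priori an element of $\widetilde\pi A[t_1,\ldots,t_s]$ or of anything else. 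Without a concrete first-order $\tau$-equation with polynomial coefficients actually in hand, the subsequent polynomiality argument (bounding $t_i$-degrees via a recursion on coefficients) has nothing to stand on. The proof in \cite{APT} proceeds by a different route, through the Anderson--Stark-unit / log-algebraicity formalism, rather than by isolating a $\tau$-equation for $u_s$.

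On the vanishing criterion, your ``only if'' direction is essentially correct: the $(q-1)$-th-root prefactor in $\omega_1\cdots\omega_s/\widetilde\pi$ lies in $K_\infty$ exactly when $s\equiv 1\pmod{q-1}$, and when it does not, $z_s/\widetilde\pi$ cannot be in $A[t_1,\ldots,t_s]$. But the ``if'' direction is not proved: the disappearance of the root obstruction does not by itself force $z_s\in\widetilde\pi A[t_1,\ldots,t_s]$, and invoking the Bernoulli--Carlitz relation~(\ref{Carlitz}) through specializations only constrains individual evaluations of $\zeta_A(1;s)\omega_1\cdots\omega_s/\widetilde\pi$, not the function itself as an element of $\TT_s$. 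That implication is precisely the substantive content of the theorem and requires the machinery of \cite{APT}.
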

We shall now use this result to prove:
\begin{Theorem}\label{avariant} Conjecture 
\ref{conjecturefunctional} implies the truth of the following statement:
The element $\wpi$ and all the elements $\zeta_A(1;s)$ for $s\not\equiv1\pmod{q-1}$ are 
regularly $\mu$-independent over $K_s$.
\end{Theorem}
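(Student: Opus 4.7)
Plan. By finiteness of analytic regularity checks it suffices, for any $N$ and any pairwise distinct $s_1,\ldots,s_N\geq 0$ with each $s_i\not\equiv 1\pmod{q-1}$, to prove that the tuple $(\wpi,\zeta_A(1;s_1),\ldots,\zeta_A(1;s_N))$ is regularly $\mu$-independent. Fix such a tuple, choose $S\geq\max_i s_i$, work inside $\KK_S$, and set $\omega_s:=\omega(t_1)\cdots\omega(t_s)$, $f_0:=\wpi$, $f_i:=\zeta_A(1;s_i)\omega_{s_i}$ for $i=1,\ldots,N$. The plan is to apply Conjecture~\ref{conjecturefunctional} to $f_0,\ldots,f_N$ and then strip off the $\omega_{s_i}$-factors.

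First, Theorem~\ref{theorem} (and Carlitz's identity $\exp_C(\zeta_A(1))=1$ at $s=0$) yields $\exp_C(f_0)=0$ and $\exp_C(f_i)=P_{s_i}\omega_{s_i}$ with $P_{s_i}\in A[t_1,\ldots,t_{s_i}]\setminus\{0\}$ since $s_i\not\equiv 1\pmod{q-1}$. Each $\omega(t_j)$ satisfies the tame $\mu$-polynomial relation $\mu^e(X)-(t_j-\theta)X=0$, so $\omega(t_j)\in L_S$; hence every $\exp_C(f_i)$ lies in $L_S$. Assuming for the moment that $f_0,\ldots,f_N$ are $\FF_q(t_1,\ldots,t_S)[\theta]$-linearly independent, Conjecture~\ref{conjecturefunctional} yields $N+1$ regularly $\mu$-independent elements over $L_S$ inside $\{f_i,\exp_C(f_i)\}_{i=0}^N$; since each $\exp_C(f_i)$ already lies in $L_S$ and cannot contribute, $f_0,\ldots,f_N$ themselves are regularly $\mu$-independent over $L_S$.

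The hard step is the linear independence of $f_0,\ldots,f_N$ over $\FF_q(t_1,\ldots,t_S)[\theta]$. Expanded as generalized series in $\theta^{-1}$, $\wpi$ has support in $-\tfrac{q}{q-1}+\ZZ_{\geq 0}$ while $f_i$ ($i\geq 1$) has support in $-\tfrac{s_i}{q-1}+\ZZ_{\geq 0}$. Multiplication by $a_i\in\FF_q(t_1,\ldots,t_S)[\theta]$ shifts supports only by integers, so any relation $\sum a_i f_i=0$ decomposes along residue classes modulo $\ZZ$. The $f_0$-class is $-\tfrac{1}{q-1}\pmod\ZZ$, which coincides with an $f_i$-class only if $s_i\equiv 1\pmod{q-1}$, forbidden by hypothesis; hence $a_0=0$. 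For each residue $r\not\equiv 1\pmod{q-1}$, applying $\exp_C$ to $\sum_{s_i\equiv r}a_i f_i=0$ and using $\tau(\omega(t_j))=(t_j-\theta)\omega(t_j)$ to pull $\omega_{s_i}$ out of each term as a common factor collapses the identity to $\sum R_i(\theta,t)\omega_{s_i}=0$, with $R_i\in\FF_q(t_1,\ldots,t_S)[\theta]$ explicit in $a_i$ and $P_{s_i}$. The algebraic independence of $\omega(t_1),\ldots,\omega(t_S)$ over $K(t_1,\ldots,t_S)$ (an extension of the transcendence statement of Proposition~\ref{omegagamma}) then forces each $R_i=0$, i.e. $a_i f_i\in\operatorname{Ker}(\exp_C)=\FF_q(t_1,\ldots,t_S)[\theta]\wpi$; writing $a_i f_i=b_i\wpi$ and taking $v_\infty$ gives $v_\infty(a_i)-s_i/(q-1)=v_\infty(b_i)-q/(q-1)$ with $v_\infty(a_i),v_\infty(b_i)\in\ZZ$, whence $s_i\equiv q\equiv 1\pmod{q-1}$, contradicting the hypothesis. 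So every $a_i=0$.

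Finally, each $\omega_{s_i}$ lies in $L_S^\times$, and the substitution $X_i\mapsto\omega_{s_i}^{-1}X_i$ transforms an analytically regular $\mu$-polynomial $P\in L_S[\underline{X}]_\mu$ into a $\mu$-polynomial $\widetilde P\in L_S[\underline{X}]_\mu$ which is still analytically regular: by the chain rule, any rigid immersion realizing $\widetilde P$ as analytically critical would, after component-wise multiplication by the $\omega_{s_i}$'s, give a rigid immersion realizing $P$ as analytically critical. Hence an analytically regular $\mu$-polynomial over $L_S$ vanishing at $(\wpi,\zeta_A(1;s_1),\ldots,\zeta_A(1;s_N))$ would produce one vanishing at $(f_0,\ldots,f_N)$, contradicting the preceding paragraph. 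We conclude that $\wpi$ and the $\zeta_A(1;s_i)$ are regularly $\mu$-independent over $L_S$, a fortiori over $K_S$. The principal difficulty is the linear independence step, which hinges on the algebraic independence of the $\omega(t_j)$'s and on the valuation-theoretic exclusion afforded by $s_i\not\equiv 1\pmod{q-1}$.
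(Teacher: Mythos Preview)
Your overall architecture matches the paper's: set $f_0=\wpi$, $f_i=\omega_{s_i}\zeta_A(1;s_i)$, use Theorem~\ref{theorem} to see that every $\exp_C(f_i)$ is tamely (hence regularly) $\mu$-algebraic and thus lies in $L_S$, feed $f_0,\ldots,f_N$ into Conjecture~\ref{conjecturefunctional}, and then strip off the $\omega_{s_i}$-factors using $\omega_{s_i}\in L_S^\times$. Your explicit justification of the last step (that $X_i\mapsto c_iX_i$ with $c_i\in L_S^\times$ preserves analytic regularity) is a welcome expansion of what the paper leaves implicit.

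The substantive divergence is in the proof that $f_0,\ldots,f_N$ are $\FF_q(t_1,\ldots,t_S)[\theta]$-linearly independent. The paper argues directly with polar divisors: each $\omega_{s_j}\zeta_A(1;s_j)$, as a meromorphic function of $(t_1,\ldots,t_{s_0})$, has polar divisor exactly $D_{s_j}$ (here one needs Goss's result that Carlitz zeta values at positive integers do not vanish, so that the $\zeta$-factor does not kill any pole of $\omega_{s_j}$); since the $D_{s_j}$ are strictly nested with infinitely many new components at each step, while any $a_j\in A[t_1,\ldots,t_{s_0}]$ has a zero locus with only finitely many components, no cancellation is possible. Your route instead decomposes a putative relation along $\QQ/\ZZ$-support classes, applies $\exp_C$, and reduces to $\sum_i R_i\,\omega_{s_i}=0$ with $R_i\in\FF_q(t)[\theta]$, after which you invoke the algebraic independence of $\omega(t_1),\ldots,\omega(t_S)$ over $K(t_1,\ldots,t_S)$. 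This last statement is true but is not established anywhere in the paper; Proposition~\ref{omegagamma} gives only transcendence of a single $\omega$, and the passage to several variables is a genuine extension you have not supplied. In fact you only need \emph{linear} independence of the monomials $\omega_{s_1},\ldots,\omega_{s_N}$ over $K(t_1,\ldots,t_S)$, and the cleanest proof of that is precisely the nested-polar-divisor argument the paper uses. So your linear-independence step, as written, has a gap; once it is patched (e.g.\ by the polar-divisor argument), your detour through $\exp_C$ becomes correct but longer than the paper's direct route, with the trade-off that you avoid quoting Goss's non-vanishing result.
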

In particular, we deduce, conditionally upon verification of Conjecture \ref{conjecturefunctional}, that the values considered are algebraically independent and also tamely $\mu$-independent
over $K_s$.
\begin{proof}[Proof of Theorem \ref{avariant}]
We choose $s_0>0$
and we denote by $\mathfrak{S}$ the set of integers $s$ with $s_0\geq s\geq 0$ such that $s\not\equiv1\pmod{q-1}$; we choose $s_0$ so that $\mathfrak{S}$ is non-empty. We also denote  by $\omega_s$, for $s\in\mathfrak{S}$, the product
$\omega(t_1)\cdots\omega(t_s)$, which belongs to $\TT_{s_0}$. 
Observe that, by the difference equation (\ref{differenceomega}), for all $s\in\mathfrak{S}$, 
 $$\mu^e(\omega_{s})=(t_1-\theta)\cdots(t_{s}-\theta)\omega_{s},$$ which implies that $\omega_{s}$
is tamely, thus regularly $\mu$-algebraic over $K_s$. 

For fixed $s\in\mathfrak{S}$, the function $\omega_s$  also defines a meromorphic function
$\CC_\infty^{s_0}\rightarrow\CC_\infty$ whose polar divisor $D_s$ is, by Proposition
\ref{omegagamma}, given by the free sum (with multiplicity one) of the affine subsets of co-dimension
one
$$D_{i,k,s}=\{(x_1,\ldots,x_{i-1},\theta^{q^k},x_{i+1},\ldots,x_s,x_{s+1},\ldots,x_{s_0}):x_j\in\CC_\infty\forall j\}\subset\CC_\infty^{s_0},$$ for $k\geq 0$ and $1\leq i\leq s$, $s\in\mathfrak{S}$. We note that if $s<s'$
and $s,s'\in\mathfrak{S}$, then $D_s\subset D_{s'}$ and $D_{s'}\setminus D_s$ has infinitely many irreducible components.

We claim that the elements $\wpi$ and $\omega_s\zeta_A(1;s)$ with $s\in\mathfrak{S}$ are $\FF_q(t_1,\ldots,t_{s_0})[\theta]$-linearly independent. We write $\mathfrak{S}=\{s_1,\ldots,s_n\}$. Let us assume by contradiction that there exists a non-trivial linear dependence relation
$$a_0\wpi+a_1\omega_{s_1}\zeta_A(1;s_1)+\cdots+a_n\omega_{s_n}\zeta_A(1;s_n)=0,\quad a_i\in A[t_1,\ldots,t_{s_0}],s_i\in\mathfrak{S}.$$
For all $\underline{k}:=(k_1,\ldots,k_{s_0})\in\NN^{s_0}$, the congruence condition on $s\in\mathfrak{S}$ implies that the evaluation $\operatorname{ev}_{\underline{k}}(\zeta_A(1;s))$ of $\zeta_A(1;s)$  at the point $(t_1,\ldots,t_s)=(\theta^{q^{k_1}},\ldots,\theta^{q^{k_s}})$ is non-zero (this follows from a well known result of Goss, in \cite{GOS}). This means that for all $j$, the function
$\omega_{s_j}\zeta_A(1,s_j)$ has polar divisor given by $D_{s_j}$. Since 
the locus of the zeros of $a_j$ has only finitely many irreducible components and since the polar divisors of the functions $\omega_{s_j}\zeta_A(1,s_j)$ are embedded one in the other along the total order of $\mathfrak{S}$ induced by $<$ on $\NN$, we deduce a contradiction. This shows that $a_0=a_1=\cdots=a_n=0$ and the 
elements $\widetilde{\pi}$ and $\omega_{s_j}\zeta_A(1,s_j)$ are linearly independent as expected. 

We set $f_j=\omega_{s_j}\zeta_A(1,s_j)$, $f_0=\widetilde{\pi}$
and $g_j=\exp_C(f_j)$ for $0\leq j\leq n$ (note that $g_0=0$).
By Theorem \ref{theorem},
 $$\exp_C[\omega_s\zeta_A(1;s)]\in \omega_sA[t_1,\ldots,t_s],\quad s\geq 0.$$
 In particular, $g_1,\ldots,g_n$ are tamely $\mu$-algebraic over $K_{s_0}$ (they satisfy linear $\mu$-diffe\-rence equations with coefficients in $K_{s_0}$).
 Conjecture \ref{conjecturefunctional} then implies that $f_1,\ldots,f_n$
 are regularly $\mu$-independent over $L_{s_0}$. Now, the theorem follows from the fact that $\omega_{s}\in L_{s_0}$ for all 
 $s\leq s_0$ and that $K_{s_0}\subset L_{s_0}$.
\end{proof}
The next Proposition goes in the direction of Theorem \ref{avariant}.
The statement is so much weaker, but holds unconditionally and is deduced from an entirely different statement: Chang and Yu's Theorem \ref{changyu}.

\begin{Proposition}\label{simpleproposition}
Assuming that $q=p$, the elements $\widetilde{\pi}$ and $\zeta_A(1;s)$ for  $0\leq s\leq p-1,s\neq 1$ are tamely $\mu$-independent over $K_s$.
\end{Proposition}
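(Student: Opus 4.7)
The plan is to convert a hypothetical tame $\mu$-polynomial relation into an algebraic relation between $\widetilde{\pi}$ and primitive Carlitz zeta values, and then invoke Theorem \ref{changyu} for a contradiction.

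Suppose, for contradiction, that there exists a non-zero tame $\mu$-polynomial $P \in K_{p-1}[Y, X_0, X_2, \ldots, X_{p-1}]_\mu$ of depth $k$ satisfying $P(\widetilde{\pi}, \zeta_A(1;0), \zeta_A(1;2), \ldots, \zeta_A(1;p-1)) = 0$ in $\TT_{p-1}$. After clearing denominators the coefficients $c_{\underline{i}}$ lie in $A[t_1, \ldots, t_{p-1}]$, so this identity holds between entire functions on $\CC_\infty^{p-1}$. The central maneuver is to \emph{shift then specialize}: apply $\mu^M$ for some $M$ much larger than $k$, then evaluate at $t_i = \theta^{p^{k_i}}$ for a tuple $\underline{k}$ of positive, distinct, large integers. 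Using $\mu^{M+j}(\zeta_A(1;s)) = \zeta_A(p^{M+j};s)$ together with the interpolation identity $\zeta_A(n;s)|_{t_i=\theta^{p^{k_i}}} = \zeta_A(n - \sum_{i\leq s}p^{k_i})$ (valid once $n > \sum p^{k_i}$), each such $\mu^j$-image specializes to $\zeta_A(n_{j,s})$ with $n_{j,s} = p^{M+j} - p^{k_1} - \cdots - p^{k_s}$.

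A direct congruence gives $n_{j,s} \equiv 1-s \pmod{p-1}$; the hypothesis $s \neq 1$ in the range forces $(p-1) \nmid n_{j,s}$. The \emph{minus} sign in $n_{j,s}$, produced by the $\mu^M$-shift, is exactly what avoids the Bernoulli-Carlitz obstruction that would plague un-shifted evaluations. Stripping the $p$-part via $\zeta_A(p^v n) = \zeta_A(n)^{p^v}$ produces primitive values $\zeta_A(n'_{j,s})$, pairwise distinct for appropriately spread-out $M$ and $\underline{k}$. Similarly $\mu^{M+j}(\widetilde{\pi})$ and $\mu^{M+j}(\zeta_A(1;0))$ give $\widetilde{\pi}^{p^{M+j}}$ and $\zeta_A(1)^{p^{M+j}}$. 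Thus the relation becomes a polynomial identity over $K$ in $\{\widetilde{\pi}, \zeta_A(1)\} \cup \{\zeta_A(n'_{j,s}) : 2\le s \le p-1,\ 0\le j \le k\}$, which is algebraically independent over $K$ by Theorem \ref{changyu}, forcing every monomial coefficient to vanish.

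Tameness ($0 \le i_{l,j} < p$) together with the $p^M$-scaling makes the exponent-vector map injective on multi-indices $\underline{i}$: the $\widetilde{\pi}$-exponent $p^M \sum_j p^j i_{Y,j}$ recovers $\underline{i}_Y$ by base-$p$ uniqueness, similarly for $\underline{i}_0$, and each exponent $p^{v_{j,s}} i_{s,j}$ on the distinct $\zeta_A(n'_{j,s})$ recovers $i_{s,j}$. Hence each $\mu^M(c_{\underline{i}})(\theta^{p^{k_1}}, \ldots, \theta^{p^{k_{p-1}}}) = 0$ separately. Letting $\underline{k}$ vary over all large distinct tuples and iterating a univariate Vandermonde argument forces $\mu^M(c_{\underline{i}}) = 0$ in $A[\underline{t}]$; injectivity of $\mu^M$ on $A[\underline{t}]$ then yields $c_{\underline{i}} = 0$ for every $\underline{i}$, contradicting $P \neq 0$. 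The hardest step to justify carefully is the calibration in the second paragraph---choosing $M$ and $\underline{k}$ to simultaneously ensure positivity of every $n_{j,s}$, pairwise distinctness of the primitive reductions $n'_{j,s}$, and clean exponent-vector injectivity. The assumption $q = p$ is essential throughout, since it is precisely what makes $\mu$ agree with the $p$-th power on $\CC_\infty$ and thereby lets a tame $\mu$-relation evaluated at $\CC_\infty$-valued data be read as an ordinary polynomial relation, suitable for Chang-Yu's theorem.
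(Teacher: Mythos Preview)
Your proof is correct and follows essentially the same strategy as the paper: specialize the tame $\mu$-relation to an ordinary algebraic relation among $\widetilde{\pi}$ and Carlitz zeta values, check that the arguments avoid both the $p$-power and $(p-1)$-divisibility obstructions, and invoke Theorem~\ref{changyu}. The paper evaluates directly at $t_i=\theta^{p^{-k_i}}$ (negative $p$-power specializations), which forces it to work with $p^{-k}$-th roots of zeta values; your ``shift by $\mu^M$, then evaluate at $t_i=\theta^{p^{k_i}}$'' is the Frobenius-conjugate of that move and keeps all exponents integral, which is tidier. You are also more explicit than the paper about the monomial-injectivity step (using tameness and base-$p$ uniqueness to see that distinct multi-indices $\underline{i}$ give distinct monomials in the Chang--Yu generators), whereas the paper only asserts this is ``easy to see''; conversely, the paper short-circuits your Vandermonde step by using Zariski density to pick a single good evaluation point up front. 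These are cosmetic differences: the two arguments are interchangeable.
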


It is easy to see that, if $s=1$, $\zeta_A(1;1)$ and $\widetilde{\pi}$ are regularly $\mu$-dependent.
Indeed, by the formula 
$$\zeta_A(1;1)=\frac{\widetilde{\pi}}{(\theta-t)\omega}$$ that can be found in \cite{PEL},
we see that the regular $\mu$-polynomial 
$$P(X_1,X_2)=(\theta^q-t)\mu^e(X_2)-X_1^{q-1}X_2\in K_1[X_1,X_1]_\mu$$
satisfies $(\widetilde{\pi},\zeta(1;1))\in Z(P)$.

\begin{proof}[Proof of Proposition \ref{simpleproposition}]
Assume by contradiction that the statement is false. Then there exists
a non-trivial relation $P(f_0,f_1,f_3\ldots,f_p)=0$ with $P\in K(t_1,\ldots,t_s)[\underline{X}]_\mu$ tame and non-zero, with $f_0=\widetilde{\pi},f_1=\zeta_A(1),f_3=\zeta_A(1;2),\ldots,f_p=\zeta_A(1;p-1)$
(this time, $\underline{X}=(X_0,X_1,X_3,\ldots,X_{p-1})$ and note that $X_2$ is missing because it corresponds to $\zeta_A(1;1)$ that we have discarded). Since the subset $$\{(\theta^{p^{-k_1}},\ldots,\theta^{p^{-k_{p-1}}});\underline{k}=(k_1,\ldots,k_{p-1})\in\NN^{p-1}\}\subset\CC_\infty^{p-1}$$
is Zariski-dense, there exists a subset $\mathcal{I}$ of $\NN^{q-1}$ (of positive density)
such that the evaluation $\operatorname{ev}_{\underline{k}}(c_{\underline{i}})$ of a coefficient $c_{\underline{i}}$ of $P$ at the point $$(t_1,\ldots,t_{p-1})=(\theta^{p^{-k_1}},\ldots,\theta^{p^{-k_{p-1}}})$$
for all $\underline{k}=(k_1,\ldots,k_{p-1})\in\mathcal{I}$ is non-zero. We choose such a $p$-tuple.
Further we have, for $m\geq 0$, $n>0$, $0\leq s'\leq p-1$ and $k:=\max\{k_1,\ldots,k_{p-1}\}$, 
$$\operatorname{ev}_{\underline{k}}(\mu^m(\zeta_A(n;s')))=
\zeta_A(n-p^{-m-k_1}-\cdots-p^{-m-k_{s'}}))^{p^m}=\zeta_A(p^{m+k}n-p^{k-k_1}-\cdots-p^{k-k_{s'}})^{p^{-k}}.$$
Now, it is easy to see, due to our simple choice of parameters (especially the fact that $s'\leq p-1$ and $s'\neq1$), that our assumption yields 
a non-trivial algebraic dependence relation among $\widetilde{\pi}$ and the
values $$\zeta_A\left(p^{m+k}-\sum_{i=1}^{s'}p^{k-k_i}\right),\quad m\geq 0,\quad 0\leq s'\leq p-1,s'\neq 1,$$ where $k:=\max\{k_1,\ldots,k_{p-1}\}$. First of all, we 
can choose $\underline{k}=(k_1,\ldots,k_{p-1})$ so that, for all $i$, $k_i>0$.
This implies that for all $m$ and $s'$, the integer $$n_{m,s'}:=p^{m+k}-\sum_{i=1}^{s'}p^{k-k_i}$$ is positive. Further, for all such $m,s'$, we have that
$n_{m,s'}\not\equiv0\pmod{p-1}$; this already avoids the existence of a relation of the Euler-Carlitz-type (\ref{Carlitz}). In order to avoid
the degenerate sum shuffle relations (\ref{trivialsumshuffle}) we need to 
show that $n_{m,s'}=n_{m',s''}p^j$ for $1\leq s',s''\leq p-1$ and $j,m,m'\geq 0$
implies $m=m'$, $j=0$ and $s'=s''$. But assuming that an identity 
$$p^{m+k}-\sum_{i=1}^{s'}p^{k-k_i}=p^j\left(p^{m'+k}-\sum_{i=1}^{s''}p^{k-k_i}\right)$$
holds with $j\geq 0$ implies $j=0$. Indeed, there exists some $i$ such that $k_i=k$,
and the set of such indices $i$, non-empty, has cardinality $<p$, so that the left-hand side 
is not divisible by $p$.
Note that the condition $s',s''\leq p-1$ is crucial. The conclusion follows from Chang and Yu's Theorem \ref{changyu}.
\end{proof}

In a similar vein, we have the next Lemma.
\begin{Lemma}\label{acounterexample}
Supposing that $p=q>2$, we have that $f=\sum_{a\in A^+}a(t)^2a^{-p}\in\TT$ is 
transformally $\mu$-transcendental, hence regularly $\mu$-transcendental over $K_1$.
\end{Lemma}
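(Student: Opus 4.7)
The plan is to contradict Chang and Yu's algebraic independence Theorem~\ref{changyu}. Suppose for contradiction that $f$ satisfies a nontrivial polynomial relation $Q(f,\mu(f),\ldots,\mu^N(f))=0$ with $Q\in K_1[X_0,\ldots,X_N]$; after clearing denominators we may assume $Q\in\FF_p[\theta,t][X_0,\ldots,X_N]\setminus\{0\}$. Because every $a\in A^+$ has coefficients in $\FF_p$, the operator $\mu$ fixes $a(t)$ and sends $a$ to $a^p$, so
$$\mu^j(f)=\sum_{a\in A^+}\frac{a(t)^2}{a^{p^{j+1}}},\qquad j\ge 0,$$
and Gauss-norm estimates show that this series lies in the Tate subalgebra of analytic functions on $|t|<q^{p/2}$. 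The hypothesis $p>2$ places every point $\xi_k:=\theta^{p^{-k}}\in K_\infty^{\mathrm{ac}}\subset\CC_\infty$ (for $k\ge 1$) inside this disk, so evaluation at $\xi_k$ defines a $\CC_\infty$-algebra morphism there.

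The key identity is that, since $a\in\FF_p[\theta]$ forces $a(\xi_k)=a^{p^{-k}}$, one computes
$$\mu^j(f)(\xi_k)^{p^k}=\sum_{a\in A^+}a^{\,2-p^{k+j+1}}=\zeta_A\!\left(p^{k+j+1}-2\right).$$
I will then specialize the relation $Q(f,\mu(f),\ldots,\mu^N(f))=0$ at $t=\xi_k$ and raise the result to the $p^k$-th power. If $Q=\sum_{\underline{i}}c_{\underline{i}}(\theta,t)X^{\underline{i}}$, the Frobenius transforms the coefficient $c_{\underline{i}}(\theta,\xi_k)$ into $c_{\underline{i}}(\theta^{p^k},\theta)\in\FF_p[\theta]$ (since $\xi_k^{p^k}=\theta$ and coefficients of $c_{\underline{i}}$ lie in $\FF_p$), yielding the polynomial identity
$$\sum_{\underline{i}}c_{\underline{i}}(\theta^{p^k},\theta)\prod_{j=0}^{N}\zeta_A\!\left(p^{k+j+1}-2\right)^{i_j}=0$$
over $K=\FF_p(\theta)$. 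Since $p^{k+j+1}-2\equiv -2\pmod p$ and $\equiv -1\pmod{p-1}$, both nonzero when $p>2$, Theorem~\ref{changyu} asserts that $\zeta_A(p^{k+1}-2),\ldots,\zeta_A(p^{k+N+1}-2)$ are algebraically independent over $K$; hence the relation above must be trivial.

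The main technical step is to verify that the specialization followed by Frobenius does not destroy the information of $Q$, i.e.\ that some $c_{\underline{i}}(\theta^{p^k},\theta)$ is nonzero. The plan is to choose $k$ so that $p^k>\deg_t Q$: the kernel of the $\FF_p$-algebra map $\FF_p[T_1,T_2]\to\FF_p[\theta]$ defined by $T_1\mapsto\theta^{p^k}$, $T_2\mapsto\theta$ is the principal ideal $(T_1-T_2^{p^k})$, and no nonzero polynomial of $T_2$-degree strictly less than $p^k$ can be a multiple of $T_1-T_2^{p^k}$ (comparing $T_2$-strata after the substitution $T_1=T_2^{p^k}$). This forces some coefficient to survive, producing a nontrivial $K$-algebraic relation among the $\zeta_A(p^{k+j+1}-2)$ and the desired contradiction. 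Thus $f$ is transformally $\mu$-transcendental over $K_1$, and the regular $\mu$-transcendence is automatic since any regularly $\mu$-dependent element is transformally $\mu$-dependent.
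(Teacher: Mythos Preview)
Your proof is correct and follows the same overall strategy as the paper's: specialize the assumed $\mu$-polynomial relation to produce an algebraic dependence among Carlitz zeta values of the shape $\zeta_A(p^m-2)$, then contradict Chang--Yu's Theorem~\ref{changyu}. The only difference is the choice of specialization point. The paper evaluates directly at $t=\theta$ (which is admissible since $p>2$ forces $|\theta|<q^{p/2}$), after first clearing denominators and dividing out the highest power of $t-\theta$ from the coefficients so that the specialized relation is nontrivial; this gives $\mu^j(f)|_{t=\theta}=\zeta_A(p^{j+1}-2)$ immediately, with no Frobenius twist needed. Your detour through $t=\theta^{1/p^k}$ followed by raising to the $p^k$-th power is a legitimate alternative, and your kernel computation for $\FF_p[T_1,T_2]\to\FF_p[\theta]$ correctly plays the role of the paper's $(t-\theta)$-normalization, but the direct evaluation at $t=\theta$ is the more economical route.
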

\begin{proof}
We assume by contradiction that  there is a non-trivial relation 
of algebraic dependence
$$\sum_{\underline{i}\in\NN^k}c_{\underline{i}}f^{i_0}\mu(f)^{i_1}\cdots\mu^{k-1}(f)^{i_{k-1}},\quad 
c_{\underline{i}}\in K_1$$ (the sum is finite). We can suppose, without loss of generality, that 
all the coefficients $c_{\underline{i}}$ are in $A[t]$ and that they are not all divisible 
by $t-\theta$. This means that the evaluation at $t=\theta$ of the coefficients $c_{\underline{i}}$
yields a non-zero vector with entries in $A$.
 Then observing that the evaluation at $t=\theta$ of $\mu^j(f)$
is equal to the Carlitz zeta value $\zeta_A(p^{j+1}-2)$, we have thus a non-trivial 
relation of algebraic dependence with coefficients in $A$ of the Carlitz zeta values
$\zeta_A(p^j-2)$ for $j=1,\ldots,k$. Since $p-1,p\nmid p^j-2$ for all $j>0$, this 
is again in contradiction with Chang and Yu's Theorem \ref{changyu}.
\end{proof}

\subsubsection{Two more conjectures}

Additionally, we propose the following, for general $q$:

\begin{Conjecture}
Any finite subset of the set whose elements are $\widetilde{\pi},\zeta_A(n;s)$ with $p\nmid n$
and $n\not\equiv s\pmod{q-1}$ is regularly $\mu$-independent over $K_s$. 
\end{Conjecture}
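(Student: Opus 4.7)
The plan is to extend the conditional proof of Theorem \ref{avariant} from weight $n=1$ to arbitrary weight $n\ge 1$, taking Conjecture \ref{conjecturefunctional} as input. Fix a finite collection $\widetilde{\pi},\zeta_A(n_1;s_1),\ldots,\zeta_A(n_N;s_N)$ with $p\nmid n_j$ and $n_j\not\equiv s_j\pmod{q-1}$, and choose $s_0\ge\max_j s_j$ so everything lives in $\TT_{s_0}\subset\KK_{s_0}$. For each $j$ I would produce an element $f_j\in\KK_{s_0}$ that differs from $\zeta_A(n_j;s_j)$ by a tamely $\mu$-algebraic factor and whose image $g_j=\exp_C(f_j)$ is again tamely $\mu$-algebraic over $K_{s_0}$; both the multipliers and the $g_j$'s then sit in $L_{s_0}$, and linear independence of $\widetilde{\pi},f_1,\ldots,f_N$ over $\FF_q(t_1,\ldots,t_{s_0})[\theta]$ together with Conjecture \ref{conjecturefunctional} forces their regularly $\mu$-independence over $L_{s_0}$, which transfers back to the zeta values.

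The first step is a higher-weight analog of Theorem \ref{theorem}. For $n=1$ one multiplies by $\omega_s$ and obtains $\exp_C(\omega_s\zeta_A(1;s))\in\omega_s A[t_1,\ldots,t_s]$. For general $n$, I expect the Anderson--Thakur polynomial machinery to supply a multiplier $\Omega_{n,s}$ built from $\omega(t_1),\ldots,\omega(t_s)$ and their $\tau$-translates, such that $\exp_C(\Omega_{n,s}\zeta_A(n;s))$ lies in a finitely generated $A[t_1,\ldots,t_s]$-submodule of $\TT_{s_0}$ whose generators satisfy tame linear $\mu$-equations. The factor $\Omega_{n,s}$ is itself tamely $\mu$-algebraic since each $\omega(t_i)$ satisfies $\mu^e(\omega(t_i))=(t_i-\theta)\omega(t_i)$. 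The condition $n_j\not\equiv s_j\pmod{q-1}$ should correspond exactly to the non-Eulerian case (extending the dichotomy of Theorem \ref{theorem}), while $p\nmid n_j$ is what prevents the trivial $\mu$-shift $\zeta_A(pn;s)=\mu(\zeta_A(n;s))$ from forcing an automatic regularly $\mu$-dependence.

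The second step is $\FF_q(t_1,\ldots,t_{s_0})[\theta]$-linear independence of $\widetilde{\pi},f_1,\ldots,f_N$. This follows the polar-divisor template of the proof of Theorem \ref{avariant}: each $f_j$ extends meromorphically in the $t$-variables to $\CC_\infty^{s_0}$ with a polar divisor determined by $(n_j,s_j)$, the nonvanishing of $\zeta_A(n_j;s_j)$ at specialization points $t_i=\theta^{q^{k_i}}$ (by the same Goss positivity argument already used in the proofs of Theorem \ref{avariant} and Proposition \ref{simpleproposition}) guarantees these poles are genuine, and their combinatorial embedding pattern across distinct $(n_j,s_j)$ rules out any nontrivial $A[t_1,\ldots,t_{s_0}]$-linear relation. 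Applying Conjecture \ref{conjecturefunctional} to $\widetilde{\pi},f_1,\ldots,f_N$, whose exponentials all lie in $L_{s_0}$, forces the $N+1$ preimages themselves to be regularly $\mu$-independent over $L_{s_0}$; dividing through by the multipliers $\Omega_{n_j,s_j}\in L_{s_0}$ and using $K_{s_0}\subset L_{s_0}$ yields the desired conclusion.

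The hard part is the first step. Constructing $\Omega_{n,s}$ and verifying that $\exp_C(\Omega_{n,s}\zeta_A(n;s))$ is tamely $\mu$-algebraic for $n\ge 2$ requires the correct higher-weight formula built from Anderson--Thakur polynomials evaluated at tuples of $t$-variables, and one must match its vanishing locus precisely with the congruence $n\equiv s\pmod{q-1}$ while controlling the $p$-adic shape of the exponents so that the resulting $\mu$-equation is tame rather than merely regular. Once such a formula is in hand the rest of the argument is essentially mechanical, but it is exactly here that any serious attempt to prove the conjecture, even conditional on Conjecture \ref{conjecturefunctional}, must concentrate its effort.
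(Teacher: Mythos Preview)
The paper does not prove this statement at all: it is recorded as a bare conjecture, and immediately after stating it the author writes that it ``does not seem to be a consequence of Conjecture~\ref{conjecturefunctional} and is probably also quite a difficult one.'' So there is no proof in the paper to compare against; what you are proposing is a conditional argument that the author has already considered and explicitly declined to claim.

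The obstruction is precisely your ``first step,'' and it is not merely hard but structurally unavailable. Theorem~\ref{theorem} exists because $\zeta_A(1)$ is a Carlitz logarithm: $\exp_C(\zeta_A(1))=1$, and the Tate-algebra version follows the same pattern. For $n\ge 2$, however, $\zeta_A(n)$ is a period of the $n$-th tensor power $C^{\otimes n}$ of the Carlitz module (Anderson--Thakur), not of $C$ itself; the exponential that uniformizes it is $\exp_{C^{\otimes n}}$, not $\exp_C$. There is no known identity of the shape $\exp_C(\Omega_{n,s}\,\zeta_A(n;s))\in(\text{tamely }\mu\text{-algebraic})$ for $n\ge 2$, and there is no reason to expect one: the Anderson--Thakur machinery produces relations inside the tensor-power modules, which lie outside the scope of Conjecture~\ref{conjecturefunctional} as formulated (that conjecture concerns only $\exp_C$). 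This is why the author says the conjecture does not appear to follow from Conjecture~\ref{conjecturefunctional}, and it is why your template from Theorem~\ref{avariant} cannot simply be promoted to higher weight. A conditional proof would require either a Schanuel-type conjecture for the whole family $\exp_{C^{\otimes n}}$, or a genuinely new mechanism linking $\zeta_A(n;s)$ for $n\ge 2$ back to the Carlitz exponential; neither is supplied here.
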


This conjecture does not seem to be a consequence of Conjecture \ref{conjecturefunctional} and is
probably also quite a difficult one. In order to present something which is perhaps provable in the near future, we mention that
Conjecture \ref{conjecturefunctional} implies, for $n=1$, the following conjecture:
\begin{Conjecture}\label{hermitelindemann}
If $f\in\mathbb{K}_s\setminus\{0\}$, then either $f$ or $\exp_C(f)$ is regularly $\mu$-transcenden\-tal over $K_s$.
\end{Conjecture}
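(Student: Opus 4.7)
The plan is to deduce Conjecture \ref{hermitelindemann} from Conjecture \ref{conjecturefunctional} in the $n=1$ case, combined with the closure properties built into the countable field $L_s$. Fix $f \in \KK_s \setminus \{0\}$ and set $g = \exp_C(f) \in \KK_s$. Since $\FF_q(t_1,\ldots,t_s)[\theta]$ is an integral domain and $f \neq 0$, the singleton $\{f\}$ is automatically $\FF_q(t_1,\ldots,t_s)[\theta]$-linearly independent, so the hypothesis of Conjecture \ref{conjecturefunctional} is satisfied with $n=1$. Applying the conjecture I obtain: at least one of $f$ or $g$ is regularly $\mu$-independent, and for a singleton this is the same as being regularly $\mu$-transcendental over $L_s$.

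The second step is to transfer this transcendence statement from $L_s$ down to $K_s$. I proceed contrapositively: suppose that neither $f$ nor $g$ is regularly $\mu$-transcendental over $K_s$, so that both are regularly $\mu$-algebraic over $K_s$. By the very definition of $L_s$ as the smallest $\mu$-difference subfield of $\KK_s$ containing $K_s$ together with every $h \in \KK_s$ regularly $\mu$-algebraic over $K_s$, this forces $f, g \in L_s$. But for any $h \in L_s$ the depth-zero $\mu$-polynomial $X - h \in L_s[X]_\mu$ is non-zero, vanishes at $h$, and, by Lemma \ref{noncritical}, is analytically regular. Hence $f$ and $g$ are both regularly $\mu$-algebraic over $L_s$, in direct contradiction with what the application of Conjecture \ref{conjecturefunctional} produced.

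Therefore at least one of $f$ or $\exp_C(f)$ must be regularly $\mu$-transcendental over $K_s$, which is precisely the content of Conjecture \ref{hermitelindemann}. I do not expect any genuine obstacle in carrying out this deduction: the argument is a formal unwinding of the definitions of $L_s$ and of regular $\mu$-(trans)cendence, supplemented only by the observation that polynomials of depth zero are analytically regular. The whole mathematical content of Conjecture \ref{hermitelindemann} is thus absorbed into the hypothesis Conjecture \ref{conjecturefunctional}; the only delicate point to double-check along the way is that no additional $\FF_q(t_1,\ldots,t_s)[\theta]$-linear dependence condition is hidden in the $n=1$ case, but as noted this collapses to the non-vanishing of $f$.
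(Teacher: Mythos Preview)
Your deduction is correct and matches exactly what the paper asserts without details: Conjecture~\ref{hermitelindemann} is the $n=1$ instance of Conjecture~\ref{conjecturefunctional}. One minor simplification: your second step, passing from $L_s$ down to $K_s$, is immediate because $K_s\subset L_s$ and the notion of analytic regularity is intrinsic to $\KK_s[\underline{X}]_\mu$; hence any analytically regular $P\in K_s[X]_\mu$ is already an analytically regular element of $L_s[X]_\mu$, so regular $\mu$-transcendence over $L_s$ directly implies regular $\mu$-transcendence over $K_s$ without the contrapositive detour through $X-h$.
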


The author presently does not know if this result can be directly deduced from the analogue 
of the theorem of Hermite-Lindemann for the Carlitz exponential over $\CC_\infty$ of \cite{THI}.

\section{Transcendence degree of difference subfields}\label{transcendencedegree}

Let $\underline{f}=(f_1,\ldots,f_n)$ be an element of $\KK_s^n$. 
We suppose that for all $i$, $g_i=\exp_C(f_i)\in\KK_s$ are regularly $\mu$-algebraic. We address the following loosely question.
\begin{Question} Assume that $s>0$. Compute
the transcendence degree over $K_s$ of the $\mu$-field 
$$K_s(g_1,\ldots,g_n)_\mu$$ from the knowledge of $f_1,\ldots,f_n$.\end{Question}

Answering is likely to be difficult. The transcendence degree behaves wildly if $s>0$ and does not seem to
be in transparent (conjectural) relation with the linear forms over $\FF_q(t)[\theta]$ satisfied by the various $f$, as we are going to see. 
From now on, we are going to assume that $s=1$ for commodity. Therefore, we are
going to write $t=t_1$, $\TT=\TT_1$ and $\KK=\KK_1$.

We recall that the $\CC_\infty$-algebra $\TT$ is stable under the action of the divided derivatives
$\mathcal{D}_n$, which are, for all $n\geq 0$, the $\CC_\infty$-linear endomorphisms
uniquely defined by the rule $$\mathcal{D}_nt^m=\binom{n}{m}t^{m-n}.$$

\begin{Proposition}[Analogue of Hölder's Theorem for the function $\omega$]
We set $f_i=\frac{\widetilde{\pi}}{(\theta-t)^{i+1}}$ for $i\geq 0$. The following properties hold:
\begin{enumerate}
\item For all $i\geq 0$, $\exp_C(f_i)=\mathcal{D}_i(\omega)$.
\item The functions $\mathcal{D}_i(\omega)$ define the entries of a solution of the $\tau$-difference system
$$\tau(X_i)=(t-\theta)X_i+X_{i-1},\quad i\geq 0,\quad X_{-1}:=0.$$
\item The functions $\mathcal{D}_i(\omega)$ are algebraically independent over $K(t)$.
More precisely we have, for all $n\geq 0$,
\begin{eqnarray*}
\operatorname{transf}\deg_{K(t)}K(t)(\mathcal{D}_i(\omega):0\leq i\leq n)_\tau&=&0,\\
\operatorname{tr}\deg_{K(t)}K(t)(\mathcal{D}_i(\omega):0\leq i\leq n)_\tau&=&n+1.
\end{eqnarray*}
\end{enumerate}
\end{Proposition}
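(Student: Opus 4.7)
First, a direct termwise computation on the geometric series $1/(\theta-t)=\sum_{k\geq 0}t^k/\theta^{k+1}$ yields $\mathcal{D}_i(1/(\theta-t))=1/(\theta-t)^{i+1}$ in $\TT$, so $f_i=\mathcal{D}_i(\wpi/(\theta-t))$. Since $\tau$ is $\FF_q[t]$-linear and $\mathcal{D}_i$ is continuous and $\CC_\infty$-linear, and since the binomial coefficients $\binom{k}{i}\in\FF_p$ appearing in $\mathcal{D}_i$ are $\tau$-invariant, one checks directly that $\tau\mathcal{D}_i=\mathcal{D}_i\tau$ on $\TT$; hence $\mathcal{D}_i$ commutes with $\exp_C=\sum d_j^{-1}\tau^j$, proving (1) via $\exp_C(f_i)=\mathcal{D}_i(\exp_C(\wpi/(\theta-t)))=\mathcal{D}_i(\omega)$. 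For (2), apply $\mathcal{D}_i$ to $\tau(\omega)=(t-\theta)\omega$: by commutation the left side becomes $\tau(\mathcal{D}_i\omega)$, and the Leibniz rule for divided derivatives, combined with $\mathcal{D}_0(t-\theta)=t-\theta$, $\mathcal{D}_1(t-\theta)=1$, and $\mathcal{D}_j(t-\theta)=0$ for $j\geq 2$, reduces the right side to $(t-\theta)\mathcal{D}_i\omega+\mathcal{D}_{i-1}\omega$. The transformal $\tau$-degree claim follows immediately: the triangular system just produced has coefficients in the $\tau$-stable field $K(t)$, and by induction on $i$ each $\mathcal{D}_i\omega$ satisfies a non-trivial linear $\tau$-algebraic equation over $K(t)$.

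\textbf{Algebraic independence: induction on $n$.} For the base case $n=0$, if $\omega$ were algebraic over $K(t)$ its poles would lie in the finite zero set of the discriminant and leading coefficient of a minimal polynomial, contradicting the infinitude of poles from Proposition \ref{omegagamma}(2). For the inductive step, assume $\omega,\mathcal{D}_1\omega,\ldots,\mathcal{D}_{n-1}\omega$ are algebraically independent over $K(t)$ and set $F:=K(t)(\omega,\ldots,\mathcal{D}_{n-1}\omega)$, which is $\tau$-stable by (2). Suppose for contradiction that $\mathcal{D}_n\omega$ is algebraic over $F$ with monic minimal polynomial $P(X)=X^d+a_{d-1}X^{d-1}+\cdots+a_0\in F[X]$. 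Applying $\tau$ and substituting $\tau(\mathcal{D}_n\omega)=(t-\theta)\mathcal{D}_n\omega+\mathcal{D}_{n-1}\omega$ produces, after dividing by $(t-\theta)^d$, a second monic polynomial $Q(X):=(t-\theta)^{-d}P^\tau((t-\theta)X+\mathcal{D}_{n-1}\omega)\in F[X]$ of degree $d$ annihilating $\mathcal{D}_n\omega$; uniqueness of the minimal polynomial forces $Q=P$. Reading off the coefficient of $X^{d-1}$ yields the inhomogeneous $\tau$-equation $(t-\theta)a_{d-1}-\tau(a_{d-1})=d\,\mathcal{D}_{n-1}\omega$. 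Since $y=-d\mathcal{D}_n\omega$ is a particular solution (by (2) at $i=n$) and the kernel of $y\mapsto\tau(y)-(t-\theta)y$ on $\KK_1$ is the one-dimensional $\FF_q(t)$-space $\FF_q(t)\omega$, we deduce $a_{d-1}+d\mathcal{D}_n\omega\in\FF_q(t)\omega\subset F$, hence $d\,\mathcal{D}_n\omega\in F$. When $p\nmid d$ this forces $\mathcal{D}_n\omega\in F$, and by minimality $d=1$.

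\textbf{The main obstacle.} The argument is thus reduced to eliminating the two residual cases $d=1$ (i.e., $\mathcal{D}_n\omega\in F$) and $p\mid d$, which I expect to be the most delicate step. The natural strategy is to exploit the explicit pole structure: by Proposition \ref{omegagamma}(2) and part (1), each $\mathcal{D}_i\omega$ has a pole of order exactly $i+1$ at every $t=\theta^{q^k}$, with leading Laurent coefficient computable from $\operatorname{Res}_{t=\theta^{q^k}}(\omega)$. Writing $\mathcal{D}_n\omega=R(\omega,\ldots,\mathcal{D}_{n-1}\omega)$ for some rational $R\in K(t)(X_0,\ldots,X_{n-1})$ and grading the polynomial ring by $\deg X_i:=i+1$, the top-weight homogeneous component of this relation at the pole $t=\theta$ yields a polynomial identity in $\wpi$ with coefficients in $K(t)$; since $\wpi$ is transcendental over $K(t)$, this already imposes a structural constraint on $R$, and iterating the analysis at the remaining poles $t=\theta^{q^k}$ (whose residues vary in a controlled but transcendental way) should contradict the inductive algebraic independence of $\omega,\ldots,\mathcal{D}_{n-1}\omega$. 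The case $p\mid d$ would be handled by comparing successive coefficients $a_{d-2},a_{d-3},\ldots$ of $P$ and iterating the descent until reducing to the $d=1$ case.
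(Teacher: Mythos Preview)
Your arguments for (1), (2), and the vanishing of the transformal degree are correct and essentially coincide with the paper's, which simply invokes the commutation $\exp_C\frac{d}{dt}=\frac{d}{dt}\exp_C$ on $\TT$ together with the $\tau$-difference equation for $\omega$.

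For the algebraic independence in (3), however, you leave a genuine gap: the cases $d=1$ and $p\mid d$ are not resolved, and the pole-weighting strategy you outline is only a sketch. You do not verify that the top-weight expansion at $t=\theta$ actually produces a non-trivial polynomial identity in $\wpi$, nor do you carry out the descent on successive coefficients in the $p\mid d$ case. The paper sidesteps this obstacle by a different choice of ground field. Rather than taking the monic minimal polynomial of $\mathcal{D}_n\omega$ over the large field $F=K(t)(\omega,\ldots,\mathcal{D}_{n-1}\omega)$, it takes an \emph{irreducible} polynomial $P\in K(t)[X_0,\ldots,X_n]$ vanishing at $(\omega,\ldots,\mathcal{D}_n\omega)$; by the inductive hypothesis the prime ideal of relations has height one, hence is principal, and $P$ generates it. Applying $\tau$ and substituting via the triangular system $X_i\mapsto(t-\theta)X_i+X_{i-1}$ yields another element of $(P)$ of the same total degree, hence a scalar multiple $cP$ with $c\in K(t)^\times$. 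Comparing the homogeneous components of highest and of lowest total degree of $P$ then forces $c$ to carry two incompatible powers of $(t-\theta)$, which is the contradiction. Because this argument stays over $K(t)$ and works purely with total degrees, it never sees the coefficient $a_{d-1}$ at all and thus never meets the $p\mid d$ obstruction that stalls your approach.
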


\begin{proof}
The first two properties follow easily from the fact that, over $\TT$,
$$\exp_C\frac{d}{dt}=\frac{d}{dt}\exp_C,$$ and from the difference equation
(\ref{differenceomega}). For (3), since $\omega$ is transcendental over $K(t)$
we can argue by induction and suppose that $\omega,\mathcal{D}_1(\omega),\ldots,\mathcal{D}_{n-1}(\omega)$ are algebraically independent over $K(t)$. Let us suppose by contradiction
that $\mathcal{D}_{n}(\omega)$ is algebraically dependent of the previous functions;
then there exists a non-zero irreducible polynomial $P$ of $K(t)[X_0,\ldots,X_n]$
which vanishes at the point determined by the functions $\omega,\ldots,\mathcal{D}_n(\omega)$.
The property (2) of the proposition implies that, if we denote by $P^\tau$ the polynomial 
of $K(t)[X_0,\ldots,X_n]$ obtained by letting $\tau$ act on the coefficients,
Then $P\mid P^\tau$ and in fact, $P^\tau=(t-\theta)^{d}P$ with $d$ the total degree of $P$ 
in its indeterminates, looking at the homogeneous part of highest degree of $P$. But looking at the monomials of smallest total degree of $P$, we
see that this is impossible. The property concerning the transformal dependence is easy and left to the reader.
\end{proof}
Hence, the linear relations over $\FF_q(t)[\theta]$
of $f_1,\ldots,f_n\in\KK$ do not have much to say about the transcendence degree over $K(t)$
of the $\tau$-difference field generated by $g_1=\exp_C(f_1),\ldots,g_n=\exp_C(f_n)$.

\subsection{Solving linear difference equations in $\KK$}

We are going to discuss how the Carlitz exponential $\exp_C$ can be used to solve 
the equations $$\tau^{-1}(X)=X+g,\quad g\in\KK.$$
\begin{Lemma}\label{solving}
Let $g$ be an element of $\KK$. Let $v\in\KK$ be such that
$$\exp_C(v)=-\tau(g)(t-\theta)\omega.$$
Then all the solutions $x\in\KK$ of the equation
$$\tau^{-1}(X)=X+g$$
are the elements of the set
$$\mathcal{T}(g):=\omega^{-1}\exp_C\left(\frac{v}{\theta-t}\right)+\FF_q(t).$$ 
\end{Lemma}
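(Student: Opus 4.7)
The plan is to linearize the equation by exploiting the functional equation of $\omega$. First I would apply $\tau$ to the equation $\tau^{-1}(X) = X + g$ to obtain the equivalent form $\tau(X) - X = -\tau(g)$, and then perform the change of unknown $X = \omega^{-1}Y$. Using $\tau(\omega) = (t-\theta)\omega$ (equation (\ref{differenceomega})), hence $\tau(\omega^{-1}) = ((t-\theta)\omega)^{-1}$, and clearing denominators by multiplying through by $(t-\theta)\omega$, the equation becomes
$$\tau(Y) + (\theta-t)Y = -\tau(g)(t-\theta)\omega,$$
which is precisely $C_{\theta-t}(Y) = \exp_C(v)$ by the very definition of $v$.

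Next I would exhibit a particular solution using that $\exp_C : \KK \to C(\KK)$ is a morphism of $A[t]$-modules (Proposition \ref{propogenerals}): since $(\theta-t)\cdot\frac{v}{\theta-t} = v$, one has
$$C_{\theta-t}\!\left(\exp_C\!\left(\frac{v}{\theta-t}\right)\right) = \exp_C(v),$$
so $Y_0 := \exp_C(v/(\theta-t))$ is a particular solution in $\KK$. Note that the existence of $v$ itself is guaranteed by the surjectivity of $\exp_C$ on $\KK$, also furnished by Proposition \ref{propogenerals}. Any other solution $Y\in\KK$ differs from $Y_0$ by an element $Z$ lying in the kernel of $C_{\theta-t}$, i.e.\ satisfying $\tau(Z) = (t-\theta)Z$.

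The whole argument then reduces to identifying this kernel in $\KK$, which is the only non-routine step. Mimicking the end of the proof of Proposition \ref{propogenerals}, one uses $\KK^{\tau=1} = \FF_q(t)$ together with the fact that $\exp_C$ induces an $\FF_q(t)$-linear isometry on the disk $\{f\in\KK : v_\infty(f) > -q/(q-1)\}$, to conclude that the kernel of $C_{\theta-t}$ in $\KK$ is the one-dimensional $\FF_q(t)$-space spanned by $\omega = \exp_C(\widetilde{\pi}/(\theta-t))$. Hence $Z = c\omega$ with $c\in\FF_q(t)$, and back-substituting $X = \omega^{-1}Y$ gives
$$X = \omega^{-1}\exp_C\!\left(\frac{v}{\theta-t}\right) + c \in \mathcal{T}(g),$$
exactly the description claimed. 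The delicate ingredient is therefore the identification of $\ker C_{\theta-t}$ in the full field $\KK$ (as opposed to $\TT$, where the result for $\omega$ is already recorded), but this is handled by the same technique already used in Proposition \ref{propogenerals}.
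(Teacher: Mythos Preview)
Your argument is correct and is essentially the paper's proof, just reorganized: the paper computes $\tau(f_v)$ directly, while you make the substitution $Y=\omega X$ explicit, but both unwind to the same use of $C_{\theta-t}(\exp_C(v/(\theta-t)))=\exp_C(v)$ together with $\tau(\omega)=(t-\theta)\omega$. One simplification: for the homogeneous part you do not need to invoke the isometry argument from Proposition~\ref{propogenerals} to identify $\ker C_{\theta-t}$; since two solutions of $\tau(X)=X-\tau(g)$ differ by an element of $\KK^{\tau=1}=\FF_q(t)$ (equivalently, $\tau(Z)=(t-\theta)Z$ means $Z/\omega\in\KK^{\tau=1}$), the additive constant in $\FF_q(t)$ is immediate.
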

\begin{proof}
All we need to show is that $f_v:=\omega^{-1}\exp_C\left(\frac{v}{\theta-t}\right)$ is a solution
of our equation. But:
\begin{eqnarray*}
\tau(f_v)&=&\tau\left(\exp_C\left(\frac{v}{\theta-t}\right)\right)\tau(\omega)^{-1}\\
&=&\left(C_{\theta-t}\left(\exp_C\left(\frac{v}{\theta-t}\right)\right)-(\theta-t)\exp_C\left(\frac{v}{\theta-t}\right)\right)\tau(\omega)^{-1}\\
&=&\left(\exp_C(v)-(\theta-t)\exp_C\left(\frac{v}{\theta-t}\right)\right)((t-\theta)\omega)^{-1}\\
&=&f_v+\frac{\exp_C(v)}{(t-\theta)\omega}\\
&=&f_v-\tau(g),
\end{eqnarray*}
and the equation $\tau(X)=X-\tau(g)$ has the same set of solutions as our equation.
\end{proof}
\begin{Remark}
{\em We have that $$\mathcal{T}(g)=\omega^{-1}\exp_C\frac{1}{\theta-t}\exp_C^{-1}((\theta-t)\omega\tau(g)).$$}
\end{Remark}
We define two towers of field extensions of $K^{ac}(t)$, inductively. The first 
one is defined by $L_0=K^{ac}(t)$ and, for all $i$, 
$$L_i=L_{i-1}(\omega)(f\in\KK:\tau^{-1}(f)=f+\omega^lg:l\in\ZZ,g\in L_{i-1}).$$
For the second one, we set
again, $M_0=K^{ac}(t)$ and then inductively,
$$M_i=M_{i-1}(\cup_{l\in\ZZ}\exp_C(M_{i-1}\omega^l)\cup\cup_{l'\in\ZZ}\exp_C^{-1}(M_{i-1}\omega^{l'})).$$
Then writing $L_\infty=\cup_iL_i$ and $M_\infty=\cup_iM_i$, we obtain
$L_\infty\subset M_\infty$ as a consequence of Lemma \ref{solving}.
In the next subsection, we are going to observe that the entries of the rigid analytic trivializations
associated to {\em Chang's multiple polylogarithms}  (as in \cite{CHA1}) are all in the field $L_\infty$.

\subsection{Polylogarithm $t$-motives}

An important feature of the entire function $$\Omega:=\tau(\omega)^{-1}$$ is highlighted in 
the paper of Anderson, Brownawell and Papanikolas \cite{ABP} and
in the paper of Papanikolas \cite{PAP}: $\Omega$ is a {\em rigid
analytic trivialization} of the {\em Carlitz dual $t$-motive} $C$. The reader can consult \cite[\S 1.1.2 and \S 3.4]{PAP} for the definition and the basic properties of {\em Anderson (dual) $t$-motives} (note that the operator $\tau^{-1}$ is used therein).
The basic properties of the function $\Omega$ and of the Carlitz $t$-motive
are discussed in \cite[\S 3.1.2]{ABP} and \cite[\S 3.4.3, \S 3.3.4]{PAP},
while the notion of rigid analytic trivializations of $t$-motives
is discussed in \cite[\S 1.1.3 and \S 3.3]{PAP}.

In essence, the rigid analytic triviality of an object $M$ of the Tannakian category $\mathcal{T}$ over $\FF_q(t)$ of {\em $t$-motives} (containing the category of Anderson's $t$-motives and introduced by Papanikolas, see \cite[\S 1.1.6 and \S 3]{PAP}) allows one to realize $M$ as the solutions $\Psi\in\GL_r(\mathbb{K})$ (for some $r$) of a linear $\tau^{-1}$-difference system
 \begin{equation}\label{rigantriv}\tau^{-1}(\Psi)=\Phi\Psi,\end{equation}
 with $\Phi\in\GL_r(\CC_\infty(t))$.

Let $\Psi$ be a rigid analytic trivialization associated to a multiple polylogarithm
as in \cite{CHA1}, that is, a matrix $\Psi\in\operatorname{GL}_{d+1}(\KK)$
satisfying the $\tau^{-1}$-difference system
(\ref{rigantriv})
with $\Phi\in\operatorname{GL}_{d+1}(K^{ac}(t))$ as in \cite[(5.3.3)]{CHA1}
with $s_1,s_2,\ldots,s_d\in\ZZ_{>0}$ and $Q_1,\ldots,Q_d\in K^{ac}(t)$.
We shall prove:
\begin{Proposition}\label{inclusioninlinfty}
The entries of $\Psi$ belong to $L_\infty$.
\end{Proposition}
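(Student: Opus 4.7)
The plan is to peel off the entries of $\Psi$ diagonal by diagonal, exploiting the lower-triangular shape of the Frobenius matrix $\Phi$ given in Chang \cite[(5.3.3)]{CHA1}. Writing $n_i := s_i + s_{i+1} + \cdots + s_d$ (with $n_{d+1} := 0$), that $(d+1)\times(d+1)$ matrix has diagonal entries $(t-\theta)^{n_i}$, subdiagonal entries $Q_{i-1}^{(-1)}(t-\theta)^{n_{i-1}}$, and zeroes elsewhere. Reading off entries of $\tau^{-1}(\Psi) = \Phi\Psi$, one obtains for $i \geq j$ the recursion
\begin{equation*}
\tau^{-1}(\Psi_{ij}) \;=\; (t-\theta)^{n_i}\,\Psi_{ij} + Q_{i-1}^{(-1)}(t-\theta)^{n_{i-1}}\,\Psi_{i-1,j},
\end{equation*}
with the convention that the second term is absent when $i = j$.

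First I would handle the diagonal. Since $\tau(\omega) = (t-\theta)\omega$ implies $\tau^{-1}(\Omega) = (t-\theta)\Omega$ for $\Omega = \tau(\omega)^{-1}$, the equation $\tau^{-1}(\Psi_{jj}) = (t-\theta)^{n_j}\Psi_{jj}$ forces $\Psi_{jj} = c_j\,\Omega^{n_j}$ for some $c_j \in \FF_q(t) = \KK^{\tau=1}$. Since $\Omega = (t-\theta)^{-1}\omega^{-1} \in K^{ac}(t)(\omega) \subset L_1$ and $\FF_q(t) \subset L_0$, each diagonal entry already lies in $L_1 \subset L_\infty$.

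Next, I would induct on $k := i - j \geq 0$ to show $\Psi_{ij} \in L_{k+1}$. The base case $k = 0$ is the diagonal computation above. For the inductive step, substitute $\Psi_{ij} = \Omega^{n_i} h_{ij}$ in the displayed recursion. Using $\tau^{-1}(\Omega^{n_i}) = (t-\theta)^{n_i}\Omega^{n_i}$ and the identity $\Omega^{-n_i} = (t-\theta)^{n_i}\omega^{n_i}$, a short computation reduces the recursion to
\begin{equation*}
\tau^{-1}(h_{ij}) \;=\; h_{ij} + \omega^{n_i}\, g_{ij},
\qquad
g_{ij} \;:=\; Q_{i-1}^{(-1)}\,(t-\theta)^{s_{i-1}+n_i}\,\Psi_{i-1,j}.
\end{equation*}
By the inductive hypothesis $\Psi_{i-1,j} \in L_k$, and since $Q_{i-1}^{(-1)} \in K^{ac}(t) \subset L_0 \subset L_k$, the forcing term has the precise form $\omega^{l}\,g$ with $l = n_i \in \ZZ$ and $g \in L_k$. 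By the very definition of $L_{k+1}$, every solution $h_{ij} \in \KK$ of such an equation lies in $L_{k+1}$; therefore $\Psi_{ij} = \Omega^{n_i} h_{ij} \in L_{k+1}$, since $\Omega \in L_1 \subset L_{k+1}$. The induction closes once $k$ reaches $d$, and every entry of $\Psi$ is then in $L_{d+1} \subset L_\infty$.

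The main obstacle I anticipate is the bookkeeping of the exponents of $\omega$ and $(t-\theta)$ that arise from the substitution $\Psi_{ij} = \Omega^{n_i} h_{ij}$ and the cancellations that shift the forcing from the $\Omega$-twisted form back to a pure $\omega^l$-times-$L_k$ form matching the definition of $L_{k+1}$; in particular one must check that the $Q_{i-1}^{(-1)}$ factor, which involves a $q$-th root in the variable $\theta$, still lies in the base field $K^{ac}(t)$. A secondary issue is that a rigid analytic trivialization $\Psi$ solving \eqref{rigantriv} is uniquely determined only up to right-multiplication by a matrix in $\GL_{d+1}(\FF_q(t)) \subset \GL_{d+1}(L_0)$; this causes no trouble, because once one specific $\Psi$ has all entries in $L_\infty$, every other rigid analytic trivialization does too.
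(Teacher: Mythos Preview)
Your argument is correct and follows essentially the same route as the paper's proof: the same substitution $\Psi_{ij}=\Omega^{n_i}h_{ij}$ (the paper's $y_{i,j}=x_{i,j}/\Omega^{s_{i+1}+\cdots+s_d}$) reduces the system to equations of the shape $\tau^{-1}(h)=h+\omega^{l}g$ with $g$ in a lower level of the tower, after which membership in $L_\infty$ follows directly from the definition. Your added remarks on the diagonal entries and on the ambiguity of $\Psi$ up to right multiplication by $\GL_{d+1}(\FF_q(t))$ are correct and harmless refinements.
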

\begin{proof}
If we write
$$\Psi=\begin{pmatrix}\Omega^{s_1+\cdots+s_d} & 0 & \cdots & 0 & 0 \\
x_{1,0} & \Omega^{s_2+\cdots+s_d} & \cdots & 0 & 0 \\
x_{2,0} & x_{2,1} & \cdots & 0 & 0 \\
\cdots & \vdots & & \vdots  & \vdots \\
x_{d,0} & x_{d,1} & \cdots & x_{d,d-1} & 1\end{pmatrix},$$
solving the system (\ref{rigantriv}) for this choice of $\Phi$ amounts to solve
the iterated system of $\tau^{-1}$-difference equations
$$\tau^{-1}(x_{i,j})=\tau^{-1}(Q_i)(t-\theta)^{s_i+\cdots+s_d}x_{i-1,j}+(t-\theta)^{s_{i+1}+\cdots+s_d}x_{i,j},\quad 0\leq j<i\leq d,$$ setting also $x_{i,i}=\Omega^{s_{i+1}+\cdots+s_d}$ for $i=1,\ldots,d$.
This system is equivalent, by setting
$$y_{i,j}=\frac{x_{i,j}}{\Omega^{s_{i+1}+\cdots+s_d}},$$
to the system of equations:
$$\tau^{-1}(y_{i,j})=\tau^{-1}(Q_i)((t-\theta)\Omega)^{s_i}y_{i-1,j}+y_{i,j},\quad  0\leq j<i\leq d,$$
and the result follows.
\end{proof}
Before Schanuel's conjecture, there was a conjectural statement by Gelfond (see appendix of Waldschmidt's paper \cite{WAL}) which looked as a very intricate statement
involving iteration of exponentials and logarithms. Thinking about this lets us 
appreciate the simplicity and the strength of Schanuel's Conjecture.
In what concerns, for example, a control of the transcendence degree 
of the field generated by the entries of a rigid analytic trivialization of a 
multiple polylogarithm motive, we are led to understand the algebraic relations 
between elements of the field $M_\infty$. It seems thus that
we are somewhat back to Gelfond's  starting point.
\subsection*{Acknowledgements}
The author is thankful to Laurent Denis and Paul Voutier for interesting and useful discussions, and Amador Martin-Pizarro for having suggested the references \cite{MAR} and \cite{SCA} and for several remarks that have helped to improve the paper.
This research started while the author was visiting the MPIM (Bonn) in February 2016 and the IHES (Bures-sur-Yvette) in April 2016. The author is thankful to both institutions for the very good environment that positively influenced the quality of this work.

\subsection{Appendix. Some ring structures related to base-$p$ digits}\label{appendix}
We begin with a commutative, unitary ring $R$. For a symbol 
$Y$, we introduce, in the $R$-module $R[Y_0,Y_1,\ldots]$ of polynomials in infinitely 
many indeterminates $Y_0,Y_1,\ldots$ with coefficients in $R$, the monomials
$$\langle Y\rangle^i:=\prod_{j=0}^\infty Y_j^{i_j},\quad i\in\NN,\quad i=i_0+i_1p+\cdots+i_rp^r,$$ where the last expression is the expansion of $i$
in base $p$, so that $0\leq i_j\leq p-1$ for all $j$ (note that the product 
is finite for any $i$).
For $i,j\in\NN$, we set $k:=i+j$ and:
$$\langle Y\rangle^i\cdot \langle Y\rangle^j:=\langle Y\rangle^k.$$
the $R$-module 
$$R\langle Y\rangle:=\left\{\sum_{i=0}^Nc_i\langle Y\rangle^i:N\in\NN,c_i\in R\right\}$$
is now equipped with the structure of an $R$-algebra,
by using the product of monomials defined above. Explicitly, we have, for two polynomials $f=\sum_if_i\langle Y\rangle^i$ and $g=\sum_jg_j\langle Y\rangle^j$,
$$f\cdot g=\sum_k\langle Y\rangle^k\sum_{i+j=k}f_ig_j$$
(note that $R\langle Y\rangle$ is a submodule but not a subring of $R[Y_0,Y_1,\ldots]$).
This means that $R\langle Y\rangle$ is isomorphic to the polynomial
algebra $R[Z]$ in one indeterminate $Z$ with coefficients in $R$.
In particular, if $R$ is a field, $R\langle Y\rangle$ is an $R$-algebra of dimension one.

Let $R$ be an $\FF_p$-algebra, together with an 
$\FF_p$-linear injective endomorphism
$R\xrightarrow{\mu}R$. We define a $\mu$-difference structure over
$R\langle Y\rangle$ by setting 
$$\mu\left(\sum_if_i\langle Y\rangle^i\right):=\sum_i\mu(f_i)\langle Y\rangle^{pi}.$$ It is easy to see that, in this way, the isomorphism $R\langle Y\rangle\cong R[Z]$
becomes a $\mu$-difference algebra isomorphism.

We assume, from now on, that $R$ is a commutative field extension of $\FF_q$.
Let $\mathcal{P}$ be the 
ideal of $R[X]_\mu$ generated by the polynomials $X^{p^k}-\mu^k(X)$,
for all $k\geq 0$. Then $\mathcal{P}$ is a $\mu$-invariant ideal.
\begin{Lemma}
The ideal $\mathcal{P}$ is a prime ideal, and the quotient 
$R[X]_\mu/\mathcal{P}$ is isomorphic, as an $R$-algebra which also is a $\mu$-difference algebra,
to the $R$-algebra $R\langle Y\rangle$.
\end{Lemma}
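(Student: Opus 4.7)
The plan is to construct an explicit $\mu$-difference $R$-algebra isomorphism
\[\phi: R[X]_\mu/\mathcal{P} \xrightarrow{\sim} R\langle Y\rangle;\]
primality of $\mathcal{P}$ then follows for free, since $R\langle Y\rangle$ is isomorphic (via $\langle Y\rangle^i\mapsto Z^i$) to the ordinary polynomial ring $R[Z]$, hence is a domain. The key observation is that the defining relations of $\mathcal{P}$ are already tautologically satisfied by $Y_0 \in R\langle Y\rangle$: from the definitions one has $\mu(Y_0) = \mu(\langle Y\rangle^1) = \langle Y\rangle^p = Y_1$, while the product rule gives $Y_0^p = \langle Y\rangle^1 \cdots \langle Y\rangle^1 = \langle Y\rangle^p = Y_1$, so $\mu(Y_0) = Y_0^p$. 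Iterating yields $\mu^k(Y_0) = Y_0^{p^k} = Y_k$ for every $k \geq 0$.

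Since $R[X]_\mu$ is freely generated as an $R$-algebra by the algebraically independent indeterminates $\mu^k(X)$, $k \geq 0$, there is a unique $R$-algebra homomorphism $\phi: R[X]_\mu \to R\langle Y\rangle$ with $\phi(\mu^k(X)) = Y_0^{p^k}$ for all $k$; the identity $\mu(Y_0^{p^k}) = Y_0^{p^{k+1}}$ then shows $\phi$ commutes with $\mu$ on generators, hence is a $\mu$-difference $R$-algebra map. The inclusion $\mathcal{P} \subseteq \ker \phi$ is immediate by construction, and surjectivity is clear because $Y_0$ generates $R\langle Y\rangle$ as an $R$-algebra ($\langle Y\rangle^i$ equals $Y_0^i$). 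The only step with real content is the reverse inclusion $\ker \phi \subseteq \mathcal{P}$: I would argue that every $P \in R[X]_\mu$ is congruent modulo $\mathcal{P}$ to a polynomial $Q(X) \in R[X]$ in the single indeterminate $X$, obtained by replacing each factor $\mu^k(X)$ in each monomial of $P$ by $X^{p^k}$. This is a finite substitution process using only the generators of $\mathcal{P}$. One then computes $\phi(P) = Q(Y_0) = \sum_i q_i \langle Y\rangle^i$, and since the $\langle Y\rangle^i$ form an $R$-basis of $R\langle Y\rangle$ by construction, the vanishing of $\phi(P)$ forces $Q = 0$, i.e.\ $P \in \mathcal{P}$.

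The main potential obstacle is to verify cleanly that the reduction $P \mapsto Q(X)$ is legitimate inside the quotient, even when a single monomial of $P$ involves several $\mu^k(X)$ simultaneously. This reduces to a straightforward induction on the largest $k$ appearing in $P$, using that $\mathcal{P}$ is an ideal (so it absorbs products) together with the already-noted $\mu$-invariance. Once this is in place, $\phi$ descends to the desired $\mu$-difference $R$-algebra isomorphism $R[X]_\mu/\mathcal{P} \cong R\langle Y\rangle$, and primality of $\mathcal{P}$ is immediate from the integral domain structure of $R[Z]$.
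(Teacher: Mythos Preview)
Your argument is correct and in fact a bit slicker than the paper's. Both proofs establish the same isomorphism, but in opposite directions. The paper defines $\phi:R[Z]\to R[X]_\mu/\mathcal{P}$ by sending $Z^i$ to the ``tame'' monomial $X^{i_0}\mu(X)^{i_1}\cdots\mu^r(X)^{i_r}$ built from the base-$p$ digits of $i$, and the bulk of the work is checking that $\phi$ is multiplicative: one must track the carries in the base-$p$ addition $i+j=k$ and use the relations $\mu^n(X)^p\equiv\mu^{n+1}(X)$ step by step. You instead go from $R[X]_\mu$ to $R\langle Y\rangle$ via the universal property of the polynomial ring, so multiplicativity is automatic; the only thing left is to show $\ker\phi=\mathcal{P}$, and your reduction of any $P$ to a polynomial in $X$ alone modulo $\mathcal{P}$ (followed by the observation that $R[X]\hookrightarrow R\langle Y\rangle$ via $X\mapsto Y_0$) handles this cleanly. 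What the paper's computation buys is an explicit description of a normal form for cosets (the tame monomials), which is the point of the appendix; your approach recovers the same normal form implicitly, since your reduction step shows every coset is represented by some $Q(X)\in R[X]$, uniquely.
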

\begin{proof}
All we need to show, is that there is an $R$-algebra isomorphism $R[Z]\xrightarrow{\phi}R[X]_\mu/\mathcal{P}$. Indeed, it follows in this way that $R[X]_\mu/\mathcal{P}$
is integral.

We construct $\phi$ in the following way. Let $i\in\NN$ be with base-$p$ expansion
$i=i_0+i_1p+\cdots+i_rp^r$, with $0\leq i_j<p$ for all $j$. We define the map
$\phi$ to be $R$-linear, with $$\phi(Z^i)=X^{i_0}\mu(X)^{i_1}\cdots\mu^r(X)^{i_r}\pmod{\mathcal{P}_X}.$$
We need to show that $\phi$ is multiplicative. 
Let $i,j,k\in\NN$ be such that $i+j=k$. We further expand $i,j,k$ in base $p$; let $r$ be an integer 
such that $i=\sum_{n=0}^ri_np^n,j=\sum_{n=0}^rj_np^n,k=\sum_{n=0}^rk_np^n$, with $0\leq i_n,j_n,k_n<p$.
We define the sequence $(b_n)_{n\geq 0}$ of integers to be that 
characterizing the carry over of the base-$p$ expansion of the sum $i+j=k$, namely,
$b_n\in\NN$ is defined by $b_{-1}:=0$ and, inductively for $n\geq 0$, by:
$$i_n+j_n+b_{n-1}=k_n+pb_n,\quad n\geq 0.$$
We have:
\begin{small}
\begin{eqnarray*}
\phi(Z^i)\phi(Z^j)&=&X^{i_0+j_0}\mu(X)^{i_1+j_1}\cdots \mu^r(X)^{i_r+j_r}\\
&\equiv& X^{k_0}\mu(X)^{i_1+j_1+b_0}\mu^2(X)^{i_2+j_2}\cdots \mu^r(X)^{i_r+j_r}\pmod{(X^p-\mu(X))}\\
&\equiv& X^{k_0}\mu(X)^{k_1}\mu^2(X)^{i_2+j_2+b_1}\cdots \mu^r(X)^{i_r+j_r}\pmod{(X^p-\mu(X),\mu(X)^p-\mu^2(X))}\\
&\equiv& X^{k_0}\mu(X)^{k_1}\mu^2(X)^{i_2+j_2+b_1}\cdots \mu^r(X)^{i_r+j_r}\pmod{(X^p-\mu(X),X^{p^2}-\mu^2(X))}\\
&\vdots & \\
&\equiv & X^{k_0}\mu(X)^{k_1}\cdots \mu^r(X)^{k_r}\pmod{\mathcal{P}_X}\\
&=&\phi(Z^k)
\end{eqnarray*}
\end{small}
so that $\phi$ is multiplicative, and it is obviously surjective and injective
(this is also easy to verify on the structure of the ring $R\langle Y\rangle$).
The verification
of the remaining properties is left to the reader.
\end{proof}

\end{document}